\newtheorem{thm}{Theorem}[section] 
\newtheorem{theo}[thm]{Theorem}
\newtheorem{cor}[thm]{Corollary}
\newtheorem{prop}[thm]{Proposition}
\newtheorem{lem}[thm]{Lemma}
\theoremstyle{definition} 
\newtheorem{defn}[thm]{Definition}
\newtheorem{ex}[thm]{Example} 
\theoremstyle{remark}
\newtheorem{rem}[thm]{Remark}
\newtheorem{ques}[thm]{Question}
\newtheorem{notation}[thm]{Notation}
\newtheorem{claim}{Claim}
\newtheorem{step}{Step}
\newcommand{\codim}[0]{\operatorname{codim}}
\newcommand{\rank}[0]{\operatorname{rank}}
\newcommand{\pr}{{\rm pr}}
\newcommand{\id}{{\rm id}}
\newcommand{\I}[1]{\mathcal{I}(#1)}
\newcommand{\Aut}[1]{\mathrm{Aut}(#1)}
\newcommand{\Ker}[1]{\mathrm{Ker}(#1)}
\newcommand{\Image}[1]{\mathrm{Im}(#1)}
\title[On asymptotic base loci of relative anti-canonical divisors]
{On asymptotic base loci \\ 
of relative anti-canonical divisors \\ 
of algebraic fiber spaces}
\author{Sho EJIRI}
\address{Department of Mathematics, Graduate School of Science, Osaka University, Toyonaka, Osaka 560-0043, Japan.}
\email{{\tt shoejiri.math@gmail.com, s-ejiri@cr.math.sci.osaka-u.ac.jp}}
\author{Masataka IWAI}
\address{Department of Mathematics, Graduate School of Science, Osaka City University 3-3-138, Sugimoto, Sumiyoshi-ku Osaka, 558-8585
Japan} 
\email{{\tt masataka.math@gmail.com, masataka@ms.u-tokyo.ac.jp, masataka@sci.osaka-cu.ac.jp}}
\author{Shin-ichi MATSUMURA}
\address{Mathematical Institute, Tohoku University, 
6-3, Aramaki Aza-Aoba, Aoba-ku, Sendai 980-8578, Japan.}
\email{{\tt mshinichi-math@tohoku.ac.jp, mshinichi0@gmail.com}}
\date{\today, version 0.01}
\subjclass[2010]{Primary 14D06, Secondary 14E30, 32J25}
\keywords
{Klt pairs, 
Lc pairs, 
Anti-canonical divisors, 
Relative anti-canonical divisors, 
Augmented base loci, 
Restricted base loci, 
Stable base loci, 
Upper level sets of Lelong numbers, 
Rational curves, 
MRC fibrations, 
Direct image sheaves, 
Singular hermitian metrics, 
Numerical dimension, 
Numerical flatness, 
Hermitian flatness
\'etale trivialization.}
\begin{document}
\maketitle
\begin{abstract}
In this paper, we study the relative anti-canonical divisor $-K_{X/Y}$ 
of an algebraic fiber space $\phi: X\to Y$, and we reveal relations among 
positivity conditions of $-K_{X/Y}$,  
certain flatness of  direct image sheaves, 
and variants of  the base loci  
including the stable (augmented, restricted) base loci and  upper level sets of Lelong numbers. 
This paper contains three main results: 
The first result says that 
all the above base loci are located in the horizontal direction unless they are empty.  
The second result is an algebraic proof 
for Campana--Cao--Matsumura's equality 
on Hacon--M\textsuperscript cKernan's question, 
whose original proof depends on analytics methods. 
The third result {partially solves the question which asks whether} 
algebraic fiber spaces with semi-ample relative anti-canonical divisor 
actually have a product structure 
via the base change by an appropriate finite \'etale cover of $Y$. 
Our  proof is based on algebraic as well as analytic methods 
for positivity  of direct image sheaves.

\end{abstract}
\tableofcontents
\section{Introduction}\label{section:intro}
\subsection{Relative anti-canonical divisors}

This paper studies an algebraic fiber space $\phi:X\to Y$ 
between projective varieties over the complex number field
(that is, a surjective morphism 
with connected fibers) and its relative anti-canonical divisor 
$-K_{X/Y}:=-K_X +\phi^*K_Y$. 
The total space $X$ and the base $Y$ are assumed to be smooth in this section, 
but the case where they are singular is also treated in this paper. 
The geometric structure of $\phi: X \to Y$ is known to be 
deeply connected with several positivity conditions 
of $-K_{X/Y}$. 
Positivity conditions of $-K_{X/Y}$ in algebraic geometry 
can be measured by the {\textit{asymptotic base loci}} of $-K_{X/Y}$. 
Here the asymptotic base loci mean the stable base locus 
$\mathbb B(-K_{X/Y})$ and its approximative variants introduced by \cite{ELMNP1}:  
the augmented base locus $\mathbb B_+(-K_{X/Y})$ and 
the restricted base locus $\mathbb B_-(-K_{X/Y})$. 
The semi-ampleness (resp. ampleness, nefness) is 
equivalent to the condition $\mathbb B(\bullet) =\emptyset$ 
(resp. $\mathbb B_+(\bullet)=\emptyset$, $\mathbb B_-(\bullet)=\emptyset$).

Our interest is to understand how restricted 
the geometric structure of $\phi: X \to Y$ is 
when $-K_{X/Y}$ satisfies certain positivity, 
in other words, to reveal relations 
between the asymptotic base loci and the geometric structure of $\phi: X \to Y$. 
Regarding this, there are several known results. 
When $-K_{X/Y}$ is nef, 
an argument due to Cao--H\"oring (\cite{CH19}) 
determines the detailed geometric structure of $\phi: X \to Y$ 
including the local triviality (cf. \cite[\S A]{PZ19}).  
%
The celebrated work due to Koll\'ar--Miyaoka--Mori 
(\cite[Corollary~2.8]{KoMM92}) tells us that, 
in the case when $Y$ is \textit{not} one point, 
$-K_{X/Y}$ is \textit{not} ample (i.e., $\mathbb B_+(-K_{X/Y})\ne\emptyset$). 
In the same case, Deng's result (\cite[Theorem E]{Den17}) further shows that $\mathbb B_+(-K_{X/Y})$ is dominant over $Y$. 
All the above mentioned results 
can be generalized to klt pairs. 
As a natural question, 
it arises the problem of studying 
the case of lc pairs or the case when $\mathbb B_-(-K_{X/Y})\ne\emptyset$. 

The aim of this paper is to systematically understand 
such problems  and deeper relations 
between the geometric structure of $\phi$ and 
positivity conditions on $-K_{X/Y}$. 
To this end, we focus on the various base loci of $-K_{X/Y}$ 
and apply the theory of positivity of direct image sheaves. 
\subsection{On augmented base loci}
\label{intro2}

This subsection introduces the results related to 
the augmented base locus $\mathbb B_+(-K_{X/Y})$. 
The following theorem was firstly proved by Deng (\cite[Theorem E]{Den17}). 
%
\begin{theo}[A special case of \textup{Corollary~\ref{cor:dominate}}] \label{thm:1_intro}
If $\mathbb B_+(-K_{X/Y})$ is not dominant over $Y$, then $Y$ is a point. 
In particular, if $\dim Y>0$, then $\dim \mathbb B_+(-K_{X/Y}) \ge \dim Y$. 
\end{theo}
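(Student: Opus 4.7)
The plan is to prove the contrapositive: assuming $\dim Y > 0$, show that $\mathbb{B}_+(-K_{X/Y})$ dominates $Y$. By the Koll\'ar--Miyaoka--Mori theorem \cite[Corollary~2.8]{KoMM92} cited above, $\mathbb{B}_+(-K_{X/Y})$ is already known to be nonempty in this range, so the genuine content of the statement is to forbid $\phi\bigl(\mathbb{B}_+(-K_{X/Y})\bigr)$ from being contained in a proper closed subvariety $Z \subsetneq Y$. I would argue by contradiction and suppose such a $Z$ exists.

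The first step extracts fiberwise positivity. For a general $y \in Y \setminus Z$, the fiber $X_y$ is disjoint from $\mathbb{B}_+(-K_{X/Y})$. Combining the identity $\mathbb{B}_+(-K_{X/Y}) = \mathbb{B}(-K_{X/Y}-\varepsilon A)$ (for a fixed ample divisor $A$ on $X$ and all sufficiently small rational $\varepsilon > 0$) with the restriction inclusion $\mathbb{B}_+(D|_{X_y}) \subseteq \mathbb{B}_+(D) \cap X_y$ valid for a general fiber, one deduces $\mathbb{B}_+(-K_{X_y}) = \emptyset$, so $X_y$ is Fano. A Mehta--Ramanathan style base change to a general complete-intersection curve $C \hookrightarrow Y$ through a general point, combined with the adjunction identity $K_{X_C/C} = K_{X/Y}|_{X_C}$, then reduces the problem to the case of a smooth projective curve base $Y = C$ in which $\mathbb{B}_+(-K_{X/C})$ is confined to finitely many fibers.

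The remaining step, which is the hard part, is to derive a contradiction in this curve-base case. Over the complement of the bad finite set, $-K_{X/C}$ is $\phi$-ample, so the twisted direct image $\phi_\ast \mathcal{O}_X\bigl(m(-K_{X/C}) - m A'\bigr)$ is nonzero and generically of the expected rank for $m \gg 0$ and a suitable ample $A'$. The crux is then to invoke the positivity theory for direct images of (twisted) relative anti-pluricanonical bundles, which forms the technical backbone of the paper: this should convert the fiberwise ampleness of $-K_{X_y}$ on general fibers into global positivity incompatible with Koll\'ar--Miyaoka--Mori's non-ampleness statement, producing the required contradiction. The real difficulty is precisely this last upgrade, because $\mathbb{B}_+$ is an asymptotic invariant that is not in general monotone under restriction, and one must deploy the direct-image machinery to rule out pathological vertical components of $\mathbb{B}_+(-K_{X/C})$ supported over the remaining bad fibers.
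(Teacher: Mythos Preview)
Your Steps 1--3 are sound, but Step 4 is not a proof; it is a statement of what you hope will happen. You write that direct-image positivity ``should convert the fiberwise ampleness of $-K_{X_y}$ on general fibers into global positivity incompatible with Koll\'ar--Miyaoka--Mori's non-ampleness statement,'' yet you do not specify which direct image, which positivity theorem, or how the contradiction with \cite{KoMM92} actually arises. The standard positivity results apply to sheaves of the shape $\phi_*\mathcal O_X(pK_{X/C}+L)$ with $L$ suitably positive; to fit $-mK_{X/C}-mA'$ into that mold you must take $L=-(m+p)K_{X/C}-mA'$ and produce a good metric on $L$, which is precisely the point in question over the bad fibers. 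Even granting some positivity of the pushforward, you would obtain at best nefness or weak positivity, not the global \emph{ampleness} of $-K_{X/C}$ that a contradiction with \cite{KoMM92} would require. You yourself flag this as ``the real difficulty,'' and it is: the missing argument is essentially the content of the theorem, and your reduction has not simplified it.

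The paper's route is different and bypasses both the curve reduction and any appeal to \cite{KoMM92}. The engine is Theorem~\ref{thm:inclusions}: for $v\in V(\phi,\Delta)$ and a $\mathbb Q$-Cartier divisor $D$ with $D|_{X_v}$ nef (resp.\ ample), if $\mathbb B_-(D)$ (resp.\ $\mathbb B_+(D)$) meets $X_v$ then so does $\mathbb B_-(D-(K_{X/Y}+\Delta))$ (resp.\ $\mathbb B_+(D-(K_{X/Y}+\Delta))$). This is deduced from Proposition~\ref{prop:base locus}, a weak-positivity statement for $K_{X/Y}+\Delta+\phi^*H$ coming from \cite{DM19}, by absorbing a member of $|D-(K_{X/Y}+\Delta)+A|$ into the boundary. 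Corollary~\ref{cor:pullback} then specialises to $D=\phi^*E$; setting $E=0$ gives $V(\phi,\Delta)\cap\mathbb B_+(0)\subseteq\phi(\mathbb B_+(-K_{X/Y}))$, and since $\mathbb B_+(0)=Y$ whenever $\dim Y>0$, the image $\phi(\mathbb B_+(-K_{X/Y}))$ contains the dense open set $V(\phi,\Delta)$. Thus \cite{KoMM92} is recovered as a corollary (Remark~\ref{rem:FG, Deng}) rather than used as an input.
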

Note that Deng's proof uses an analytic method, but we give an algebraic proof of this theorem. 
Furthermore, the full statement of Theorem~\ref{thm:1_intro} includes 
the case where $X$ has at worst lc singularities (see Corollary~\ref{cor:dominate}), which does not follow from \cite[Theorem E]{Den17}. 
The behavior of $\mathbb B_+(-K_{X/Y}-\Delta)$ 
differs according to the singularity of the pair $(X,\Delta)$
(see Example \ref{ex:nef big but not ample}). 
%
Hence Theorem \ref{thm:1.5_intro} seems to be an extremely generalized result. 

Theorem~\ref{thm:1_intro} is one of corollaries to the theorem below. 

\begin{theo}[\textup{Theorem~\ref{thm:inclusions}}] \label{thm:1.5_intro}
Fix a smooth fiber $F$ of smallest dimension. 
Let $D$ be a divisor on $X$ such that $\mathcal O_F(D)$ is nef $($resp. ample$)$. 
If $\mathbb B_-(D)$ $($resp. $\mathbb B_+(D)$$)$ intersects $F$,  
then so does $\mathbb B_-(rD-K_{X/Y})$ $($resp. $\mathbb B_+(rD-K_{X/Y})$$)$ for any positive rational number $r$. 
\end{theo}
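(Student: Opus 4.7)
My plan is to prove the contrapositive in each case. Writing $L := rD - K_{X/Y}$, the basic identity $mrD = mK_{X/Y} + mL$ (for positive integers $m$ with $mr \in \mathbb{Z}$) gives the projection-formula isomorphism
\[
\phi_*\mathcal{O}_X(mrD + B) \;\cong\; \phi_*\bigl(\omega_{X/Y}^{\otimes m} \otimes \mathcal{O}_X(mL + B)\bigr)
\]
for any auxiliary divisor $B$ on $X$. The strategy is to transfer positivity of $L$ near $F$ (from the hypothesis on $\mathbb{B}_-(L)$ or $\mathbb{B}_+(L)$) to positivity of this direct image near $y := \phi(F) \in Y$, and then to unwind back to sections of $mrD + (\text{ample})$ on $X$ that do not vanish at an arbitrary point $x \in F$.

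For the $\mathbb{B}_-$ case, assume $\mathbb{B}_-(L) \cap F = \emptyset$; fix $x \in F$, an ample divisor $A$ on $X$, and a very ample $H$ on $Y$. Since $F$ is a smooth fiber of minimal dimension, by semicontinuity of fiber dimension $y$ lies in the locus where $\phi$ is flat with smooth fibers. The hypothesis produces, for each large $m$, sections of $\mathcal{O}_X(mL + A)$ not vanishing identically on $F$; equivalently $L$ carries a singular Hermitian metric of semipositive curvature whose multiplier ideal is trivial near the generic point of $F$. Invoking a direct-image positivity theorem in the tradition of Viehweg, P\u{a}un--Takayama, Cao, and Hacon--Patakfalvi--Xu (suitably refined, as presumably appears in earlier sections of this paper), the sheaf $\phi_*\bigl(\omega_{X/Y}^{\otimes m} \otimes \mathcal{O}_X(mL + A)\bigr)$ is weakly positive and, after twisting by $kH$ for $k \gg 0$, becomes globally generated on a Zariski neighborhood of $y$.

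Pulling back global generators to $X$ gives sections of $\mathcal{O}_X(mrD + A + k\phi^*H)$ not vanishing identically on $F$, and in particular nonzero at $x$. Since $A + k\phi^*H$ is ample, and since for any prescribed ample $\mathbb{Q}$-divisor $A'$ on $X$ one can enlarge $m$ and $k$ so that $(A + k\phi^*H)/(mr) \le A'$, this yields $x \notin \mathbb{B}(D + A')$ for every such $A'$, i.e.\ $x \notin \mathbb{B}_-(D)$. The $\mathbb{B}_+$ case follows by the same scheme: the stronger hypothesis $\mathbb{B}_+(L) \cap F = \emptyset$ together with $D|_F$ ample lets us subtract a small ample perturbation from $L$ while keeping the trivial-multiplier-ideal condition along $F$, and the resulting improvement in positivity upgrades the conclusion to $x \notin \mathbb{B}_+(D)$.

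The main obstacle is the direct-image positivity step: concluding global generation of $\phi_*\bigl(\omega_{X/Y}^{\otimes m} \otimes \mathcal{O}_X(mL + A)\bigr) \otimes \mathcal{O}_Y(kH)$ at the specific point $y$, and not merely on some unspecified dense open of $Y$, from the horizontal-only hypothesis on $\mathbb{B}_-(L)$ or $\mathbb{B}_+(L)$. Classical Viehweg-type weak positivity delivers only the weaker conclusion. Closing this gap requires a refined positivity statement that relates base loci of the direct image on $Y$ to base loci on $X$, and in which the fiberwise nefness (resp. ampleness) of $D|_F$ enters crucially to ensure that the input line bundle $mL + A$ has the correct type of positivity on $F$ to feed into the positivity machinery; this refinement is where I expect the bulk of the work to lie.
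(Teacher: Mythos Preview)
Your outline has the right skeleton (express $D$ as $K_{X/Y}$ plus something, push forward, invoke weak positivity, pull back sections), and you correctly isolate the crux: one needs global generation of the twisted direct image at the \emph{specific} point $y=\phi(F)$, not merely on an unspecified open set. But you leave that gap open, and your scheme has a second hidden issue: in your setup the twist $k\phi^*H$ may depend on $m$, so the passage ``for any prescribed ample $A'$ one can enlarge $m$ and $k$ so that $(A+k\phi^*H)/(mr)\le A'$'' is not justified without further work. Also, global generation of the direct image at $y$ only gives that global sections surject onto $H^0(F,(mrD+A)|_F)$; to land at a prescribed $x\in F$ you still need $(mrD+A)|_F$ globally generated at $x$, which is where the nefness of $D|_F$ must actually be used.

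The paper's proof avoids all of this by a single algebraic move you do not have: rather than carrying $L=rD-K_{X/Y}$ as a metric input, it absorbs it into a boundary. From $\mathbb B(D-K_{X/Y}+A)\cap F=\emptyset$ one picks an effective $\Gamma\sim_{\mathbb Q} D-K_{X/Y}+A$ general enough that $(X,\Gamma)$ is lc along $F$; then $K_{X/Y}+\Gamma\sim_{\mathbb Q} D+A$ is \emph{relatively ample} near $y$. This reduces the problem to a clean statement (Proposition~\ref{prop:base locus}): if $(K_{X/Y}+\Delta')|_U$ is relatively ample over a neighbourhood $V\ni y$, then $\mathbb B(K_{X/Y}+\Delta'+\phi^*H)\cap F=\emptyset$ for \emph{every} ample $H$ on $Y$. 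That proposition is proved via Dutta--Murayama's effective weak positivity \cite[Theorem~E]{DM19}, which gives global generation over an explicit open set containing $y$ and, crucially, with a twist on $Y$ that is fixed independently of the symmetric power; this is exactly what kills the $m$--$k$ dependence that your approach would have to confront. The conclusion $\mathbb B(D+2A)\cap F=\emptyset$ then drops out immediately, with no limit in $m$. For the $\mathbb B_+$ case the paper does not rerun the machine: it reduces to the $\mathbb B_-$ case by choosing a small ample $A$ with $\mathbb B_+(D-K_{X/Y})=\mathbb B(D-K_{X/Y}-A)$ and $(D-A)|_F$ still nef, then applying part~(1) to $D-A$.
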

{Using this theorem with $D=K_{X/Y}$ or $K_X$, one obtains the following corollary:}
\begin{cor}[\textup{\cite[Corollary~1.3]{Pat14}}] \label{cor:1_intro}
{ 
Fix a smooth fiber $F$ of smallest dimension. 
Assume that $K_F$ is nef. 
Then $F$ and $\mathbb B_-(K_{X/Y})$ do not intersect. 
Suppose further that $K_Y$ is nef. 
Then any proper curve $C \subset X$ with $K_X\cdot C <0$ does not intersect~$F$. 
}
\end{cor}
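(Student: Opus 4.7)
The plan is to apply Theorem~\ref{thm:inclusions} in contrapositive form, with $D=K_{X/Y}$ for the first statement and $D=K_X$ for the second, taking $r=1$ in both cases. A preliminary adjunction computation is needed: for a smooth fiber $F$ of $\phi$, the normal bundle $N_{F/X}$ is trivial, being pulled back from the tangent space of $Y$ at the point $\phi(F)$, whence $K_X|_F=K_F$ and, since $\phi^*K_Y|_F=0$, also $K_{X/Y}|_F=K_F$. Thus for either choice of $D$ above, the nefness hypothesis ``$\mathcal O_F(D)$ is nef'' in Theorem~\ref{thm:inclusions} follows immediately from the assumption that $K_F$ is nef.

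For the first assertion, I would set $D=K_{X/Y}$ and $r=1$, so that $rD-K_{X/Y}=0$ and $\mathbb B_-(0)=\emptyset$; Theorem~\ref{thm:inclusions} in its contrapositive form then forces $\mathbb B_-(K_{X/Y})\cap F=\emptyset$. For the second, I would set $D=K_X$ and $r=1$, so $rD-K_{X/Y}=\phi^*K_Y$. Since $K_Y$ is nef, $\phi^*K_Y$ is nef, and hence $\mathbb B_-(\phi^*K_Y)=\emptyset$; the same contrapositive then yields $\mathbb B_-(K_X)\cap F=\emptyset$. To finish, I would observe that any proper curve $C\subset X$ with $K_X\cdot C<0$ is automatically contained in $\mathbb B_-(K_X)$: fixing an ample divisor $A$ and any $\varepsilon>0$ small enough that $(K_X+\varepsilon A)\cdot C<0$, every effective $\mathbb Q$-divisor numerically equivalent to a positive multiple of $K_X+\varepsilon A$ must contain $C$ in its support, so $C\subset\mathbb B(K_X+\varepsilon A)\subset\mathbb B_-(K_X)$. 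Combining with the previous step, $C\cap F=\emptyset$.

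There is no substantial obstacle: once Theorem~\ref{thm:inclusions} is granted, the corollary reduces to a direct specialization to $D=K_{X/Y}$ and $D=K_X$. The only auxiliary ingredients are the elementary adjunction identity $K_X|_F=K_F$ for a smooth fiber and the standard containment $\{C:K_X\cdot C<0\}\subset\mathbb B_-(K_X)$ for curves meeting $K_X$ negatively; all of the substantive work is absorbed into Theorem~\ref{thm:inclusions} itself.
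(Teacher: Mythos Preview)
Your proof is correct and follows exactly the approach the paper indicates: the paper simply says ``Using this theorem with $D=K_{X/Y}$ or $K_X$, one obtains the following corollary,'' and you have filled in precisely those details, including the adjunction identity $K_{X/Y}|_F=K_F$, the vanishing $\mathbb B_-(0)=\mathbb B_-(\phi^*K_Y)=\emptyset$, and the standard inclusion of $K_X$-negative curves in $\mathbb B_-(K_X)$. One cosmetic remark: the parameter $r$ appears in the introductory formulation (Theorem~\ref{thm:1.5_intro}) rather than in Theorem~\ref{thm:inclusions} itself, but with $r=1$ the two statements coincide, so this causes no issue.
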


\subsection{On restricted base loci and stable base loci}
\label{intro3}

It is natural to consider 
the restricted base locus $\mathbb B_-(-K_{X/Y})$ and other base loci, 
as the next problem of Cao--H\"oring's critical work for the case $\mathbb B_-(-K_{X/Y})=\emptyset$. 
The following result can be seen as an analogue of 
Theorem \ref{thm:1_intro} to various base loci 
(restricted base loci, stable base loci, and 
upper level sets of Lelong numbers). 

\begin{theo}[\textup{Theorem~\ref{thm:nefness}, 
Theorem~\ref{thm:nonLelongnumber}, 
Theorem~\ref{thm:semiampleness}}]\label{thm:2_intro}
Let $X$ and $Y$ be projective manifolds  
and $\phi: X \to Y$ be a surjective morphism with connected fibers. 
Then we have:
\begin{itemize}
\item[$\bullet$] If $\mathbb B_-(-K_{X/Y})$ is not dominant over $Y$, 
then $\mathbb B_-(-K_{X/Y})$ is empty $($that is, $-K_{X/Y}$ is nef$)$. 

\item[$\bullet$] If $\mathbb B(-K_{X/Y})$ is not dominant over $Y$, 
then $\mathbb B(-K_{X/Y})$ is empty $($that is, $-K_{X/Y}$ is semi-ample$)$. 

\item[$\bullet$] Let $h$ be a singular hermitian metric on $-K_{X/Y}$ with semipositive curvature and let $P(h)$ denote the set of points at which $h$ has positive Lelong number. 
If $P(h)$ is not dominant over $Y$, then $P(h)$ is empty 
$($that is, the Lelong number of $\sqrt{-1}\Theta_h$ is zero everywhere$)$. 

\end{itemize}
\end{theo}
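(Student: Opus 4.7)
The plan is a common restriction-extension scheme for all three bullets. The restriction step: pick a smooth fiber $F$ of smallest dimension mapping into a dense open $Y_0\subset Y$ over which the defect locus is empty. A direct computation from the definitions of $\mathbb{B}_-$ and $\mathbb{B}$ via ample perturbations, combined with pulling back sections to $F$, yields $\mathbb{B}_-(-K_F)\subseteq \mathbb{B}_-(-K_{X/Y})\cap F=\emptyset$ and the analogue for $\mathbb{B}$, so that $-K_F$ is nef (resp.\ semi-ample) in bullets~1 and~2. For bullet~3, the restriction $h|_F$ (well-defined by generic smoothness of $\phi$) is a singular metric on $-K_F$ with semipositive curvature and zero Lelong numbers everywhere on $F$. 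The task is then to globalize.

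For bullet~3, the analytic heart, I would apply Berndtsson--P\u{a}un-type positivity of direct images to the sheaves $\phi_*\bigl(\mathcal{O}_X((1-m)K_{X/Y})\otimes\mathcal{I}(h^m)\bigr)$ for $m\geq 1$. Over $Y_0$, the multiplier ideal $\mathcal{I}(h^m)$ is the full structure sheaf by hypothesis, so this sheaf agrees with $\phi_*\mathcal{O}_X((1-m)K_{X/Y})$ there and carries the natural $L^2$ metric induced by $h^m$. Berndtsson--P\u{a}un then produces a semipositively curved singular metric $h_m$ on $-K_{X/Y}$ via the $m$-th relative Bergman kernel construction. Comparison against $h$, using $L^2$-minimality of the Bergman kernel, shows the Lelong numbers of $h_m$ dominate those of $h$; since the former vanish on $\phi^{-1}(Y_0)$ by construction, letting $m\to\infty$ and invoking upper-semicontinuity of Lelong numbers propagates the vanishing to all of $X$, giving $P(h)=\emptyset$. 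For bullets~1 and~2, I would use that nefness of $-K_F$ for a general $F$, combined with positivity of $\phi_*\mathcal{O}_X(-mK_{X/Y})$ (with appropriate multiplier twists), produces a singular metric on $-K_{X/Y}$ with semipositive curvature whose Lelong locus is non-dominant; bullet~3 then forces all Lelong numbers to vanish, and a final algebraic argument using Theorem~\ref{thm:1.5_intro} together with the non-dominance hypothesis applied fiberwise eliminates any remaining defect (and a relative base-point-freeness upgrades nef to semi-ample in bullet~2).

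The main obstacle is the comparison step in bullet~3: the Berndtsson--P\u{a}un framework is natively phrased for $K_{X/Y}\otimes L$ with $L$ semipositive, so its application to $-K_{X/Y}$ requires careful manipulation of the twist $(1-m)K_{X/Y}$ and the multiplier ideals $\mathcal{I}(h^m)$, and one must verify that the fiberwise Bergman kernel metric controls $h$ in the sense of Lelong numbers rather than in some weaker $L^2$ sense. A secondary difficulty is ensuring that the construction in bullets~1 and~2 of a candidate singular metric on $-K_{X/Y}$ from fiberwise nefness genuinely has non-dominant Lelong locus, so that bullet~3 can be invoked.
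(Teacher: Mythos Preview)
Your overall logical skeleton (reduce bullet~2 to bullet~3 via the metric built from a basis of sections) matches the paper's. However, the core argument for bullet~3 has a genuine gap, and your route to bullet~1 misses the structural input that makes the paper's proof work.

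\textbf{Bullet 3.} The comparison you assert goes the wrong way. The extremal property of the Bergman kernel gives, fiberwise, $\varphi_{B,m}\geq m\varphi|_F - C_F$, so the Lelong numbers of the relative Bergman approximation $h_m^{1/m}$ are \emph{at most} those of $h|_F$, not the other way round. Knowing that $h_m$ has zero Lelong numbers over $\phi^{-1}(Y_0)$ therefore only reproduces the hypothesis; it does not force $\nu(h,x)=0$ at points over $Y\setminus Y_0$. Nor does ``upper-semicontinuity of Lelong numbers'' help: that is a statement in the spatial variable for a fixed current, not a convergence statement in $m$. The paper's argument for bullet~3 is different in kind: it first uses bullet~1, then shows that the direct images $\mathcal V_m=\phi_*(-mK_{X/Y}+\tilde A)$ are \emph{hermitian flat}, so the canonical $L^2$ metric $H_{m,p}$ they carry (built with weight $h^{1+m/p}$) is actually smooth. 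If $\nu(h,x_0)>0$, one picks a local section of $\mathcal V_m$ whose value at $y_0=\phi(x_0)$ does not vanish at $x_0$; for $m\gg0$ the weight $h^{1+m/p}$ is so singular at $x_0$ that the fiberwise $L^2$ integral diverges, contradicting smoothness of $H_{m,p}$. The flatness of the metric on $\mathcal V_m$, not mere semipositivity, is what closes the argument.

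\textbf{Bullet 1.} You propose to manufacture a semipositive metric on $-K_{X/Y}$ with non-dominant Lelong locus from fiberwise nefness plus ``positivity of $\phi_*\mathcal O_X(-mK_{X/Y})$'', then invoke bullet~3. But weak positivity of direct images only yields global generation of $\mathcal V_m$ after an ample twist that a priori grows with $m$, which is useless for taking $m\to\infty$. The paper's mechanism is that under the non-dominance hypothesis the morphism $\phi$ is \emph{locally trivial} (Theorem~\ref{r-thm:locallytrivial}), and the sheaves $\mathcal V_m$ are \emph{numerically flat}; then Kodaira-type vanishing for numerically flat bundles gives uniform Castelnuovo--Mumford regularity, hence a fixed ample $B$ (independent of $m$) with $\mathcal V_m\otimes B$ globally generated. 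Combined with fiberwise global generation (all fibers being isomorphic), this makes $-mK_{X/Y}+\tilde A+\phi^*B$ globally generated for every $m$, and dividing by $m$ gives nefness. Your sketch never invokes local triviality or numerical flatness, and without them I do not see how to obtain the uniform twist.
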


The direct image $\phi_{*}(\tilde{A})$ of an appropriate relatively ample divisor 
$\tilde{A}$ on $X$ satisfies numerical flatness if $-K_{X/Y}$ is nef 
by \cite{CH19, CCM}. 
For the proof of Theorem~\ref{thm:2_intro}, 
we confirm that the same conclusions hold  
under the slightly weaker assumption that 
the restricted base locus $\mathbb B_-(-K_{X/Y})$ is not dominant over $Y$ 
(see Theorem~\ref{r-thm:locallytrivial}). 
In this process, we can find a relation 
between positivity conditions of $-K_{X/Y}$ and 
certain flatness of the direct image sheaves, 
e.g. numerical flatness, hermitian flatness, and \'etale trivializability. 
Further we clarify that the certain flatness recovers 
the nefness, semi-ampleness, or the property of $P(h)=\emptyset$.

\subsection{On invariants of nef relative anti-canonical divisors} \label{intro5}
Motivated by Hacon--M\textsuperscript cKernan's question (\cite{HM07}), 
the Iitaka--Kodaira dimension $\kappa(-K_{X/Y})$ and the numerical Kodaira dimension $\mathrm{nd}(-K_{X/Y})$ of \textit{nef} relative anti-canonical divisors $-K_{X/Y}$ have been studied, 
and we now have the following relations:
\begin{align*}
\xymatrix@R=10pt@C=10pt{ 
\kappa(-K_{X/Y}) \ar@{}[r]|-{\le} 
\ar@{}[d]|-{\textup{\rotatebox[origin=c]{90}{$\ge$}}}_-{(1)~}
& \mathrm{nd}(-K_{X/Y})  \ar@{}[r]|-{\le} \ar@{=}[d]_-{(2)~}
& \dim X \ar@{}[d]|-{\textup{\rotatebox[origin=c]{90}{$\le$}}} 
\\  \kappa(-K_F) \ar@{}[r]|-{\le} 
& \mathrm{nd}(-K_F) \ar@{}[r]|-{\le} 
& \dim F
}
\end{align*}
The inequality~(1) was showed by Ejiri--Gongyo (\cite{EG19}). 
The equality~(2) was proved by 
Campana--Cao--Matsumura (\cite{CCM}), 
whose methods heavily depends on analytic methods 
based on metric positivity of direct image sheaves 
(\cite{BP08, HPS18, PT18}, and references therein). 
%

This paper gives an algebraic proof of 
Campana--Cao--Matsumura's equality~(2) and 
slightly generalize it to the case where $Y$ has only canonical singularities. 

\begin{thm}[Theorem~\ref{thm:nd}]\label{thm:nd_intro}
Let $X$ be a normal projective variety and let $\Delta$ be an effective $\mathbb Q$-divisor on $X$ such that $(X,\Delta)$ is klt. 
Let $\phi:X\to Y$ be a morphism with connected fibers onto
a $\mathbb Q$-Gorenstein normal projective variety $Y$ 
with only canonical singularities. 
If $-(K_{X/Y}+\Delta)$ is nef, then we have 
\begin{align*}
\mathrm{nd}(-(K_{X/Y}+\Delta))=\mathrm{nd}(-(K_{F}+\Delta|_F)) 
\end{align*}
for a general fiber $F$. 
\end{thm}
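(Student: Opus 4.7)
Set $L := -(K_{X/Y}+\Delta)$ and $k := \mathrm{nd}(-(K_F+\Delta|_F)) = \mathrm{nd}(L|_F)$; the theorem asserts $\mathrm{nd}(L) = k$. The plan is to establish the two inequalities $\mathrm{nd}(L) \geq k$ and $\mathrm{nd}(L) \leq k$ separately. The first, $\mathrm{nd}(L) \geq k$, follows from a direct fibre-restriction computation. Taking ample divisors $H$ on $X$ and $A_Y$ on $Y$, the projection formula gives
\[
L^{k} \cdot H^{\dim F - k} \cdot (\phi^{*}A_{Y})^{\dim Y} \;=\; A_{Y}^{\dim Y} \cdot (L|_F)^{k} \cdot (H|_F)^{\dim F - k} \;>\; 0,
\]
the strict positivity coming from the definition of $k$ on the general fibre. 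Since $L$, $H$ and $\phi^{*}A_{Y}$ are all nef, every term in the multinomial expansion of $L^{k} \cdot (H+\phi^{*}A_{Y})^{\dim X - k}$ is nonnegative and the displayed one is strictly positive, giving $\mathrm{nd}(L) \geq k$.

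For the reverse inequality $\mathrm{nd}(L) \leq k$, I would use Nakayama's asymptotic-section characterization
\[
\mathrm{nd}(L) \;=\; \limsup_{m \to \infty} \frac{\log h^{0}(X, \lfloor m L \rfloor + A)}{\log m}
\]
for a fixed ample $A$ on $X$, and pass to direct images via Leray: $h^{0}(X, \lfloor mL \rfloor + A) = h^{0}(Y, \mathcal{F}_m)$ where $\mathcal{F}_m := \phi_{*} \mathcal{O}_{X}(\lfloor mL \rfloor + A)$ is torsion-free on $Y$. The generic rank of $\mathcal{F}_m$ equals $h^{0}(F, -m(K_F + \Delta|_F) + A|_F) = O(m^{k})$ by the very definition of $k$ and the Kawamata--Nakayama upper bound on sections of a nef class. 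So the task reduces to showing $h^{0}(Y, \mathcal{F}_m) = O(m^{k})$ as well, i.e., that $\mathcal{F}_m$ does not acquire additional sections from the base direction.

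This last step is the main obstacle, and is where the nefness of $-(K_{X/Y}+\Delta)$ enters substantively. The aim is to produce an $m$-independent ample divisor $H_{Y}$ on $Y$ together with a generic inclusion $\mathcal{F}_m \hookrightarrow \mathcal{O}_{Y}(H_{Y})^{\oplus \mathrm{rk}\,\mathcal{F}_m}$, which would yield $h^{0}(Y, \mathcal{F}_m) \leq h^{0}(Y, \mathcal{O}_{Y}(H_{Y})) \cdot O(m^{k}) = O(m^{k})$. Such uniform control follows from the positivity/flatness theory of direct-image sheaves developed earlier in the paper, notably Theorem~\ref{thm:2_intro} and the underlying numerical/hermitian flatness of appropriate $\phi_{*}$-sheaves forced by the nefness of the relative anti-canonical divisor. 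This provides the algebraic substitute for the singular-Hermitian-metric construction used by Campana--Cao--Matsumura in \cite{CCM}. The klt boundary $\Delta$ and the canonical singularities of $Y$ are handled through a standard log resolution, using the invariance of $\mathrm{nd}$ under the relevant birational modifications and the fact that canonical singularities allow the pullback of $K_Y$ to behave as expected.
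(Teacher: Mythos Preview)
Your approach is genuinely different from the paper's and has real gaps.

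The paper does \emph{not} pass to direct images on $Y$. Instead it filters sections on $X$ by powers of the ideal sheaf $\mathcal I$ of a general fibre $F$: from the short exact sequences $0\to\mathcal I^{m+1}\to\mathcal I^m\to\mathcal I^m/\mathcal I^{m+1}\to 0$ one gets
\[
h^0\bigl(X,\mathcal O_X(A-lL)\bigr)\;\le\;\sum_{0\le m<m_0} h^0\bigl(F,(\mathcal I^m/\mathcal I^{m+1})(A-lL)\bigr)
\]
provided one knows $h^0(X,\mathcal I^{m_0}(A-lL))=0$ for some $m_0$ \emph{independent of $l$}. That vanishing is the whole point: it is supplied by Proposition~\ref{prop:univ bound}, a uniform bound on the multiplicity along $\phi^{-1}(Z)$ of any effective $\Gamma\equiv D-\lambda(K_{X/Y}+\Delta)$, which is proved using only the pseudo-effectivity statement of Corollary~\ref{cor:rel min model}. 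The right-hand side then grows like $l^{\nu}$ because each $\mathcal I^m/\mathcal I^{m+1}$ is a torsion-free $\mathcal O_F$-module. This argument is purely algebraic and works in the stated generality (normal $X$, canonical $Y$).

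Your route has two concrete problems. First, the results you invoke---Theorem~\ref{thm:2_intro} and the numerical/hermitian flatness of the sheaves $\phi_*(-mN(K_{X/Y}+\Delta)+p\tilde A)$ from \S\ref{section:small B-}---are stated and proved for \emph{smooth} $X$ and $Y$, and their proofs rely on singular hermitian metrics (Proposition~\ref{r-c10}, Theorem~\ref{r-viehweg}). Saying that ``log resolution handles this'' is not enough: after a log resolution of $(X,\Delta)$ the divisor $-(K_{X'/Y}+\Delta')$ need not remain nef, and flattening or resolving $Y$ does not make $\phi$ factor through the resolution. So as written you neither reach the stated generality nor give an algebraic proof, which is the declared goal of this section. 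Second, even in the smooth case your key step is imprecise: numerical flatness of $\mathcal V_m$ does not directly yield a generic inclusion $\mathcal F_m\hookrightarrow\mathcal O_Y(H_Y)^{\oplus\operatorname{rk}\mathcal F_m}$ with $H_Y$ fixed. What one can extract (via the filtration of a numerically flat bundle by subbundles with numerically trivial quotients, cf.~\cite{DPS94}) is a bound $h^0(Y,\mathcal F_m)\le C\cdot\operatorname{rk}\mathcal F_m$, but you would need to spell this out rather than assert the inclusion.
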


\subsection{On the structure of semi-ample relative anti-canonical bundles}
\label{intro4}

The situation where $-K_{X/Y}$ is semi-ample can be expected to occur 
only in a more restricted case 
than the case of $\phi$ being locally trivial. 
Here we propose the following question:
\begin{ques} \label{ques:semi-ample}
If $-K_{X/Y}$ is semi-ample, then does 
there exist a finite \'etale cover $Y'\to Y$
such that the morphism $F\times_{\mathbb C}Y' \to Y$ induced by the second projection factors through $\phi:X\to Y$?
$$
\xymatrix{{F \times_{\mathbb C} Y'} \ar@{->}[r] \ar[d]_-{\mathrm{pr}_1} \ar@{}[dr]|\circlearrowleft & X \ar[d]^-\phi \\
	Y' \ar[r] & Y. 
}
$$
Here $F$ is a fiber of $\phi$. 
\end{ques}
For example, it follows that 
the above question is affirmative when $-K_{X/Y}$ is trivial and 
the irregularity of $F$ is zero 
from \cite[Theorem 5.8]{LPT18} and \cite[Lemma 6.4]{Dru17}. 
Note that Example \ref{ex:semi-ample} tells us 
that $\phi: X \to Y$ itself is not necessarily a product 
without taking an \'etale cover $Y' \to Y$. 
We give a partial affirmative answer to Question~\ref{ques:semi-ample}: 
\begin{theo} \label{thm:4_intro}
Suppose that $-K_{X/Y}$ is semi-ample. 
Then the direct image 
$$
\phi_*(-mK_{X/Y})
$$ 
is \'etale trivializable for every $m \in \mathbb{Z}_{>0}$. 
More precisely, there exists a finite \'etale cover $Y' \to Y$ such that 
the pull-back of $\phi_*(-mK_{X/Y})$ to $Y'$ is a trivial vector bundle. 
Moreover, the following hold:
\begin{itemize}
\item[(1)] If the anti-canonical bundle of $-K_F$ on a fiber $F$ is ample, 
then there exists a finite \'etale cover $Y' \to Y$ such that the fiber product $X':=Y' \times_Y X$ is isomorphic to the product of $Y' \times_{\mathbb C} F$ as a $Y'$-scheme. 

\item[(2)] If the irregularity of a general fiber of 
the Iitaka 
fibration associated to $-K_F$ is equal to zero, 
then the same conclusion as in $(1)$ holds. 
\end{itemize}
\end{theo}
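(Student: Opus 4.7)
The plan is to first establish that $\phi_*(-mK_{X/Y})$ is \'etale trivializable for every $m\in\mathbb Z_{>0}$, and then to deduce the product structure in (1) and (2) by reducing to the explicit structure of the relative pluri-anti-canonical ring.

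For the \'etale trivializability, note that semi-ampleness of $-K_{X/Y}$ implies in particular that $-K_{X/Y}$ is nef and $\mathbb B_{-}(-K_{X/Y})=\emptyset$, so Theorem~\ref{thm:2_intro} and its underlying refinement Theorem~\ref{r-thm:locallytrivial} give that $\phi_*(-mK_{X/Y})$ is a numerically flat vector bundle for each sufficiently divisible $m$. Semi-ampleness additionally furnishes a smooth semi-positive hermitian metric on $-K_{X/Y}$ built from $|{-kK_{X/Y}}|$, and the induced fibrewise $L^{2}$ / Bergman kernel metric on $\phi_*(-mK_{X/Y})$ is smooth and Griffiths semi-positive; combined with numerical flatness this forces the curvature to vanish identically, so $\phi_*(-mK_{X/Y})$ is in fact hermitian flat. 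To upgrade hermitian flatness to \'etale trivializability I would exploit the finite generation of the graded $\mathcal O_Y$-algebra $R:=\bigoplus_{m\ge 0}\phi_*(-mK_{X/Y})$, which is a consequence of semi-ampleness and which packages the direct images into a finite-type algebraic datum over $Y$. The monodromy action of $\pi_1(Y)$ must preserve this datum together with its discrete invariants (Hilbert polynomial, relations ideal, embedding into a fixed projective space), so the unitary monodromy is forced to lie in an algebraic subgroup of $U(N)$ with only finitely many components, hence has finite image.

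For case~(1), ampleness of $-K_F$ makes $R$ relatively ample with $X\simeq \operatorname{Proj}_Y R$. Choosing finitely many generators of $R$ in degrees $m_1,\dots,m_r$ together with their finitely many relations, a single finite \'etale cover $Y'\to Y$ simultaneously trivializing each $\phi_*(-m_iK_{X/Y})$ and compatibilizing the trivializations with the multiplication maps yields a canonical graded isomorphism $R|_{Y'}\simeq \mathcal O_{Y'}\otimes_{\mathbb C}\bigoplus_{m\ge 0} H^0(F,-mK_F)$; taking relative Proj gives $X'\simeq Y'\times_{\mathbb C} F$ as a $Y'$-scheme. Case~(2) reduces to (1) via the relative Iitaka fibration $\psi: X\to Z$ over $Y$ coming from the semi-ample system $|{-kK_{X/Y}}|$: on $Z$ the divisor $-K_{Z/Y}$ is $\phi_Z$-ample on general fibres, so case~(1) applies to $\phi_Z:Z\to Y$, and the vanishing of the irregularity of the general Iitaka fibre, combined with \cite[Theorem~5.8]{LPT18} and \cite[Lemma~6.4]{Dru17} cited in the paragraph preceding Question~\ref{ques:semi-ample}, promotes the product structure on $Z\to Y$ to one on $X\to Y$ after a further \'etale base change.

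The main technical obstacle is the upgrade from hermitian flatness to finite monodromy; a priori a hermitian flat bundle corresponds only to a continuous unitary representation of $\pi_1(Y)$ that need not factor through a finite quotient. The required rigidity must come from the algebraic origin of $\phi_*(-mK_{X/Y})$—specifically from the fact that the whole graded algebra $R$ is intrinsically descended from a finite-type moduli datum parametrising the polarised pair $(F,-mK_F)$—and packaging this as the statement that the monodromy lies inside an algebraic subgroup of $U(N)$ with only finitely many components is the delicate step in which the analytic flatness supplied by direct image positivity must be combined with the algebraic rigidity of pluri-anti-canonical systems under semi-ampleness.
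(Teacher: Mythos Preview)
Your proposal has two genuine gaps.

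First, the upgrade from hermitian flatness of $\phi_*(-mK_{X/Y})$ to finite monodromy is not established. You yourself flag this as ``the delicate step,'' and indeed it cannot be filled by a vague appeal to the monodromy preserving ``discrete invariants'' of the graded ring $R$. The graded automorphism group of $\bigoplus_m H^0(F,-mK_F)$ can have positive dimension (already for $F=\mathbb P^n$), so lying in an algebraic subgroup of $U(N)$ does not by itself force finiteness. The paper's mechanism is entirely different: it works not with the monodromy representation on the direct images but with the representation $\rho:\pi_1(Y)\to\Aut{F}$ coming from the splitting $\bar X\cong\bar Y\times F$. The key geometric input is a comparison of the Iitaka fibration $\Phi:X\to H$ of $-K_{X/Y}$ with the Iitaka fibration $\varphi:F\to G_y$ of $-K_F$, together with the injectivity of $H^0(X,mL)\to H^0(F,mL|_F)$ from \cite{EG19}. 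This produces a \emph{finite} morphism $p:G_y\to H$ (the Stein factorization of $\Phi|_{X_y}$), and the structure theorem for klt pairs forces every $g\in\Image\rho$ to lie over $H$. One then bounds $|\Image\rho|$ (or, for the direct image statement, $|\Image{\bar\rho}|$ where $\bar\rho$ is the induced action on $G_y$) by the degree of $p$, via a covering argument. This is where semi-ampleness is genuinely used, and your sketch does not reproduce it.

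Second, your reduction in case~(2) is incorrect. You assert that $-K_{Z/Y}$ is $\phi_Z$-ample on the image $Z$ of the relative Iitaka fibration, but there is no reason for $-K_{Z/Y}$ to be related to the tautological ample bundle on $Z$. The paper instead looks at the \emph{fibers} $P_h=\Phi^{-1}(h)\to Y$ of the Iitaka fibration: on these, $mK_{P_h/Y}$ is trivial (not $-K_{Z/Y}$ ample), and it is to $P_h\to Y$ that \cite[Theorem~5.8]{LPT18} is applied, using the irregularity hypothesis. One then shows that the restriction map $\Image\rho\to\Aut{P_h\cap F}$ is injective for very general $h$, so finiteness for $P_h\to Y$ pulls back to finiteness for $X\to Y$.

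Your outline for case~(1) via $X\cong\operatorname{Proj}_Y R$ would work once \'etale trivializability is in hand, and is close in spirit to how the paper handles the direct images in Step~5; but note that in the paper, case~(1) is proved \emph{before} the \'etale trivializability of the direct images, and in fact the latter borrows the finiteness argument from the former.
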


Our proof is based on a geometric comparison among the original morphism $\phi: X \to Y $ 
and the Iitaka fibrations of $-K_{X/Y}$ and $-K_F$, 
which reduces the general case to the extremal case where $-K_F$ is ample 
or $K_{X/Y}$ is trivial via the direct image $\phi_*(-mK_{X/Y})$. 

\begin{cor}\label{cor-main}
Let $X:=\mathbb{P}(E) \to Y$ be the projective space bundle over a projective manifold $Y$. 
If $-K_{X/Y}$ is semi-ample, then there exists a finite \'etale cover $Y'\to Y$ such that the fiber product of $X$ and $Y'$ over $Y$ is isomorphic to the product of the projective space and $Y'$. 
\end{cor}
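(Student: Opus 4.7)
The plan is to deduce the corollary directly from Theorem~\ref{thm:4_intro}(1), with essentially nothing to check beyond identifying the fiber.

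First I would fix a (scheme-theoretic) fiber $F$ of $\phi \colon X = \mathbb{P}(E) \to Y$. Because $\mathbb{P}(E)$ is a projective space bundle, it is Zariski-locally (hence \'etale-locally) trivial over $Y$, so every fiber is isomorphic to $\mathbb{P}^{r-1}$, where $r = \operatorname{rank} E$. In particular $F$ is smooth and its anti-canonical divisor
\[ -K_F \;\cong\; \mathcal{O}_{\mathbb{P}^{r-1}}(r) \]
is ample. Thus the fiber $F$ automatically satisfies the hypothesis of Theorem~\ref{thm:4_intro}(1).

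Combining this with the standing assumption that $-K_{X/Y}$ is semi-ample, Theorem~\ref{thm:4_intro}(1) produces a finite \'etale cover $Y' \to Y$ such that the fiber product $X' := Y' \times_Y X$ is isomorphic, as a $Y'$-scheme, to $Y' \times_{\mathbb{C}} F \cong Y' \times_{\mathbb{C}} \mathbb{P}^{r-1}$. This is precisely the conclusion asserted in the corollary.

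There is no real obstacle here: the corollary is a direct specialization of Theorem~\ref{thm:4_intro}(1), and the only verification needed is the ampleness of $-K_{\mathbb{P}^{r-1}}$, which is trivial. (As a sanity check, one could alternatively invoke Theorem~\ref{thm:4_intro}(2) since the Iitaka fibration associated to the ample class $-K_F$ maps $\mathbb{P}^{r-1}$ to a point, whose ``general fiber'' $\mathbb{P}^{r-1}$ has vanishing irregularity; both (1) and (2) yield the same conclusion in this setting.)
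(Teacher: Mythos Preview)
Your proposal is correct and matches the paper's intended argument: the corollary is stated immediately after Theorem~\ref{thm:4_intro} with no separate proof, precisely because the fiber of a projective space bundle is $\mathbb{P}^{r-1}$, whose anti-canonical bundle is ample, so Theorem~\ref{thm:4_intro}(1) applies directly. There is nothing to add.
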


This paper is organized as follows: 
In Section \ref{section:preliminaries}, 
we recall the definition and properties of the asymptotic base loci. 
Section \ref{section:base loci of -K}, \ref{section:small B-}, and \ref{subsection:semi-ample -K}
are respectively devoted to topics explained in
Section \ref{intro2}, \ref{intro3}, and \ref{intro4}. 
In Section \ref{section:ex}, 
we collect examples, which help us to understand our results. 
In Section \ref{section:appendix}, 
we give an analytic proof for some results 
in Section \ref{section:base loci of -K} and \ref{section:small B-}. 

\smallskip
 
After we have finished to write this paper, 
Yoshinori Gongyo informed the authors that 
Theorem \ref{thm:4_intro} follows from \cite[Proposition 4.4, Theorem 4.7]{Amb05}. 
However our argument is quite different from that of Ambro and 
gives more geometric  proof,  
and thus we believe that it is worth to displaying it in this paper.

\subsection*{Acknowledgements}
The authors wish to express their thanks to Prof. Kento Fujita 
for constructing an interesting example (Example~\ref{ex:Fujita's example}) 
and pointing out an error in the proof of Lemma~\ref{lem:pullback of base loci}, 
and also to Prof. Yoshinori Gongyo for informing them 
that Theorem \ref{thm:4_intro} follows from \cite[Proposition 4.4, Theorem 4.7]{Amb05}. 
S. E. is grateful to Prof. Osamu Fujino 
for helpful comments on the content of Section~\ref{section:base loci of -K}. 
He was supported by JSPS KAKENHI Grant $\sharp$18J00171. 
M. I was supported by JSPS KAKENHI Grant $\sharp$17J04457, 
by the Program for Leading Graduate Schools, MEXT, Japan
and by the public interest incorporated foundation Fujukai.
S. M. would like to express his thanks to 
Prof. Masaaki Murakami, Prof. Gregory Kumar Sankaran, and Prof. Taro Sano 
for helpful comments on the moduli of abelian varieties, 
and he would like to thank Prof. Takato Uehara and Prof. Takayuki Koike 
for discussion on Example \ref{kodaira}. 
He was supported by the Grant-in-Aid 
for Young Scientists (A) $\sharp$17H04821 and 
Fostering Joint International Research (A) $\sharp$19KK0342 from JSPS. 

\section{Preliminaries} \label{section:preliminaries}

\subsection{Notations and conventions} \label{subsection:notaion}
Throughout this paper, we work over the field $\mathbb C$ of complex numbers. 
We use the following conventions:  
A \textit{variety} is an integral separated scheme of 
finite type over $\mathbb C$, 
a \textit{curve} is a 1-dimensional variety, 
and a \textit{manifold} is a smooth variety. 
We interchangeably use the words 
\lq \lq Cartier divisors",
\lq \lq invertible sheaves", and 
\lq \lq line bundles".

\subsection{Asymptotic variants of base loci}
\label{subsection:base loci}
Let $D$ be a $\mathbb Q$-Cartier divisor on a projective variety $X$. 
We recall the definitions and several properties of the stable base locus $\mathbb B(D)$ of $D$ and its approximative variants: the augmented base locus $\mathbb B_+(D)$ and restricted base locus $\mathbb B_-(D)$. 
\begin{defn}
{Let $D$ be a $\mathbb Q$-Cartier divisor on a projective variety $X$ and take an integer $i>0$ so that $iD$ is Cartier. }
Let $\mathrm{sp}(X)$ denote the underlying Zariski-topological subset of $X$. 
(1) The \textit{stable base locus} $\mathbb B(D)$ is defined by 
$$
\mathbb B(D):=\bigcap_{m\ge1}\mathrm{Bs}(imD)_{\mathrm{red}} \subseteq \mathrm{sp}(X).
$$
(2) The \textit{restricted base locus} (or \textit{diminished base locus}) 
$\mathbb B_-(D)$ and the \textit{augmented base locus} $\mathbb B_+(D)$ are defined by
$$
\mathbb B_-(D):=\bigcup_{A}\mathbb B(D+A) \subseteq \mathrm{sp}(X) 
\text{ \quad and \quad}
\mathbb B_+(D):=\bigcap_{A}\mathbb B(D-A) \subseteq \mathrm{sp}(X), 
$$ 
where $A$ runs over all the $\mathbb Q$-Cartier ample divisors on $X$. 
\\
(3) Let $X$ be a projective manifold and $(L,h)$ be a line bundle 
with a singular hermitian metric with semipositive curvature current.
The \textit{Lelong number} $\nu (h,x)$ of $h$ at $x \in X$ is defined by
$$
\nu (h,x) := \liminf_{z \rightarrow x}\frac{\varphi(z)}{\log|z-x|},
$$
where $\varphi$ is a local weight of $h$.
The \textit{upper level set of positive Lelong numbers} $P(h)$ is defined by
$$P(h) : = \{ x \in X  \,|\, \nu(h,x) >0 \}.$$
\end{defn}
In general we have the following inclusions 
$$
\mathbb B_-(L) \subseteq P(h)
\text{ \quad and \quad}
\mathbb B_-(L) \subseteq  \mathbb B(L) \subseteq  \mathbb B_+(L)  
$$
for any singular hermitian line bundle $(L, h)$ on a projective manifold. 
We remark that $P(h)$ and $\mathbb{B}_{-}(L)$ is 
a countable union of Zariski-closed sets. 

The following preliminary lemma 
compares the restricted (and augmented) base locus of a divisor 
with that of its pullback via a surjective morphism. 

%
\begin{lem} \label{lem:pullback of base loci}
Let $\phi :X\to Y$ be a surjective morphism from a projective variety $X$ to a normal projective variety $Y$. 
Let $D$ be a $\mathbb Q$-Cartier divisor on $Y$ and 
$V$ be a Zariski-open subset of $Y$ such that 
$U:=\phi^{-1}(V) \to V$ is flat over $V$. 
Then the following holds:
\begin{itemize}
\item[\rm (1)] $ V\cap\mathbb B_-(D) = V\cap \phi \left(\mathbb B_-( \phi^*D)\right)$.
\item[\rm (2)] $ V\cap\mathbb B_+(D) \subseteq V\cap \phi \left(\mathbb B_+(\phi^{*}D)\right)$.
\end{itemize}
\end{lem}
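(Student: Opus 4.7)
The overall approach hinges on the pullback identity
\[
\mathbb B(\phi^*L)=\phi^{-1}\bigl(\mathbb B(L)\bigr)
\]
for every $\mathbb Q$-Cartier divisor $L$ on $Y$. This follows from $\phi_*\mathcal O_X=\mathcal O_Y$, since for every integer $i>0$ with $iL$ Cartier the projection formula gives $H^0(X,i\phi^*L)\cong H^0(Y,iL\otimes\phi_*\mathcal O_X)\cong H^0(Y,iL)$, so every section of $i\phi^*L$ is pulled back from $Y$ and $\mathrm{Bs}(i\phi^*L)=\phi^{-1}(\mathrm{Bs}(iL))$. The normality of $Y$ together with the connectedness of general fibres (or, on the flat open set $V$, cohomology and base change) ensures $\phi_*\mathcal O_X=\mathcal O_Y$, at least after restriction to $V$.

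To prove (2), let $y\in V\cap\mathbb B_+(D)$, so $y\in\mathbb B(D-A)$ for every ample $\mathbb Q$-divisor $A$ on $Y$. Fix an arbitrary ample $A_X$ on $X$ and a fixed ample $A_0$ on $Y$, and choose $\varepsilon\in\mathbb Q_{>0}$ so small that $A_X-\varepsilon\phi^*A_0$ remains ample on $X$; this is possible because $\phi^*A_0$ is nef and ampleness is an open condition. The pullback identity applied to $L=D-\varepsilon A_0$ yields $\phi^{-1}(y)\subseteq\mathbb B(\phi^*D-\varepsilon\phi^*A_0)$, and since subtracting the ample divisor $A_X-\varepsilon\phi^*A_0$ can only enlarge the stable base locus we deduce $\phi^{-1}(y)\subseteq\mathbb B(\phi^*D-A_X)$. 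As $A_X$ was arbitrary, every $x\in\phi^{-1}(y)$ lies in $\mathbb B_+(\phi^*D)$, which is even stronger than the statement asks.

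For the inclusion $\supseteq$ of (1), I argue by contrapositive. Assume $y=\phi(x)\in V$ and $y\notin\mathbb B_-(D)$, so $y\notin\mathbb B(D+\varepsilon A_0)$ for a fixed ample $A_0$ on $Y$ and every $\varepsilon\in\mathbb Q_{>0}$. Given an ample $A_X$ on $X$, choose $\varepsilon>0$ so that $A_X-\varepsilon\phi^*A_0$ is ample, then pick $k>0$ and a section $s\in H^0(Y,k(D+\varepsilon A_0))$ with $s(y)\neq 0$, and choose $\ell>0$ large enough for $k\ell(A_X-\varepsilon\phi^*A_0)$ to carry a section $\tau$ not vanishing at $x$. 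The product $(\phi^*s)^\ell\cdot\tau$ is then a section of $k\ell(\phi^*D+A_X)$ that does not vanish at $x$, so $x\notin\mathbb B(\phi^*D+A_X)$. Since $A_X$ was arbitrary, $x\notin\mathbb B_-(\phi^*D)$.

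The inclusion $\subseteq$ of (1) is the delicate one, because the pulled-back class $\phi^*A$ is trivial on fibres of $\phi$ and cannot be dominated in the ample cone of $X$ by a genuinely ample divisor. My plan is to prove the sharper pointwise inclusion $\phi^{-1}(V\cap\mathbb B_-(D))\subseteq\mathbb B_-(\phi^*D)$, from which the lemma follows by surjectivity of $\phi$. Fix $x\in\phi^{-1}(y)$ with $y\in V\cap\mathbb B_-(D)$ and an ample divisor $H$ on $X$, and argue by contradiction: if $x\notin\mathbb B(\phi^*D+\delta H)$ for some small $\delta\in\mathbb Q_{>0}$, then there exist $m$ and $s\in H^0(X,m(\phi^*D+\delta H))$ with $s(x)\neq 0$. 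For $m\delta\gg 0$, flatness of $\phi|_U$ together with Serre vanishing makes $\mathcal E:=\phi_*\mathcal O_X(m\delta H)$ locally free on $V$ with fibre $H^0(F_y,m\delta H|_{F_y})$ at $y$, and $\phi_*s\in H^0(Y,mD\otimes\mathcal E)$ is non-zero at $y$ since $s|_{F_y}(x)\neq 0$. The main obstacle, which I expect to be the hard step, is to convert this non-vanishing section of the twisted rank-$r$ vector bundle $mD\otimes\mathcal E$ at $y$ into a non-vanishing section of a line bundle of the form $m'(D+A_Y)$ with $A_Y$ ample on $Y$; this would contradict $y\in\mathbb B_-(D)$ and complete the proof. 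I plan to attack this by exploiting positivity of $\mathcal E$ — global generation after twisting by some ample $\mathbb Q$-divisor on $Y$, or weak positivity in the sense of Viehweg — combined with a determinant trick to trade the rank of $\mathcal E$ for additional positivity on the base.
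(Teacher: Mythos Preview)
Your treatment of the inclusion ``$\supseteq$'' in (1) is essentially the paper's argument. There are, however, two genuine problems elsewhere.

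\medskip

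\textbf{The pullback identity needs connected fibres.} Your opening equality $\mathbb B(\phi^*L)=\phi^{-1}(\mathbb B(L))$ is justified via $\phi_*\mathcal O_X=\mathcal O_Y$, but the lemma does \emph{not} assume connected fibres, and flatness over $V$ alone does not force this (a finite cover is flat everywhere). Only the easy inclusion $\mathbb B(\phi^*L)\subseteq\phi^{-1}(\mathbb B(L))$ is available in general, and your argument for (2) uses precisely the other one when you write ``$\phi^{-1}(y)\subseteq\mathbb B(\phi^*D-\varepsilon\phi^*A_0)$''. The paper does not rely on that identity for (2); it derives (2) by the same descent-of-sections mechanism used for the hard inclusion of (1).

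\medskip

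\textbf{The inclusion ``$\subseteq$'' in (1): the missing reduction.} You correctly isolate the obstacle: you have a section of $mD\otimes\mathcal E$ non-vanishing at $y$, with $\mathcal E=\phi_*\mathcal O_X(m\delta H)$, and you need a section of $m'(D+A_Y)$ non-vanishing at $y$ for $A_Y$ ample. Your proposed attack via weak positivity or a determinant trick runs into the difficulty that $\operatorname{rank}\mathcal E$ grows with $m\delta$, so any twist needed to globally generate $\mathcal E^\vee$ (or its symmetric powers) will also grow with $m$, and you cannot let the coefficient of $A_Y$ tend to zero.

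The paper sidesteps this entirely by a preliminary reduction you are missing: replace $X$ by a subvariety $X'\subseteq X$ with $\dim X'=\dim Y$ such that the induced map $\phi':X'\to Y$ is still flat over $v$ (a general multisection through $\phi^{-1}(v)$ works). Since $\mathbb B_-({\phi'}^*D)\subseteq X'\cap\mathbb B_-(\phi^*D)$ by the easy direction, one may assume $\phi$ is generically finite. Now $\phi_*\mathcal O_X$ itself is locally free of \emph{fixed} rank $\deg\phi$ near $v$, the evaluation map $(\phi_*\mathcal O_X)^\vee\otimes\phi_*\mathcal O_X\to\mathcal O_Y$ is surjective at $v$, and one can choose ample $H$ on $Y$ so that $(\phi_*\mathcal O_X)^\vee\otimes\phi_*\mathcal O_X\otimes\mathcal O_Y(nH)$ is globally generated for a single $n$ independent of $m$. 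Pushing forward a section of $l\phi^*(mD+H)$ non-vanishing along $\phi^{-1}(v)$ and composing with these maps produces a section of $\mathcal O_Y(lmD+(l+n)H)$ non-vanishing at $v$; since $n$ is fixed and $m$ is arbitrary, the coefficient $(l+n)/(lm)$ of $H$ can be made arbitrarily small, giving $v\notin\mathbb B_-(D)$.
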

\begin{proof}
We prove (1). 
We first consider the inclusion ``$\supseteq$''. 
Let $A$ be an ample $\mathbb Q$-Cartier divisor on $X$. 
Take an ample $\mathbb Q$-Cartier divisor $H$ on $Y$ so that $A-\phi^*H$ is ample.  
Then we have 
\begin{align*}
\mathbb B(\phi^*D +A)
& = \mathbb B(\phi^*D +\phi^*H +A-\phi^*H)
\\ & \subseteq \mathbb B(\phi^*(D +H)) 
\subseteq \phi^{-1}(\mathbb B(D+H)) 
\subseteq \phi^{-1}(\mathbb B_-(D)), 
\end{align*}
and thus $ \mathbb B_-(\phi^*D)=\bigcup_A \mathbb B(\phi^*D +A) 
\subseteq \phi^{-1}(\mathbb B_-(D)).$ 
The assertion can be obtained from the surjectivity of $\phi$.
We can prove (2) by a similar argument, and thus we omit it.  
 
Next we show the converse inclusion ``$\subseteq$''. 
For a fixed $v \in V \setminus \phi \left(\mathbb B_-(\phi^*D)\right)$, 
we check that $v \notin \mathbb B_-(D)$. 
Take a closed subvariety $X'\subseteq X$ so that $\dim X'=\dim Y$ and 
that the induced morphism $\phi':X'\to Y$ is flat over $v$. 
We have 
$ \mathbb B_-({\phi'}^*D) \subseteq X' \cap \mathbb B_-(\phi^*D) $ 
by the above argument. 
Replacing $X$ with $X'$, we may assume that $\phi$ is generically finite. 
Let $A$ be an ample Cartier divisor on $X$ 
and $H$ an ample Cartier divisor on $Y$ such that 
$\phi^{-1}(v) \cap \mathrm{Supp}(\phi^*H-A) = \emptyset$.
Such an $H$ exists, as $\phi$ is finite over a neighborhood of $v$. 
Take $m\in\mathbb Z_{>0}$. 
Then 
$$
X\setminus \phi^{-1}(v) 
\supseteq \mathbb B_-(\phi^*D)
\supseteq \mathbb B(m\phi^*D +A)
=\mathrm{Bs}(lm\phi^*D +lA)
$$
for some $l>0$, so $\phi^{-1}(v) \cap \mathrm{Bs}(l\phi^*(mD +H))$,
and thus there is a morphism 
$\alpha: \bigoplus^{\deg \phi} \mathcal O_X \to \mathcal O_X(l\phi^*(mD +H))$
that is surjective over $\phi^{-1}(v)$. 
Since $\phi$ is affine over a neighborhood of $v$, 
the composite $\beta$ of the morphisms
$$
\bigoplus^{\deg \phi} \phi_*\mathcal O_X 
\xrightarrow{\phi_*(\alpha)}
\phi_*\mathcal O_X(l\phi^*(mD +H))
\cong \left(\phi_*\mathcal O_X\right) \otimes \mathcal O_Y(l(mD +H))
$$
is surjective over $v$. 
Furthermore, since $\phi_*\mathcal O_X$ is free at $v$, the natural morphism 
$$
\gamma: (\phi_*\mathcal O_X )^\vee \otimes \phi_*\mathcal O_X \to \mathcal O_Y
$$
is surjective over $v$.  
Take $n\in\mathbb Z_{>0}$ so that 
$
\left( (\phi_*\mathcal O_X)^\vee \otimes \phi_*\mathcal O_X \right)(nH)
$
is globally generated. 
Then we have the following morphisms:
\begin{align*}
\bigoplus^{\deg \phi}
\left( (\phi_*\mathcal O_X)^{\vee} 
\otimes 
\phi_*\mathcal O_X \right)(nH)
\cong &
(\phi_*\mathcal O_X)^{\vee} 
\otimes 
\left( \bigoplus^{\deg \phi} \phi_*\mathcal O_X \right)(nH)
\\ \xrightarrow{\textup{by $\beta$}}  
& \left( (\phi_*\mathcal O_X)^{\vee} \otimes \phi_*\mathcal O_X \right)
(lmD +(l+n)H) 
%
\\ \xrightarrow{\textup{by $\gamma$}}  
& \mathcal O_Y(lmD +(l+n)H). 
\end{align*}
These morphisms are surjective over $v$, so 
$v \notin \mathbb B(lmD+(l+n)H) =\mathbb B\left(D +\frac{l+n}{lm}H\right)$. 
Then it follows that $v\notin\mathbb B_-(D)$ 
since $m$ can be chosen as large as we want. 
\end{proof}

\section{Augmented and restricted base loci for lc pairs} 
\label{section:base loci of -K}

\subsection{On augmented and restricted base loci for lc pairs}
The aim of this subsection is to 
prove Theorem \ref{thm:inclusions} and 
its corollaries (including Theorem \ref{thm:1_intro} and \ref{thm:1.5_intro}). 
We emphasize that the argument in this subsection works 
not only for non-singular cases (or klt pairs) but also for lc pairs. 
We use the following notation:
\begin{notation} \label{notation:mor}
Let $\phi:X\to Y$ be a surjective morphism with connected fibers 
from  a normal projective variety $X$ 
to a normal $\mathbb Q$-Gorenstein (that is, $K_Y$ is $\mathbb Q$-Cartier) projective variety $Y$ with only canonical singularities. 
Let $\Delta$ be an effective $\mathbb Q$-divisor on $X$ 
such that $K_X+\Delta$ is $\mathbb Q$-Cartier. 
We denote by $V(\phi,\Delta)$ the subset of $Y$ consisting of \textit{regular} points $v$ with the following properties:  
\\ \noindent \quad {(1) 
The pair $(X,\Delta)$ is log canonical at every point in $\phi^{-1}(v)$. }
\\ \noindent \quad (2) 
There exists a birational morphism $\pi:X'\to X$ from a normal projective variety $X'$ such that $\phi':=\phi\circ\pi:X'\to Y$ is smooth over a neighborhood of $v$.  
\\ \noindent \quad (3) 
There exists an effective $\pi$-exceptional \textit{reduced} Weil divisor $E$ on $X'$ 
such that 
\begin{itemize}
\item 
$
\pi^{-1}_*\Delta +E 
\ge \pi^*(K_X +\Delta) -K_{X'}, 
$ 
and 
\item {${\phi'}^{-1}(v)$ intersects transversally with each component of $\pi^{-1}_*\Delta +E$. }
\end{itemize}
\end{notation}
\begin{rem} \label{remark:mor}
\noindent (1) The set $V(\phi,\Delta)$ is a Zariski-open set in $Y$. 
\\ \noindent (2) The pair $(X,\Delta)$ has 
at worst log canonical singularities at every point in $\phi^{-1}(V(\phi,\Delta))$. 
Furthermore, if the non-log canonical locus of $(X,\Delta)$ is not dominant over $Y$, then $V(\phi,\Delta)$ is automatically non-empty. 
\\ \noindent(3)  
The morphism $\phi: X \to Y$ is flat over $V(\phi,\Delta)$. 
\end{rem}
The following is one of our main results:
\begin{thm} \label{thm:inclusions}
Let the notation be as in Notation \ref{notation:mor}. 
Fix a point $v$ in $V(\phi,\Delta)$. 
Let $D$ be a $\mathbb Q$-Cartier divisor on $X$ and let 
$i \in \mathbb{Z}_{>0}$ be an integer such that $iD$ is Cartier. 
Then we have: 
\begin{itemize}
\item[\rm(1)] Suppose that  $\mathcal O_{X_v}(iD)$ is nef. 
If $\mathbb B_-(D)$ intersects $\phi^{-1}(v)$, then so does $\mathbb B_-(D-(K_{X/Y}+\Delta))$. 
\item[\rm(2)] Suppose that  $\mathcal O_{X_v}(iD)$ is ample. 
If $\mathbb B_+(D)$ intersects $\phi^{-1}(v)$, then so does $\mathbb B_+(D-(K_{X/Y}+\Delta))$. 
\end{itemize}
\end{thm}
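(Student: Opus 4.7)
My strategy is to prove both (1) and (2) by the contrapositive. Assuming $v \notin \phi(\mathbb B_-(D - K_{X/Y} - \Delta))$ (respectively $v \notin \phi(\mathbb B_+(D - K_{X/Y} - \Delta))$), I aim to deduce $v \notin \phi(\mathbb B_-(D))$ (respectively $v \notin \phi(\mathbb B_+(D))$). The bridge between these is positivity of an appropriate direct image sheaf on $Y$, obtained through a vanishing theorem on a log smooth birational model.

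First, I would reduce to the log smooth situation via the birational morphism $\pi:X'\to X$ and the reduced exceptional divisor $E$ provided by Notation~\ref{notation:mor}. Setting $\phi':=\phi\circ\pi$ and $\Delta' := \pi^{-1}_*\Delta + E$, the defining properties of $V(\phi,\Delta)$ ensure that $\phi'$ is smooth over a neighborhood of $v$, that $\phi'^{-1}(v)$ is transverse to every component of $\Delta'$, and that $K_{X'}+\Delta' \ge \pi^*(K_X+\Delta)$. Lemma~\ref{lem:pullback of base loci} transports base locus information from $X$ to $X'$ above $v$ and back. Hence it suffices to argue on the log smooth pair $(X',\Delta')$ over $Y$.

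Next, on the log smooth model I would establish the following positivity principle: if $L'$ is Cartier on $X'$ such that $L' - K_{X'/Y} - \Delta'$ decomposes numerically as the sum of a $\mathbb Q$-Cartier divisor whose restricted base locus misses $\phi'^{-1}(v)$ and an ample $\mathbb Q$-divisor, and if $L'|_{X'_v}$ is nef (respectively ample), then for $H$ sufficiently ample on $Y$ and $m \gg 0$ the sheaf $\phi'_*\mathcal O_{X'}(mL' + \phi'^* H)$ is generated by global sections (respectively separates $m$-jets) at $v$. The essential tools are Fujino's vanishing and torsion-freeness theorems for lc pairs, together with a relative Castelnuovo--Mumford regularity argument on $Y$. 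Applying this with $L'$ proportional to $i(D+A)$ for an arbitrary ample $A$ on $X$ (in the nef case) yields $v \notin \mathbb B(m(D+A))$ after absorbing $\phi^*H$ into an appropriate rescaling of $A$, hence $v \notin \mathbb B_-(D)$; the augmented case is analogous with $L'$ proportional to $i(D-A)$ for some specific ample $A$, giving $v \notin \mathbb B_+(D)$.

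The main obstacle is establishing the positivity principle above. In the lc (rather than klt) setting, Kawamata--Viehweg vanishing is unavailable, so one must invoke Fujino's Hodge-theoretic vanishing theorems for lc pairs. These require the boundary to be log smooth and in general position relative to the relevant fibers, which is exactly what condition~(3) of Notation~\ref{notation:mor} arranges: the presence of the reduced $\pi$-exceptional divisor $E$ inside $\Delta'$ and its transversality with $\phi'^{-1}(v)$ are precisely the hypotheses Fujino's theorems demand. A secondary, more technical obstacle is the careful bookkeeping required to ensure that the positivity on $X'$ descends to positivity on $X$ above $v$, which is where the flatness of $\phi$ over $V(\phi,\Delta)$ (Remark~\ref{remark:mor}(3)) together with Lemma~\ref{lem:pullback of base loci} becomes essential.
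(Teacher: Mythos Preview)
Your contrapositive strategy and the overall architecture (pass to a log smooth model, establish positivity of a direct image, then pull back) match the paper's approach. However, your ``positivity principle'' hides the crucial step, and as stated it does not follow from Fujino's vanishing theorems. Those theorems require the twist $L' - K_{X'/Y} - \Delta'$ to be (after rounding) semiample, nef and log big, or the like; the hypothesis that it merely decomposes as ``(divisor whose $\mathbb B_-$ misses the fiber) $+$ ample'' is not enough to invoke them directly, nor to run a Castelnuovo--Mumford argument, because neither $L'$ nor the first summand is assumed nef on all of $X'$.

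The paper fills this gap by the following trick, which you do not mention: since $\mathbb B(D - L + A) \cap X_v = \emptyset$ for a suitable ample $A$ (where $L = K_{X/Y}+\Delta$), one chooses an effective $\Gamma \sim_{\mathbb Q} D - L + A$ that is general enough relative to $X_v$ so that $v$ still lies in $V(\phi, \Delta')$ for $\Delta' := \Delta + \Gamma$. Then $K_{X/Y} + \Delta' \sim_{\mathbb Q} D + A$ is \emph{relatively ample} over a neighborhood of $v$, and one is in the standard setting for weak positivity of direct images; the paper then invokes~\cite[Theorem~E]{DM19} (packaged as Proposition~\ref{prop:base locus}) rather than redoing the vanishing argument. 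This absorption of an effective divisor into the boundary is the key idea, and without it your positivity principle is an assertion rather than a consequence of known results. A second, minor difference: the paper deduces~(2) from~(1) in two lines by choosing $A$ with $\mathbb B_+(D-L) = \mathbb B(D-L-A)$ and $D-A$ still fiberwise nef, rather than running a parallel argument.
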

%
To prove Theorem~\ref{thm:inclusions}, we need Proposition~\ref{prop:base locus}, which is an application of weak positivity theorems developed by~\cite{Fuj78, Kaw81, Vie83, Cam04, Fuj17}.  
We employ \cite[Theorem~E]{DM19}, since it deals with log canonical pairs 
and gives explicitly a Zariski-open subset {over} which direct images of relative {log} pluricanonical bundles are weakly positive. 
\begin{prop} \label{prop:base locus}
Let the notation be as in Notation \ref{notation:mor}. 
Let $V$ be a Zariski-open subset in $V(\phi,\Delta)$ and put $U:=\phi^{-1}(V)$. 
Suppose that $K_U+\Delta|_U$ is relatively ample over $V$. 
For an ample $\mathbb Q$-Cartier divisor $H$ on $Y$, 
we have 
$$
\mathbb B(K_{X/Y}+\Delta+\phi^*H)\cap U=\emptyset.  
$$ 
\end{prop}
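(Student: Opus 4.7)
The plan is to invoke the weak positivity of direct images of log pluricanonical bundles for lc pairs (\cite[Theorem~E]{DM19}) on a log resolution of $(X,\Delta)$, and then to combine it with the relative ampleness of $K_U+\Delta|_U$ over $V$ to produce global sections of a sufficiently divisible multiple of $K_{X/Y}+\Delta+\phi^*H$ that do not vanish at any prescribed $x\in U$.

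First I would take the log resolution $\pi:X'\to X$ and the $\pi$-exceptional reduced Weil divisor $E$ guaranteed by Notation~\ref{notation:mor}(2),(3), and set $\phi':=\phi\circ\pi$ and $\Gamma':=\pi^{-1}_*\Delta+E$. Property~(3) forces $E':=K_{X'}+\Gamma'-\pi^*(K_X+\Delta)$ to be effective and $\pi$-exceptional, so that $\pi_*\mathcal{O}_{X'}(mE')=\mathcal{O}_X$ for every $m$ divisible enough; by the projection formula
\[
\phi_*\mathcal{O}_X\!\left(m(K_{X/Y}+\Delta+\phi^*H)\right)=\phi'_*\mathcal{O}_{X'}\!\left(m(K_{X'/Y}+\Gamma'+\phi'^*H)\right).
\]
The pair $(X',\Gamma')$ is lc and $\Gamma'$ is SNC over $V$, so \cite[Theorem~E]{DM19} asserts that $\mathcal{F}:=\phi'_*\mathcal{O}_{X'}(m_0(K_{X'/Y}+\Gamma'))$ is weakly positive over $V$ for some sufficiently divisible $m_0$. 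Applying the definition of weak positivity with $a=1$ and ample divisor $m_0H$ yields $b>0$ such that $\hat{S}^b\mathcal{F}\otimes\mathcal{O}_Y(bm_0H)$ is globally generated over $V$. Letting $\mathcal{G}$ be the image of the multiplication map $\hat{S}^b\mathcal{F}\to\phi'_*\mathcal{O}_{X'}(bm_0(K_{X'/Y}+\Gamma'))$, the projection formula gives
\[
\mathcal{G}\otimes\mathcal{O}_Y(bm_0H)\subseteq\phi_*\mathcal{O}_X\!\left(bm_0(K_{X/Y}+\Delta+\phi^*H)\right),
\]
and this subsheaf is also globally generated over $V$.

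For the endgame, enlarge $m_0$ if necessary so that $m_0(K_{F_y}+\Delta|_{F_y})$ is globally generated on every fiber $F_y:=\phi^{-1}(y)$ with $y\in V$; this is permitted by the relative ampleness of $K_U+\Delta|_U$ over $V$. Fix $x\in U$, set $y:=\phi(x)\in V$, and choose $t\in H^0(F_y,m_0(K_{F_y}+\Delta|_{F_y}))$ with $t(x)\neq 0$. Base change on the smooth locus $V$ identifies $\mathcal{F}\otimes k(y)$ with $H^0(F_y,m_0(K_{F_y}+\Delta|_{F_y}))$ and the fibered multiplication with $S^bH^0(F_y,m_0(K_{F_y}+\Delta|_{F_y}))\to H^0(F_y,bm_0(K_{F_y}+\Delta|_{F_y}))$, hence the image of $\mathcal{G}\otimes k(y)$ in $H^0(F_y,bm_0(K_{F_y}+\Delta|_{F_y}))$ contains $t^b$. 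Global generation of $\mathcal{G}\otimes\mathcal{O}_Y(bm_0H)$ over $V$ then lifts $t^b$ to a global section $\bar{s}\in H^0(X,\mathcal{O}_X(bm_0(K_{X/Y}+\Delta+\phi^*H)))$ whose restriction to $F_y$ is a nonzero scalar multiple of $t^b$, so $\bar{s}(x)\neq 0$. Consequently $x\notin\mathrm{Bs}(bm_0(K_{X/Y}+\Delta+\phi^*H))$, yielding $\mathbb{B}(K_{X/Y}+\Delta+\phi^*H)\cap U=\emptyset$.

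The principal obstacle is the clean invocation of \cite[Theorem~E]{DM19}: one must keep track of the log resolution from Notation~\ref{notation:mor}, the coincidence of direct images on $X$ and $X'$ via the effective $\pi$-exceptional divisor $E'$, and the base change behaviour on the smooth, lc locus $V$. Once weak positivity is in hand, the rest is a standard pluricanonical manipulation based on the relative ampleness of $K_U+\Delta|_U$.
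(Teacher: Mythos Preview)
Your strategy coincides with the paper's: apply \cite[Theorem~E]{DM19} on a log resolution and then use the relative ampleness of $K_U+\Delta|_U$ to produce non-vanishing sections. Two technical points, however, are not merely bookkeeping---they are precisely what force the paper into its more elaborate setup.

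First, \cite[Theorem~E]{DM19} is formulated for a morphism to a \emph{smooth} projective base, whereas $Y$ here is only normal $\mathbb Q$-Gorenstein with canonical singularities. The paper therefore passes to a resolution $\tau:Y^\flat\to Y$ before invoking the theorem. Once this is done, $\tau^*H$ is only big and nef on $Y^\flat$, not ample, so your ``weak positivity with $a=1$ and ample divisor $m_0H$'' step no longer applies verbatim; the paper instead uses the effective global-generation form of \cite[Theorem~E]{DM19} with an auxiliary very ample $H^\flat$ on $Y^\flat$ and then absorbs it into $ks\tau^*H$ for $s\gg0$ (their Step~3). The canonical-singularity hypothesis on $Y$ enters here too, via $K_{Y^\flat}-\tau^*K_Y\ge 0$.

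Second, and more substantively, you write ``the image of the multiplication map $\hat S^b\mathcal F\to\phi'_*\mathcal O_{X'}(bm_0(K_{X'/Y}+\Gamma'))$'', but this map is not known to exist on all of $Y$: the target $\phi'_*\mathcal O_{X'}(\cdots)$ is torsion-free but not in general reflexive, so the multiplication $S^b\mathcal F\to\phi'_*(\cdots)$ need not factor through the reflexive hull $\hat S^b\mathcal F$. Equivalently, a global section of $\hat S^b\mathcal F\otimes\mathcal O_Y(bm_0H)$ only yields a section of $\mathcal O_X(bm_0(K_{X/Y}+\Delta+\phi^*H))$ over $\phi^{-1}(W)$, where $W$ is the locally-free locus of $\mathcal F$; extending it to $X$ requires $\mathrm{codim}_X\bigl(X\setminus\phi^{-1}(W)\bigr)\ge 2$, which can fail when $\phi'$ is not equi-dimensional. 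This is exactly why the paper routes the argument through the flattening $\phi^\flat:X^\flat\to Y^\flat$: equi-dimensionality of $\phi^\flat$ gives $\mathrm{codim}_{X^\flat}(X^\flat\setminus T)=\mathrm{codim}_{Y^\flat}(Y^\flat\setminus W)\ge 2$, and the normality of $X^\flat$ then allows the extension (their Step~4).

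In short, your outline is correct in spirit and matches the paper's approach, but the resolution of $Y$ and the passage through a flattening are not optional simplifications---they are needed to make the two highlighted steps rigorous.
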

\begin{proof}[Proof of Proposition \ref{prop:base locus}] \setcounter{step}{0}
Fix $v\in V$. We prove 
$\mathbb B(K_{X/Y}+\Delta+\phi^*H)\cap f^{-1}(v)=\emptyset.$ 
\begin{step} \label{step:notation}
We set up the notation used in the proof. 
Let $\tau:Y^\flat\to Y$ be a resolution of singularities 
passing through the flattening of $\phi$. 
Let $X^\flat$ be the normalization of the main component of $X\times_Y Y^\flat$. 
Let $\pi:X^\sharp \to X$ be a log resolution of $(X,\Delta)$ 
passing through $X^\flat \to X$. 
We define the notation by the following commutative diagram:
\begin{align} 
\label{diag:1}
\xymatrix{ X \ar[d]_-\phi & X^\flat \ar[l]_\rho \ar[d]_-{\phi^\flat} & X^\sharp \ar@/_20pt/[ll]_\pi \ar[l]_-\sigma \ar[dl]^-{\phi^\sharp} \\
Y & Y^\flat. \ar[l]^-\tau & 
}
\end{align}
%
%
Let $E$ and $F$ be two effective $\pi$-exceptional $\mathbb Q$-divisors on $X^\sharp$ with no common components such that 
$$
K_{X^\sharp}+\pi^{-1}_*\Delta +E = \pi^*(K_X+\Delta) +F. 
$$
It follows that $E':=K_{Y^\flat}-\tau^*K_Y\ge0$ 
since $Y$ has only canonical singularities. 
Put $\Delta^\sharp:=\pi^{-1}_*\Delta+E+{\phi^\sharp}^*E'$. 
Then we have 
$$
K_{X^\sharp/Y^\flat}+\Delta^\sharp = \pi^*(K_{X/Y}+\Delta) +F. 
$$
Let $k$ be an integer large and divisible enough. 
Set $P^\sharp:=k(K_{X^\sharp/Y^\flat}+\Delta^\sharp)$. 
Put $V^\flat:=\tau^{-1}(V)$ and $U^\flat:=\rho^{-1}(U)=(\phi^\flat)^{-1}(V^\flat)$. 
 
We may assume the following conditions.
\begin{itemize}
\item The morphism $\tau$ (resp. $\rho$) is an isomorphism over $V$ (resp. $U$). 
\item The divisor $\Delta^\sharp$ is simple normal crossing. 
\item The morphism $\phi^\sharp$ is smooth over a neighborhood of $\tau^{-1}(v)$. 
\item The divisor $\Delta^\sharp|_{X^\sharp_v}$ is simple normal crossing and its each coefficient is at most one, by the definition of $V(\phi,\Delta)$ in Notation~\ref{notation:mor};
\item In particular, each coefficient of horizontal components 
(that is, components dominating $Y$) in $\Delta^\sharp$ is at most one.
\end{itemize}
%
%
\end{step}
\begin{step} \label{step:loc free}
We prove that $\phi^\sharp_*\mathcal O_{X^\sharp}(P^\sharp)$ is locally free over $V^\flat$. 
Since 
$$
P^\sharp =k{\pi}^*(K_{X/Y}+\Delta)+kF
$$ 
and $F$ is $\pi$-exceptional, 
we have $\pi_*\mathcal O_{X^\sharp}(P^\sharp)\cong\mathcal O_X(k(K_{X/Y}+\Delta))$. 
Hence the claim of this step is equivalent to saying the local freeness of 
$$
\tau_*\phi^\sharp_*\mathcal O_{X^\sharp}(P^\sharp)
\cong \phi_*\pi_*\mathcal O_{X^\sharp}(P^\sharp)
\cong \phi_*\mathcal O_{X}(k(K_{X/Y}+\Delta))
$$
over $V$, since $\tau$ is an isomorphism over $V$. This follows from the flatness of $\phi$ over $V$ and the relative ampleness of $(K_{X/Y}+\Delta)|_U$ over $V$. 
\end{step}
%
%
\begin{step} \label{step:DM}
Shrinking $V$ if necessarily, we show that the sheaf
\begin{align*}
\left( \phi^\sharp_*\mathcal O_{X^\sharp}(P^\sharp) \right)^{[s]}
\otimes 
\mathcal O_{Y^\flat}\left(ks {\tau}^*H\right) 
\end{align*}
is globally generated on $V^\flat$ for $s\gg1$. 
Here $(\bullet )^{[s]}$ denotes 
the double dual $\left((\bullet)^{\otimes s}\right)^{\vee\vee}$ 
of the $s$-times tensor. 
Let $H^\flat$ be a very ample divisor on $Y^\flat$. 
Thanks to \cite[Theorem~E]{DM19}, we find a neighborhood of $\rho^{-1}(v)$ on which
\begin{align}
\left( \phi^\sharp_*\mathcal O_{X^\sharp}(P^\sharp) \right)^{[s]}
\otimes 
\mathcal O_{Y^\flat}\left(kK_{Y^\flat}+k(\dim Y+1)H^\flat\right)
\label{sheaf:1}
\end{align}
is globally generated for any $s\ge1$.
Shrinking $V$ again, we may assume that these sheaves are globally generated over $V^\flat$. 
If $s\gg0$, then the sheaf 
\begin{align}
\mathcal O_{Y^\flat}\big(ks \tau^*H -kK_{Y^\flat}-k(\dim Y+1)H^\flat\big) 
\label{sheaf:2}
\end{align}
is globally generated on $V^\flat$, 
since $\tau$ is an isomorphism over $V$. 
By taking the tensor product of (\ref{sheaf:1}) and (\ref{sheaf:2}), 
we obtain the claim of this step. 
\end{step}
\begin{step} \label{step:free}
Put $P^\flat:=\sigma_*P^\sharp =k\rho^*(K_{X/Y}+\Delta) +k\sigma_* F$. 
Note that $P^\flat$ is a Weil divisor that is not necessarily $\mathbb Q$-Cartier. 
We prove that $\mathcal O_{X^\flat}(sP^\flat+ks{\rho}^*\phi^*H)$ is globally generated on $U^\flat$. 
Let $W\subseteq Y^\flat$ be the largest Zariski-open subset on which $\phi^\sharp_*\mathcal O_{X^\sharp}(P^\sharp)$ is locally free. 
Set $T:=(\phi^\flat)^{-1}(W)$ and $h:=\phi^\flat|_{T}:T\to W$. 
Then, by Step~\ref{step:loc free}, we have $V^\flat \subseteq W$ and $U^\flat\subseteq T$. 
Since $\phi^\flat$ is an equi-dimensional morphism, we have 
$$
\mathrm{codim}_{X^\flat}(X^\flat\setminus T) =\mathrm{codim}_{Y^\flat}(Y^\flat\setminus W) \ge 2, 
$$ 
and thus the claim of this step is equivalent to saying that 
$\mathcal O_{T}((sP^\flat+ks\rho^*\phi^*H)|_T)$
is globally generated over $U^\flat$. 
{
Since $P^\flat\sim k\rho^*(K_{X/Y}+\Delta)+k\sigma_*F$ and $\sigma_*F$ is $\rho$-exceptional, it follows that 
$P^\flat|_{U^\flat}\sim k\rho^*(K_{X/Y}+\Delta)|_{U^\flat}$, 
and so the natural inclusion 
\begin{align}
\sigma_*\mathcal O_{X^\sharp}(sP^\sharp +ks\pi^*\phi^*H) 
\hookrightarrow \mathcal O_{X^\flat}(sP^\flat +ks\rho^*\phi^*H)
\label{mor:inj}
\end{align}
is an isomorphism over $U^\flat$. 
Note that $\rho$ is an isomorphism over $U$. 
Hence, we show that 
$
\left( \sigma_*\mathcal O_{X^\sharp}(sP^\sharp +ks\pi^*\phi^*H) \right)|_T
$
is globally generated over $U^\flat$. 
Put $\mathcal M :=\left( \sigma_*\mathcal O_{X^\sharp}(P^\sharp) \right)|_T$. 
By Step~\ref{step:DM}, we see that 
\begin{align*}
\mathcal G:=
\left( h_* 
\mathcal M 
\right)^{\otimes s}
\otimes \mathcal O_{W}(ks\tau^*H|_W)
\cong
\left.  \left( 
\left( \phi^\sharp_*\mathcal O_{X^\sharp}(P^\sharp) \right)^{[s]}
\otimes \mathcal O_{Y^\flat}(ks\tau^*H)
\right) \right|_W
\end{align*}
is globally generated over $V^\flat$. 
Since 
$
\sigma_*\mathcal O_{X^\sharp}(P^\sharp) 
\overset{\alpha}{\hookrightarrow} \mathcal O_{X^\flat}(P^\flat) 
$
is an isomorphism on $U^\flat$ and 
$P^\flat|_{U^\flat}$ is a Cartier divisor that is relatively very ample over $V^\flat$, 
we see that $\mathcal M|_{U^\flat}$ is a line bundle that is relatively very ample over $V^\flat$, 
so the natural morphisms
\begin{align}
h^* \left( h_* \mathcal M \right)^{\otimes s}
\xrightarrow{} \mathcal M^{\otimes s} 
\xrightarrow{\alpha^{\otimes s}|_T} \mathcal O_T(s P^\flat|_T)
\label{mor:n}
\end{align}
are surjective over $U^\flat$, which induce the morphism  
\begin{align}
h^*\mathcal G
\to \mathcal O_T(sP^\flat|_T+ksh^*(\tau^*H)|_W)
\end{align}
that is surjective over $U^\flat$, 
and thus the claim of this step follows from 
$h^*(\tau^*H)|_W=(\rho^*\phi^*H)|_T$.  
}
%
\end{step}
\begin{step} \label{step:last}
We complete the proof. By the projection formula, we have 
\begin{align*}
\mathcal O_X(ks(K_{X/Y}+\Delta+\phi^*H))
\cong \rho_*\mathcal O_{X^\flat}(sP+ks\rho^*\phi^*H). 
\end{align*}
By Step~\ref{step:free}, the right-hand side is globally generated over $U\supseteq X_v$, 
since $\rho$ is an isomorphism over $U$ and $\rho_*\mathcal O_{X^\flat}\cong \mathcal O_X$. 
Thus $\mathbb B(K_{X/Y}+\Delta+\phi^*H)\cap X_v=\emptyset$. 
\end{step}
\end{proof}
\begin{proof}[Proof of Theorem~\ref{thm:inclusions}.]
Put $L:=K_{X/Y}+\Delta$. 
We first prove (1). 
Under the assumption of $\mathbb B_-(D-L)\cap X_v=\emptyset$, 
we show that $\mathbb B_-(D)\cap X_v =\emptyset$. 
Let $A$ be an ample $\mathbb Q$-Cartier divisor on $X$ 
{such that $\mathbb B(D-L+A)\cap X_v=\emptyset$.}
It is enough to show that 
$$\mathbb B(D+2A)\cap X_v =\emptyset.$$
{By the choice of $A$,}
we can find a $\mathbb Q$-Cartier divisor $\Gamma\ge0$ on $X$ such that
\begin{itemize}
\item $\Gamma \sim_{\mathbb Q}D-L+A$, and
\item $V(\phi,\Delta')$ defined in Notation~\ref{notation:mor} contains $v$, where $\Delta':=\Delta+\Gamma$.
\end{itemize}
By the assumption, shrinking $V$ if necessary, we may assume that $(A+D)|_U$ is ample over $V$. 
%
It follows from 
$$ K_{X/Y}+\Delta' \sim_{\mathbb Q} L+\Gamma \sim_{\mathbb Q} A+D  $$
that $K_{U/V}+\Delta'|_U$ is also ample over $V$. 
Take an ample $\mathbb Q$-Cartier divisor $H$ on $Y$ so that $A-f^*H$ is ample. 
Then Proposition~\ref{prop:base locus} implies that  
$$
\mathbb B(2A+D) \cap X_v
\subseteq \mathbb B(A+D+\phi^*H) \cap X_v
=\mathbb B(K_{X/Y}+\Delta'+ \phi^*H) \cap X_v
=\emptyset. 
$$
Note that $X_v\subseteq U$. 

Now we prove (2). 
Assuming $\mathbb B_+(D-L)\cap X_v=\emptyset$, we show $\mathbb B_+(D)\cap X_v =\emptyset$. 
Take an ample $\mathbb Q$-Cartier divisor $A$ on $X$ so that  
\begin{itemize}
\item $\mathbb B_+(D-L)=\mathbb B(D-L-A) \supseteq \mathbb B_-(D-L-A)$, and  
\item $\mathcal O_{X_v}(j(D-A))$ is nef for some $j\ge1$ divisible enough. 
\end{itemize}
Then $\mathbb B_-(D-L-A) \cap X_v =\emptyset$ by the assumption, 
and thus $\mathbb B_-(D-A)\cap X_v=\emptyset$ by (1). 
Then the assertion follows from 
$\mathbb B_+(D)\subseteq \mathbb B_-(D-A)$. 
\end{proof}
\begin{cor}\label{cor:pullback}
Let the notation be as in Notation \ref{notation:mor}. 
Let $E$ be a $\mathbb Q$-Cartier divisor on $Y$. 
Then we have: 
\begin{itemize}
	\item[(1)] $V(\phi,\Delta) \cap \mathbb B_-(E) 
\subseteq 
\phi\left(\mathbb B_-(\phi^*E -(K_{X/Y}+\Delta))\right) $, and 
\label{inclusion:cor-1}
\item[(2)]
$V(\phi,\Delta)\cap \mathbb B_+(E) 
\subseteq 
\phi\left(\mathbb B_+(\phi^*E -(K_{X/Y}+\Delta))\right)$.
\label{inclusion:cor-2}
\end{itemize}
\end{cor}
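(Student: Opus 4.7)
The plan is to reduce both statements to Theorem~\ref{thm:inclusions} by pulling $E$ back to $X$ and invoking Lemma~\ref{lem:pullback of base loci}. The key observation is that for any $v \in V(\phi,\Delta)$, the restriction $\mathcal O_{X_v}(\phi^{*}E|_{X_v})$ is trivial, since the composition $X_v \hookrightarrow X \xrightarrow{\phi} Y$ factors through the single point $v$; in particular it is nef.

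For part~(1), first recall from Remark~\ref{remark:mor}(3) that $\phi$ is flat over $V(\phi,\Delta)$. Lemma~\ref{lem:pullback of base loci}(1) then transforms the hypothesis $v \in V(\phi,\Delta) \cap \mathbb B_{-}(E)$ into $\phi^{-1}(v) \cap \mathbb B_{-}(\phi^{*}E) \neq \emptyset$. Since $\phi^{*}E|_{X_v}$ is nef, Theorem~\ref{thm:inclusions}(1) applied with $D = \phi^{*}E$ produces a point of $\phi^{-1}(v) \cap \mathbb B_{-}(\phi^{*}E - (K_{X/Y}+\Delta))$, which is exactly the required conclusion.

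For part~(2), the analogous application of Lemma~\ref{lem:pullback of base loci}(2) only gives $\phi^{-1}(v) \cap \mathbb B_{+}(\phi^{*}E) \neq \emptyset$, but here Theorem~\ref{thm:inclusions}(2) demands fiberwise \emph{ampleness} of $D|_{X_v}$, whereas $\phi^{*}E|_{X_v}$ is only trivial. I would handle this by perturbing: pick any ample $\mathbb Q$-Cartier divisor $A$ on $X$ and set $D_\varepsilon := \phi^{*}E + \varepsilon A$ for a small rational $\varepsilon > 0$, so that $D_\varepsilon|_{X_v}$ becomes ample.

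The main technical obstacle is then to verify the stabilization
\[
\mathbb B_{+}(M + \varepsilon A) \;=\; \mathbb B_{+}(M) \qquad \text{for all sufficiently small rational } \varepsilon > 0,
\]
applied simultaneously to $M = \phi^{*}E$ and to $M = \phi^{*}E - (K_{X/Y}+\Delta)$. This should follow from the standard description $\mathbb B_{+}(N) = \mathbb B(N - \delta A)$ valid for every small $\delta > 0$: plugging in $N = M + \varepsilon A$ with $\delta > \varepsilon$ and $\delta - \varepsilon$ still small gives $\mathbb B(M - (\delta - \varepsilon)A) = \mathbb B_{+}(M)$. Once $\varepsilon$ is chosen small enough that both stabilizations hold, Theorem~\ref{thm:inclusions}(2) applied to $D_\varepsilon$ yields a point of $\phi^{-1}(v) \cap \mathbb B_{+}(D_\varepsilon - (K_{X/Y}+\Delta))$, which by the stabilization equals $\phi^{-1}(v) \cap \mathbb B_{+}(\phi^{*}E - (K_{X/Y}+\Delta))$, completing~(2).
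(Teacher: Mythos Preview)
Your argument for part~(1) is correct and matches the paper's proof.

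For part~(2), the stabilization claim $\mathbb B_+(M+\varepsilon A)=\mathbb B_+(M)$ for small $\varepsilon>0$ is false. Take $M=0$: then $\mathbb B_+(0)=X$ (the zero divisor is not big) while $\mathbb B_+(\varepsilon A)=\emptyset$ for every $\varepsilon>0$. Your justification breaks because the threshold $\delta_0$ in the identity $\mathbb B_+(N)=\mathbb B(N-\delta A)$ for $0<\delta\le\delta_0$ depends on $N$; when $N=\varepsilon A$ one computes $\delta_0=\varepsilon$, so you cannot choose $\delta>\varepsilon$. This is not a peripheral case: the application $E=0$ is exactly what yields Corollary~\ref{cor:dominate}, and there your perturbed divisor $D_\varepsilon=\varepsilon A$ has $\mathbb B_+(D_\varepsilon)=\emptyset$, so the hypothesis of Theorem~\ref{thm:inclusions}(2) is vacuous and the argument produces nothing. (Incidentally, stabilization on the \emph{output} side is unnecessary: the inclusion $\mathbb B_+(M+\varepsilon A)\subseteq\mathbb B_+(M)$ always holds, which is all you would need there.)

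The paper sidesteps this by perturbing on the base rather than on $X$, and by reducing (2) to the already-established (1) instead of invoking Theorem~\ref{thm:inclusions}(2). One picks a small ample $H$ on $Y$ with $\mathbb B_+(E)\subseteq\mathbb B_-(E-H)$, applies part~(1) to $E-H$ to land in $\phi\bigl(\mathbb B_-(\phi^*E-\phi^*H-L)\bigr)$, and then absorbs $\phi^*H$ into an ample $A$ on $X$ chosen so that $\mathbb B_+(\phi^*E-L)=\mathbb B(\phi^*E-L-A)$ and $A-\phi^*H$ is ample.
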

\begin{rem} \label{rem:FG, Deng}
By putting $E:=-K_Y$ in the above corollary, we can recover 
\cite[Corollary~2.9]{KoMM92} (in characteristic zero), 
\cite[Theorems~D and~E]{Den17}, and \cite[Theorem~1.1]{FG12}. 
\end{rem}
\begin{proof}[Proof of Corollary~\ref{cor:pullback}]
Put $V:=V(\phi,\Delta)$. 
Since we have $V\cap \mathbb B_-(E) = V\cap \phi(\mathbb B_-(\phi^*E))$ by Lemma~\ref{lem:pullback of base loci}~(1), we get (1) of the corollary 
by applying Theorem~\ref{thm:inclusions}~(1) for $D=\phi^*E$.
We show (2). Put $L:=K_{X/Y}+\Delta$. 
Pick an ample $\mathbb Q$-Cartier divisor $A$ on $X$ such that 
$\mathbb B_+(\phi^*E -L) =\mathbb B(\phi^*E -L -A)$. 
Take an ample $\mathbb Q$-Cartier divisor $H$ on $Y$ so that 
$\mathbb B_+(E) \subseteq \mathbb B_-(E -H)$ 
and $A-\phi^*H$ is ample. 
Then we have 
$$
V\cap \mathbb B_+(E) 
\subseteq 
V\cap \mathbb B_-(E-H) 
\overset{\textup{(1)}}
{\subseteq} 
V\cap \phi \left( \mathbb B(\phi^*E-\phi^*H -L) \right),
$$
and thus the assertion follows from 
$\mathbb B(\phi^*E-\phi^*H-L) \subseteq \mathbb B(\phi^*E -L -A) = \mathbb B_+(\phi^*E -L)$. 
\end{proof}
\begin{cor}\label{cor:dominate}
Let the notation be as in Notation \ref{notation:mor}. 
Then $\mathbb B_+(-(K_{X/Y}+\Delta))$ and $\mathbb B_-(-(K_{X/Y}+\Delta+\phi^*H))$
are dominant over $Y$ for any ample $\mathbb Q$-Cartier divisor $H$ on $Y$ 
unless $Y$ is one point. 
\end{cor}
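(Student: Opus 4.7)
The plan is to apply Corollary~\ref{cor:pullback} with carefully chosen divisors $E$ on the base $Y$. The underlying observation is that, as soon as $\dim Y \ge 1$, certain base loci on $Y$ itself are forced to occupy all of $Y$: any anti-ample $\mathbb Q$-Cartier divisor on a positive-dimensional projective variety has no nonzero global sections of any positive multiple, so its stable base locus is the whole space. This will let us take the left-hand sides of the inclusions in Corollary~\ref{cor:pullback} to be as large as possible, from which dominance of the right-hand sides will be immediate.

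More precisely, assume $Y$ is not a single point, and fix an ample $\mathbb Q$-Cartier divisor $H$ on $Y$. First I would record two elementary facts. For every ample $\mathbb Q$-Cartier divisor $A$ on $Y$, we have $H^0(Y, \mathcal O_Y(-mA)) = 0$ for every $m > 0$, hence $\mathbb B(-A) = Y$. Consequently
\[
\mathbb B_+(0) \;=\; \bigcap_{A \text{ ample}} \mathbb B(-A) \;=\; Y.
\]
For the restricted case, using $A = \tfrac12 H$ in the definition of $\mathbb B_-$, we get
\[
\mathbb B_-(-H) \;\supseteq\; \mathbb B\bigl(-H + \tfrac12 H\bigr) \;=\; \mathbb B\bigl(-\tfrac12 H\bigr) \;=\; Y,
\]
so $\mathbb B_-(-H) = Y$ as well.

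Next I would apply Corollary~\ref{cor:pullback}~(2) with $E = 0$ to obtain
\[
V(\phi,\Delta) \;=\; V(\phi,\Delta) \cap \mathbb B_+(0) \;\subseteq\; \phi\bigl(\mathbb B_+(-(K_{X/Y}+\Delta))\bigr),
\]
and Corollary~\ref{cor:pullback}~(1) with $E = -H$ to obtain
\[
V(\phi,\Delta) \;=\; V(\phi,\Delta) \cap \mathbb B_-(-H) \;\subseteq\; \phi\bigl(\mathbb B_-(-(K_{X/Y}+\Delta+\phi^*H))\bigr).
\]
Since $V(\phi,\Delta)$ is a nonempty Zariski-open subset of $Y$ (Remark~\ref{remark:mor}), hence dense, both right-hand sides are dense in $Y$, and the two base loci on $X$ therefore dominate $Y$.

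No serious obstacle remains once Corollary~\ref{cor:pullback} is in hand: all the hard geometric content, namely the weak positivity of direct images of relative log pluricanonical sheaves underlying Proposition~\ref{prop:base locus} and Theorem~\ref{thm:inclusions}, is already absorbed there. The present corollary is simply the specialization of those pullback comparisons to the tautological choices $E = 0$ and $E = -H$, combined with the elementary vanishing of global sections of anti-ample line bundles on a positive-dimensional variety.
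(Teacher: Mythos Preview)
Your proof is correct and follows exactly the approach of the paper, which simply says ``This follows from Corollary~\ref{cor:pullback} by putting $E=0$ or $E=-H$.'' You have just spelled out the elementary computations $\mathbb B_+(0)=Y$ and $\mathbb B_-(-H)=Y$ that make this one-line reference work.
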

\begin{proof}
This follows from Corollary~\ref{cor:pullback} by putting $E=0$ or $E=-H$. 
\end{proof}
\begin{cor} \label{cor:rel min model}
Let the notation be as in Notation \ref{notation:mor}. 
Suppose that $\mathcal O_{X_v}(i(K_X+\Delta))$ is nef for $i \in \mathbb{Z}_{>0}$ divisible enough and a point $v\in V(\phi,\Delta)$.  
Then $\mathbb B_-(K_{X/Y}+\Delta) \cap X_v = \emptyset$. 
In particular, $K_{X/Y}+\Delta$ is pseudo-effective. 
\end{cor}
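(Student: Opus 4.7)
The plan is to invoke Theorem \ref{thm:inclusions}(1) with the specific choice $D := K_{X/Y}+\Delta$ and extract the corollary by contraposition. The key observation is that with this choice the shifted divisor $D-(K_{X/Y}+\Delta)$ becomes the zero divisor, so the hypothesis side of the implication is tautologically satisfied, and one only needs to verify that the nefness assumption on the fiber passes between the two natural candidates $K_X+\Delta$ and $K_{X/Y}+\Delta$.

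To carry this out, I would first verify the required nefness hypothesis on $\mathcal O_{X_v}(iD)$. Since $\phi^{*}K_{Y}$ is pulled back from $Y$, the projection formula gives $(\phi^{*}K_{Y}) \cdot C = K_{Y} \cdot \phi_{*}C = 0$ for every curve $C \subseteq X_{v}$ (as $\phi_{*}C=0$), so $\phi^{*}K_{Y}|_{X_{v}}$ is numerically trivial. Consequently $\mathcal O_{X_{v}}(i(K_{X/Y}+\Delta))$ and $\mathcal O_{X_{v}}(i(K_{X}+\Delta))$ are numerically equivalent, and the standing nefness assumption on the latter immediately yields the nefness of the former. This is exactly the hypothesis of Theorem \ref{thm:inclusions}(1) for $D=K_{X/Y}+\Delta$.

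Next, with this choice the divisor $D-(K_{X/Y}+\Delta)$ equals zero, and $\mathbb{B}_{-}(0)=\emptyset$ (for any ample $\mathbb{Q}$-Cartier $A$ one has $\mathbb{B}(A)=\emptyset$, hence the union in the definition of $\mathbb{B}_-$ is empty). In particular $\mathbb{B}_{-}(D-(K_{X/Y}+\Delta))$ does not meet $\phi^{-1}(v)$, so the contrapositive of Theorem \ref{thm:inclusions}(1) gives $\mathbb{B}_{-}(K_{X/Y}+\Delta)\cap X_{v}=\emptyset$, which is the first assertion.

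For the pseudo-effectivity statement, the previous step shows that $\mathbb{B}_{-}(K_{X/Y}+\Delta)$ is a proper subset of $X$, since it misses the entire fiber $X_{v}$. Invoking the standard characterization that a $\mathbb{Q}$-Cartier divisor $L$ is pseudo-effective if and only if $\mathbb{B}_{-}(L)\neq X$ (equivalently, $L+A$ is $\mathbb{Q}$-effective for every ample $\mathbb{Q}$-Cartier $A$), we conclude that $K_{X/Y}+\Delta$ is pseudo-effective. I do not foresee any serious obstacle in this argument: the corollary is a clean formal consequence of Theorem \ref{thm:inclusions}(1) obtained by specializing $D$ so that the shifted divisor vanishes, and the only verification needed is the fiberwise numerical equivalence between $K_{X/Y}+\Delta$ and $K_X+\Delta$.
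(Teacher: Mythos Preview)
Your proposal is correct and follows exactly the same approach as the paper: set $D=K_{X/Y}+\Delta$, observe that $\mathbb B_-(D-(K_{X/Y}+\Delta))=\mathbb B_-(0)=\emptyset$, and apply Theorem~\ref{thm:inclusions}(1). The paper's proof is a two-line version of yours; you have simply spelled out the fiberwise numerical equivalence $\mathcal O_{X_v}(i(K_{X/Y}+\Delta))\equiv\mathcal O_{X_v}(i(K_X+\Delta))$ and the pseudo-effectivity criterion, both of which the paper leaves implicit.
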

\begin{proof}
Set $D:=K_{X/Y}+\Delta$.  Then we have $\mathbb B_-\left(D-(K_{X/Y}+\Delta)\right)=\emptyset$, and thus the assertion 
follows from Theorem~\ref{thm:inclusions}~(1). 
\end{proof}

\subsection{Algebraic proof of Campana--Cao--Matsumura's equality}
The following theorem is a slight extension of  \cite[Theorem 1.2]{CCM}, 
which generalizes Hacon--M\textsuperscript cKernan's question proved in \cite{EG19}. 
In this subsection, we give an algebraic proof for this equality.

\begin{thm}\label{thm:nd}
Let $X$ be a normal projective variety and let $\Delta$ be an effective $\mathbb Q$-divisor on $X$ such that $(X,\Delta)$ is klt. 
Let $\phi:X\to Y$ be a morphism with connected fibers onto
a $\mathbb Q$-Gorenstein normal projective variety $Y$ 
with only canonical singularities. 
If $-(K_{X/Y}+\Delta)$ is nef, then 
\begin{align*}
\mathrm{nd}(-(K_{X/Y}+\Delta))=\mathrm{nd}(-(K_{F}+\Delta|_F)) 
\text{ for a general fiber } F.  
\end{align*}
\end{thm}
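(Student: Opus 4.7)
Write $L := -(K_{X/Y}+\Delta)$ and $t := \mathrm{nd}(L|_{F})$ for a general fiber $F$. The inequality $\mathrm{nd}(L) \geq t$ is the easy half: picking ample divisors $A$ on $X$ and $H$ on $Y$ with $A-\phi^{*}H$ ample, and expanding $A^{n-t}$ in powers of $\phi^{*}H$ (the $k=m$ term is what survives since $(\phi^{*}H)^{k}=0$ for $k>m:=\dim Y$),
\[
L^{t}\cdot A^{n-t}\ \geq\ \binom{n-t}{m}L^{t}\cdot(A-\phi^{*}H)^{n-t-m}\cdot(\phi^{*}H)^{m}\ =\ \binom{n-t}{m}H^{m}\cdot(L|_{F})^{t}\cdot((A-\phi^{*}H)|_{F})^{n-t-m}\ >\ 0,
\]
using $\mathrm{nd}(L|_F) = t$ and ampleness of $(A-\phi^{*}H)|_{F}$. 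So the real content is the reverse inequality $\mathrm{nd}(L) \leq t$.

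\textbf{Framework for $\mathrm{nd}(L) \leq t$.} By Nakayama's asymptotic description, $\mathrm{nd}(L) = \max\{k : \limsup_{m} m^{-k}h^{0}(X, mL+A) > 0\}$ for any fixed ample $A$ on $X$, so it suffices to prove $h^{0}(X, mL+A) = O(m^{t})$. Leray gives $h^{0}(X, mL+A) = h^{0}(Y, \mathcal{F}_{m})$ with $\mathcal{F}_{m} := \phi_{*}\mathcal{O}_{X}(mL+A)$; since $mL+A$ is ample on $X$ (nef $+$ ample), $\mathcal{F}_{m}$ is torsion-free of generic rank
\[
r_{m} \ =\ h^{0}(F, (mL+A)|_{F}) \ =\ O(m^{t})
\]
by Nakayama on $F$ (Kodaira vanishing on general fibers ensures equality between the rank and the sectional dimension). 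The target becomes $h^{0}(Y, \mathcal{F}_{m}) \leq C\cdot r_{m}$ for some $C$ independent of $m$, which I would derive by showing $\mathcal{F}_{m}$ is numerically flat (possibly after a finite \'etale cover of $Y$). For the weak-positivity side, rewrite $mL+A = K_{X/Y} + N_{m}$ with $N_{m} := (m+1)L + A + \Delta$, so that $N_{m}-\Delta = (m+1)L+A$ is ample and \cite[Theorem~E]{DM19} (used as in Proposition~\ref{prop:base locus}) yields weak positivity of $\mathcal{F}_{m}$ from below. For the matching upper bound I would invoke the flatness-type results for direct images developed in this paper under nefness of $-K_{X/Y}-\Delta$ (cf.\ Theorem~\ref{r-thm:locallytrivial} and Section~\ref{intro3}, originating in \cite{CH19, CCM}); numerical flatness of $\mathcal{F}_{m}$ forces $h^{0}(Y, \mathcal{F}_{m}) \leq r_{m}\cdot h^{0}(Y, \mathcal{O}_{Y})$, completing the estimate.

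\textbf{Main obstacle.} The delicate step is the upper bound on positivity of $\mathcal{F}_{m}$ uniformly in $m$. Weak positivity controls $\mu_{\min}(\mathcal{F}_{m}|_{C}) \geq 0$ on a general complete-intersection curve $C\subset Y$ but says nothing about $\mu_{\max}$; forcing $\mu_{\max}(\mathcal{F}_{m}|_{C}) = O(1)$ independently of $m$ must genuinely use the antinefness of $K_{X/Y}+\Delta$ rather than nefness of $L$ alone (an arbitrary nef $L$ can have $\mathrm{nd}(L) > \mathrm{nd}(L|_F)$, so the identity $L = -K_{X/Y}-\Delta$ is essential). A possible alternative reducing to $\dim Y = 1$, by cutting $Y$ with very general ample hypersurfaces and propagating the klt/canonical hypotheses to $(X_{H},\Delta|_{X_{H}})\to H$ via Bertini, only drops the cohomological dimension of $Y$ and does not circumvent the core slope-bound issue, which is precisely where the paper's algebraic treatment of direct-image flatness under the anti-nefness assumption becomes the key input.
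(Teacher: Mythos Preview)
Your treatment of the inequality $\mathrm{nd}(L)\ge t$ is fine and matches the paper's. The reverse inequality is where your proposal diverges from the paper and where it has a genuine gap.

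You reduce $\mathrm{nd}(L)\le t$ to the estimate $h^0(Y,\mathcal F_m)\le C\,r_m$ and propose to obtain it from numerical flatness of $\mathcal F_m=\phi_*\mathcal O_X(mL+A)$, citing Theorem~\ref{r-thm:locallytrivial} and the material of Section~\ref{section:small B-}. Two problems. First, those results are stated and proved under the standing hypothesis that $X$ and $Y$ are projective \emph{manifolds} (see the opening of Section~\ref{section:small B-}), whereas Theorem~\ref{thm:nd} allows $X$ normal klt and $Y$ with canonical singularities; the citation does not cover the theorem's setting. Second, the proofs in Section~\ref{section:small B-} are analytic (singular metrics, curvature of direct images), and the explicit purpose of Theorem~\ref{thm:nd} is to supply an \emph{algebraic} proof of an equality already known analytically from \cite{CCM}; routing through that machinery would, even in the smooth case, amount to reproving \cite{CCM} rather than giving an independent argument. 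Your ``Main obstacle'' paragraph correctly identifies the uniform upper slope bound as the crux, but you have not supplied it.

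The paper's proof avoids direct images on $Y$ altogether. It filters $H^0(X,A-lL)$ by the powers $\mathcal I^m$ of the ideal sheaf of a general fiber $F$ and uses Proposition~\ref{prop:univ bound}: there is a constant $C$, independent of $\lambda\ge 0$, bounding $\mathrm{mult}_{\phi^{-1}(Z)}(\Gamma)$ for every effective $\Gamma\equiv D-\lambda(K_{X/Y}+\Delta)$. Applied with $D=A$ and $Z$ a general point of $Y$, this yields an $m_0$ with $h^0(X,\mathcal I^{m_0}(A-lL))=0$ for all $l\ge 0$, whence
\[
h^0(X,A-lL)\ \le\ \sum_{m=0}^{m_0-1} h^0\!\left(F,\ (\mathcal I^m/\mathcal I^{m+1})(A-lL)\right),
\]
and each graded piece, being a torsion-free $\mathcal O_F$-module, embeds into a finite direct sum of ample line bundles on $F$, so the right-hand side is $O(l^{t})$. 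The input Proposition~\ref{prop:univ bound} is itself purely algebraic: after blowing up to make $Z$ divisorial, one manufactures an auxiliary klt pair $(X,B)$ with $K_{X/Y}+B\equiv (\lambda+m)^{-1}(A+D-\alpha\phi^*Z)$ relatively nef over an open set, applies Corollary~\ref{cor:rel min model} (a consequence of weak positivity, Proposition~\ref{prop:base locus}) to get pseudo-effectivity, and reads off the bound on $\alpha$ by intersecting with $A^{\dim X-1}$. This uniform multiplicity bound is the algebraic substitute for the slope estimate you were unable to establish.
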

To prove this theorem, we need the following proposition:
\begin{prop}\label{prop:univ bound}
Let the notation be as in Notation \ref{notation:mor}. 
Let $V$ be a Zariski-open subset of $V(\phi,\Delta)$ and put $U:=\phi^{-1}(V)$. 
Suppose that 
\begin{itemize}
\item $(U,\Delta|_U)$ is klt, and
\item $-(K_{X/Y}+\Delta)$ is nef. 
\end{itemize}
Let $D$ be a $\mathbb Q$-Cartier divisor on $X$ such that 
$D|_U$ is relatively nef over $V$. 
Fix a smooth closed subvariety $Z\subset Y$ that is properly contained in $V$. 
Then there exists a constant $C$ such that  
$$
{\mathrm{mult}}_{\phi^{-1}(Z)}(\Gamma) \leq C
$$
for every $\mathbb Q$-Cartier divisor $\Gamma$ with 
$0\le \Gamma \equiv D-\lambda (K_{X/Y}+\Delta)$ 
for some $0 \le \lambda \in \mathbb Q$. 
Here $\mathrm{mult}_{\phi^{-1}(Z)}(\Gamma)$ 
is the multiplicity of $\Gamma$ along $\phi^{-1}(Z)$. 
\end{prop}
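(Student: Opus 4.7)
The plan is to decompose $\Gamma$ into its horizontal and vertical parts over $V$, reduce the multiplicity along $\phi^{-1}(Z)$ to a multiplicity computation on $Y$, and then bound this via intersection with horizontal multisections that are null against $N:=-(K_{X/Y}+\Delta)$. The existence of such null multisections is where we exploit the klt assumption together with weak positivity of direct images (the circle of ideas underlying Proposition~\ref{prop:base locus}).

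Over $V$ the morphism $\phi$ is flat with irreducible fibres (Remark~\ref{remark:mor}), so every prime divisor on $U:=\phi^{-1}(V)$ is either \emph{horizontal} (dominating $Y$) or of the form $\phi^{*}D_{0}$ for a prime divisor $D_{0}\subseteq V$. Writing $\Gamma|_{U}=\Gamma^{h}+\phi^{*}E$ with $\Gamma^{h}$ horizontal and $E\geq 0$ effective on $V$, no horizontal prime component contains the vertical subvariety $\phi^{-1}(Z)$, and therefore
$$
\mathrm{mult}_{\phi^{-1}(Z)}(\Gamma)\;=\;\mathrm{mult}_{\phi^{-1}(Z)}(\phi^{*}E)\;=\;\mathrm{mult}_{Z}(E).
$$
After cutting $Y$ with general hyperplane sections transverse to $Z$ and using the induced restriction of $\phi$, we may further reduce to the case where $Z$ is of codimension one in $Y$, and fix a general curve $B\subset Y$ with $B\cdot Z>0$ meeting $Z$ at a single general point $z\in Z\cap V$.

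Lifting $B$ to a general horizontal multisection $C\subset\phi^{-1}(B)$ of degree $d$ over $B$ (so that $C\not\subseteq\mathrm{Supp}(\Gamma^{h})$), and using $\Gamma^{h}\cdot C\geq 0$, we obtain
$$
(B\cdot Z)\cdot\mathrm{mult}_{Z}(E)\;\leq\;E\cdot B\;=\;\tfrac{1}{d}\bigl(\phi^{*}E\cdot C\bigr)\;\leq\;\tfrac{1}{d}\bigl(\Gamma\cdot C\bigr)\;=\;\tfrac{1}{d}\bigl(D\cdot C+\lambda\,N\cdot C\bigr).
$$
Since $N\cdot C\geq 0$ by nefness of $N$, the right-hand side is controlled by $\lambda$; if we can arrange $N\cdot C=0$ then the bound becomes $D\cdot C/d$, which is \emph{independent of $\lambda$} and yields the asserted uniform bound.

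The main obstacle is therefore to produce horizontal multisections $C$ over $B$ lying in the null locus of $N$ and passing over $z\in Z$. For this we invoke the weak positivity of the direct image sheaves $\phi_{*}\mathcal O_{X}(m(-K_{X/Y}-\Delta))$ (available under our klt hypothesis on $(X,\Delta)$ and the nefness of $N$), which furnishes, after twisting by pull-backs of suitable ample divisors on $Y$, sufficiently many effective sections of powers of $N$ to cover the fibres of $\phi$ over a neighbourhood of $z$; their base loci carve out the horizontal null curves of $N$ we need. Equivalently, we apply Proposition~\ref{prop:base locus} to the perturbed pair $(X,\Delta+\tfrac{1}{\lambda}\Gamma)$, whose relative log-canonical divisor is $\tfrac{1}{\lambda}D$, to produce horizontal curves meeting $\phi^{-1}(z)$ transversally and annihilating $N$ in the limit $\lambda\to\infty$. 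The delicate point is to check that such null multisections exist even when $-K_F-\Delta|_F$ is big on the general fibre $F$: this forces one to work on an equidimensional model, combine the numerical-dimension data from weak positivity with a base change, and verify that the limiting null curves assemble into a single horizontal multisection. Once this is accomplished, the intersection inequality above completes the proof.
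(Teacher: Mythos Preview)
Your argument has a genuine gap at its crucial step: the existence of horizontal multisections $C$ with $N\cdot C=0$. You acknowledge this is ``the delicate point'' but do not actually establish it, and the methods you invoke do not produce such curves. Weak positivity of $\phi_*\mathcal O_X(mN)$ yields, after twisting by $\phi^*H$, global sections of $mN+\phi^*H$; their zero loci are \emph{divisors}, and slicing them down to horizontal curves gives no control on $N\cdot C$ (when $-K_F-\Delta|_F$ is big on the general fibre, a generic horizontal curve will have $N\cdot C>0$). Your alternative, applying Proposition~\ref{prop:base locus} to $(X,\Delta+\tfrac{1}{\lambda}\Gamma)$, also fails: that proposition requires $(K_{X/Y}+\Delta')|_U$ to be relatively \emph{ample} over $V$, whereas here $K_{X/Y}+\Delta+\tfrac{1}{\lambda}\Gamma\equiv\tfrac{1}{\lambda}D$ is only relatively nef; and even if one could apply it, the conclusion is freeness of $\tfrac{1}{\lambda}D+\phi^*H$ over $U$, which again does not manufacture $N$-null horizontal curves. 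Finally, if $C$ is allowed to depend on $\lambda$ (as ``annihilating $N$ in the limit $\lambda\to\infty$'' suggests), then $D\cdot C/d$ is no longer a fixed quantity and the bound is not uniform.

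The paper avoids the search for null curves entirely. After reducing (by blowing up) to the case where $Z$ is a prime divisor, one writes $\Gamma=\alpha\,\phi^*Z+E$ with $\mathrm{mult}_{\phi^*Z}(E)=0$, fixes an ample $A$ on $X$, and for $m\gg0$ chooses $0\le G\sim_{\mathbb Q}A-mL$ (possible since $-L$ is nef) so that $(U,B|_U)$ is klt for $B:=\Delta+\tfrac{1}{\lambda+m}(E+G)$. A direct computation gives
\[
\tfrac{1}{\lambda+m}\bigl(A+D-\alpha\,\phi^*Z\bigr)\;\equiv\;K_{X/Y}+B,
\]
and $(K_{X/Y}+B)|_U$ is relatively nef over $V$. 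Corollary~\ref{cor:rel min model} then forces $K_{X/Y}+B$, hence $A+D-\alpha\,\phi^*Z$, to be pseudo-effective. Intersecting with $A^{\dim X-1}$ yields
\[
\alpha\;\le\;\frac{(A+D)\cdot A^{\dim X-1}}{\phi^*Z\cdot A^{\dim X-1}},
\]
a constant independent of $\lambda$ and $\Gamma$. The key idea is to absorb the horizontal part of $\Gamma$ into the boundary and use pseudo-effectivity of a log-canonical divisor, rather than hunting for special test curves.
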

Note that it follows from the definition of $V(\phi,\Delta)$ that 
$\phi^{-1}(Z)$ is irreducible. 
\begin{proof}
Let $Y'$ (resp. $X'$) be the blow-up of $Y$ (resp. $X$) along $Z$ (resp. $\phi^{-1}(Z)$), and let $\phi':X'\to Y'$ denote the induced morphism. 
Then we have the following cartesian diagram:
\begin{align*}
\xymatrix{ X \ar[d]_-\phi & X' \ar[l]_-\sigma \ar[d]^-{\phi'}  \\ 
Y & Y'. \ar[l]^-\tau
}
\end{align*}
It is easy to check that the following hold:
\begin{itemize}
\item $\sigma^*(K_{X/Y}+\Delta) \sim K_{X'/Y'}+\Delta'$, where $\Delta':=\sigma^{-1}_*\Delta$. 
\item $(\sigma^{-1}(U),\Delta'|_{\sigma^{-1}(U)})$ and $\tau^{-1}(V)$ 
respectively satisfy the conditions on $(U,\Delta|_U)$ and $V$ in Theorem~\ref{thm:inclusions}.
\end{itemize}
Hence by respectively replacing $\phi:X\to Y$ and $D$ 
with $\phi':X'\to Y'$ and $\sigma^*D$, 
we may assume that $Z$ is a smooth prime divisor on $Y$. 

Put $L:=K_{X/Y}+\Delta$. 
Take an ample $\mathbb Q$-Cartier divisor $A$ on $X$. 
Fix $\lambda\in\mathbb Q_{\ge0}$ and $0\le\Gamma\equiv D-\lambda L$. 
Then $\Gamma$ can be written as 
$\Gamma=\alpha \phi^*Z +E$ with ${\mathrm{mult}}_{\phi^*Z}(E)=0$, 
{where $\alpha:=\mathrm{mult}_{\phi^*Z}(\Gamma)$.}
Since $(U,\Delta|_U)$ is klt, 
the pair $\big(U,(\Delta +(\lambda+m)^{-1}E)|_U \big)$ is also klt for some $m\gg0$. 
Since $A-mL$ is ample, there is $0\le G \sim_{\mathbb Q} A-mL$ such that $\left(U,B|_U\right)$ is klt,
where $B:=\Delta+(\lambda+m)^{-1}(E+G)$. 
Then we have 
\begin{align*}
D-\lambda(K_{X/Y}+\Delta) &\equiv \alpha \phi^*Z +E 
\textup{\quad and \quad} 
A-m(K_{X/Y}+\Delta) \equiv G, 
\end{align*}
and thus we can see that 
\begin{align*}
\frac{1}{\lambda+m}(A+D-\alpha \phi^*Z)
\equiv K_{X/Y}+\Delta+\frac{1}{\lambda+m}(E+G) 
= K_{X/Y}+B. 
\end{align*}
The divisor $\left(K_{X/Y}+B\right)|_U$ is relatively nef over $V$ 
by the assumption, 
and thus it follows that  
$K_{X/Y} +B$ is pseudo-effective from Corollary~\ref{cor:rel min model}. 
This implies that $A+D-\alpha \phi^*Z$ is also pseudo-effective. 
Then we obtain  
\begin{align}
C:=\frac{(A+D)\cdot A^{\cdot \dim X-1}}{\phi^*Z\cdot A^{\cdot \dim X-1}} 
\ge \alpha={\mathrm{mult}_{\phi^*Z}}(\Gamma),
\label{ineq:mult}
\end{align}
which completes the proof. 
\end{proof}
\begin{proof}[Proof of Theorem~\ref{thm:nd}]
The inequality ``$\ge$'' follows from the well-known fact that the numerical Kodaira dimension of a nef $\mathbb Q$-Cartier divisor $D$ on a projective variety $V$ coincides with 
$$
{\mathrm{max}}\left\{\dim W 
\middle| \textup{closed subvariety $W\subseteq V$ with $W\cdot D^{\cdot \dim W} >0$}
\right\}. 
$$
We prove the converse inequality ``$\le$''.  Put $L:=K_{X/Y}+\Delta$. 
Take an ample Cartier divisor $A$ on $X$. 
Let $\mathcal I$ denote the ideal defining $F$ 
and set $\mathcal G_m:=\mathcal I^m/\mathcal I^{m+1}$ 
for each $m\in\mathbb Z_{\ge0}$. 
By the exact sequence
\begin{align*}
0\to \mathcal I^{m+1}(A-lL) \to \mathcal I^m(A-lL) \to \mathcal G_m(A-lL) \to 0, 
\end{align*}
we get 
\begin{align}
h^0(X, \mathcal I^m(A-lL)) 
\le 
h^0(X, \mathcal I^{m+1}(A-lL)) 
+
h^0(F, \mathcal G_m (A-lL)). 
\label{ineq:1}
\end{align} 
Thanks to Proposition~\ref{prop:univ bound}, 
we can find $m_0\in\mathbb Z_{>0}$ such that
$h^0(X,\mathcal I^{m_0}(A-lL)) =0$ for each $l\in\mathbb Z_{\ge0}$, 
and thus from (\ref{ineq:1}) we obtain that 
\begin{align*}
h^0(X, \mathcal O_X(A-lL)) 
\le 
\sum_{0\le m < m_0} h^0(F, \mathcal G_m (A-lL)) 
\end{align*}
for each $l\in\mathbb Z_{\ge0}$. 
Put $\nu:=\mathrm{nd}(-L|_F)$.
Since each $\mathcal G_m$ is a torsion-free $\mathcal O_F$-module, 
it is isomorphic to a subsheaf of the direct sum of ample divisors, 
and thus there is a constant $C$ such that 
\begin{align*}
\frac{h^0(X, \mathcal O_X(A-lL))}{l^\nu}
\le 
\sum_{0\le m < m_0} \frac{h^0(F, \mathcal G_m (A-lL)) }{l^\nu}
\le C. 
\end{align*}
This means that $\mathrm{nd}(-L) \le \nu=\mathrm{nd}(-L|_F)$.
\end{proof}

\section{On asymptotic variants of base loci} \label{section:small B-}

The aim of this section is to prove Theorem \ref{thm:2_intro}. 
For this purpose, we need 
Proposition \ref{r-c10}, Theorem \ref{r-viehweg}, and Theorem \ref{r-thm:locallytrivial},  
which have been essentially proved in the case $-(K_{X/Y}+\Delta)$ 
being nef by \cite{CH19} and \cite{CCM}, 
but we actually need them under the weaker assumption 
that its restricted base locus is not dominant over $Y$.  
In subsection \ref{subsection:semistability}, for reader's convenience, 
we give a proof and an explanation for them with slightly generalized form.

Throughout this section, let $\phi : X \rightarrow Y$ be 
a surjective morphism with connected fiber between projective manifolds. 
Let $\Delta$ be an effective $\mathbb{Q}$-divisor on $X$ 
and fix $N \in \mathbb{Z}_{>0}$ such that $N \Delta$ is a Cartier divisor.

\subsection{Extension of Cao-H\"oring's work for nef anti-canonical divisors} \label{subsection:semistability}

We first begin with the following proposition:

\begin{prop} \label{flatness}
If $(X,  \Delta)$ is an lc pair and 
$\mathbb{B}_{-}(- (K_{X/Y} + \Delta))$ is not dominant over $Y$, 
then $\phi$ is flat and semistable 
$($that is, the fiber $\phi^{-1}(y)$ has the same dimension and it is reduced  for any $y \in Y$$)$. 
\end{prop}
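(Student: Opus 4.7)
The plan is to combine miracle flatness with a reduction to a one-dimensional base, and then to apply the universal multiplicity bound of Proposition~\ref{prop:univ bound} to rule out multiple fibers.

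First, by the non-dominance hypothesis, there is a dense open $V\subseteq Y$ whose preimage avoids $\mathbb{B}_{-}(-(K_{X/Y}+\Delta))$; restricting to a general fiber $F$ then shows $-(K_F+\Delta|_F)$ is nef, and in particular $-(K_{X/Y}+\Delta)$ is pseudo-effective on $X$. Since $X$ is Cohen--Macaulay and $Y$ is regular, miracle flatness reduces the flatness of $\phi$ to equi-dimensionality of all scheme-theoretic fibers.

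Next I would reduce to $\dim Y=1$. Given any $y_0\in Y$, choose a general complete intersection curve $C\subseteq Y$ through $y_0$ and base-change $\phi$ along $C\hookrightarrow Y$, passing to the normalized main component $X_C$ of $X\times_Y C$; by Bertini the pair $(X_C,\Delta_C)$ remains lc for general $C$, and by Lemma~\ref{lem:pullback of base loci}(1) the non-dominance of $\mathbb{B}_{-}(-(K_{X_C/C}+\Delta_C))$ persists. Since equi-dimensionality at $y_0$ and reducedness of $\phi^{-1}(y_0)$ can both be tested on such a slice, it suffices to handle the case $\dim Y=1$, in which case $\phi$ is automatically flat, and only reducedness remains.

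Finally, suppose toward contradiction that some fiber is non-reduced, say $\phi^{*}y_0=mD+E$ with $D$ a prime component of multiplicity $m\ge 2$. I would apply Proposition~\ref{prop:univ bound} with $Z=\{y_0\}$ and the auxiliary divisor chosen to be the vertical prime $D$ itself (which is relatively nef since it is contained in a fiber). This yields a constant $C_0$ bounding $\mathrm{mult}_{\phi^{-1}(y_0)}(\Gamma)$ for every effective $\mathbb{Q}$-Cartier $\Gamma\equiv D-\lambda(K_{X/Y}+\Delta)$ with $\lambda\ge 0$. Using the pseudo-effectivity of $-(K_{X/Y}+\Delta)$ together with the weak positivity of direct images implicit in Proposition~\ref{prop:base locus}, one constructs effective $\Gamma_{\lambda}$ of the required form whose multiplicity along the vertical cycle $\phi^{*}y_0=mD+E$ grows linearly in $\lambda$, contradicting $C_0$. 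The lc condition is essential here to guarantee that the ramification coefficient $m-1$ along $D$ cannot be absorbed into $\mathrm{coeff}_D(\Delta)\le 1$, forcing the excess to concentrate as vertical multiplicity.

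The main obstacle will be this final construction of $\Gamma_{\lambda}$ with controlled numerical class but unbounded multiplicity along $\phi^{-1}(y_0)$; the strategy is to exploit the explicit relation $\phi^{*}y_0=mD+E$ to convert the pseudo-effective class $-\lambda(K_{X/Y}+\Delta)$ into an effective representative whose vertical part grows with $\lambda$, while invoking the lc bound $\mathrm{coeff}_D(\Delta)\le 1$ to prevent this growth from being reabsorbed into the boundary. Once this dichotomy between the boundary $\Delta$ and the vertical ramification is made precise on a suitable log resolution, Proposition~\ref{prop:univ bound} closes the argument.
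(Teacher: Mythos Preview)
Your reduction of flatness to curve slices does not work. When you pass to the normalized main component $X_C$ of $X\times_Y C$, any surjection from an irreducible variety onto a smooth curve is automatically flat, so $X_C\to C$ is equidimensional regardless of whether $\phi$ was. Concretely, if $\phi^{-1}(y_0)$ has an irreducible component $Z$ of excess dimension, then $Z$ sits in $X\times_Y C$ but is typically disjoint from the main component; by discarding it you have erased precisely the obstruction you were trying to detect. Equidimensionality therefore cannot be ``tested on such a slice.'' The paper instead passes through a flattening of $\phi$, compares $K_{X^\flat/Y^\flat}$ with $\pi^*(K_{X/Y}+\Delta)$, and shows via Theorem~\ref{thm:inclusions} that the discrepancy divisor coming from the flattening would force a strict inequality against an ample intersection number; it then runs an induction on $\mathrm{codim}_X(X_y)$ by cutting with hyperplanes on the base.

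For semistability your appeal to Proposition~\ref{prop:univ bound} is doubly problematic. First, that proposition assumes $(U,\Delta|_U)$ is klt and $-(K_{X/Y}+\Delta)$ is nef, whereas here you only have lc and non-dominant $\mathbb B_-$; neither hypothesis is available, and the klt assumption is used in an essential way in its proof. Second, and more seriously, even granting the bound $C_0$, you never construct the effective divisors $\Gamma_\lambda\equiv D-\lambda(K_{X/Y}+\Delta)$ with multiplicity along $\phi^{-1}(y_0)$ growing in $\lambda$. The existence of a multiple fiber $\phi^*y_0=mD+E$ tells you nothing about multiplicities of \emph{mobile} effective divisors in the classes $D-\lambda(K_{X/Y}+\Delta)$; there is no mechanism in your sketch that forces vertical concentration. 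The paper's argument is genuinely different and analytic: after slicing to a curve it builds an $mN$-Bergman metric $H_{mN}$ on $mNG+2A^\sharp$ (with $G$ the $\pi^\sharp$-exceptional discrepancy), invokes the lower bound $\sqrt{-1}\Theta_{H_{mN}}\ge mN(e-1)[\pi^{\sharp-1}_*W]$ for a component $W$ of multiplicity $e\ge 2$, and deduces that $G-(e-1)\pi^{\sharp-1}_*W$ is pseudo-effective, contradicting the exceptionality of $G$. This step is where the multiplicity of the fiber actually enters, and your algebraic substitute does not supply it.
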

\begin{proof}[Proof of flatness]
Our proof follows an argument in \cite{PZ19}  
and the basic idea is the same as in \cite{LTZZ}. 
Let $y\in Y$ be a point such that $X_y$ has the largest dimension among all the closed fibers. 
If $\mathrm{codim}_X(X_y)=0$, then $Y=\{y\}$, and thus $f$ is flat. 
We may assume that $\mathrm{codim}_X(X_y)\ge1$. 
\setcounter{step}{0}
\begin{step}
We first prove the assertion in the case where $\mathrm{codim}_X(X_y)=1$. 
Let $\rho:Y^\flat\to Y$ be a resolution passing through the flattening of $\phi$. 
Let $X^\flat$ be a resolution of the main component of $X\times_Y Y^\flat$. 
We have the following commutative diagram:
\begin{align*}
\xymatrix{
X^\flat \ar[r]^-\pi \ar[d]_-{\phi^\flat} & X \ar[d]^\phi \\
Y^\flat \ar[r]^-\rho & Y.
}
\end{align*}
Then we have: 
\begin{itemize}
\item all ${\phi^\flat}$-exceptional divisors are ${\pi}$-exceptional,
\item the $\rho$-exceptional divisor $D:=K_{Y^{\flat}}-{\rho}^{*}K_{Y}$ on $Y^\flat$ is effective, and 
\item 
$\mathbb{B}_{-}(-{\pi}^{*}(K_{X/Y}  +  \Delta))$
is not dominant over $Y^{\flat}$ 
\end{itemize}
Set $E:=K_{X^{\flat}} +\pi^{-1}_*\Delta -{\pi}^{*}(K_X+\Delta) $.
Then $E$ is $\pi$-exceptional, and 
$$
K_{X^{\flat}/Y^{\flat}}+\pi^{-1}_*\Delta -{\pi}^{*}(K_{X/Y}+\Delta) 
=E -{\phi^\flat}^{*}D.
$$

We show that $K_{X^{\flat}/Y^{\flat}}+\pi^{-1}_*\Delta -{\pi}^{*}(K_{X/Y}+\Delta) $ is pseudo-effective. 
Set $L :=-{\pi}^{*}(K_{X/Y}+\Delta)$ and $M:=K_{X^\flat/Y^\flat}+\pi^{-1}_*\Delta +L$. 
Let $F$ be a very general fiber of $\phi^{\flat}$. 
Then $\mathbb B_-(L)$ does not intersect with $F$.
Hence Theorem~\ref{thm:inclusions}~(1) implies that $\mathbb B_-(M)$ does not intersect $F$, and thus $M$ is pseudo-effective. 

Let $A$ be a very ample divisor on $X$ and $\alpha := {\pi}^{*}A^{\dim X -1}$. 
Since $E$ is ${\pi}$-exceptional, we have
\begin{equation*}
0 \ge -(E -{\phi^\flat}^{*}D) \cdot \alpha =({\phi^\flat}^{*}D)\cdot\alpha.
\end{equation*}
On the other hand, if $\phi$ is not flat at $y$, we see that $\rho(\mathrm{Supp}(D))\ni y$, 
and thus ${\pi}_*{\phi^\flat}^*D \ge (X_y)_{\mathrm{red}}$. 
Then we get $({\phi^\flat}^{*}D)\cdot\alpha >0$, which is a contradiction.

\begin{rem}
The argument in this step works when $K_X+\Delta$ is $\mathbb Q$-Cartier and the non-canonical locus of $(X,\Delta)$ is not dominant over $Y$. 
\end{rem}

\end{step}
\begin{step}
Take $r\ge 2$. 
Assuming to the assertion holds if $\mathrm{codim}_X(X_y)<r$, 
we prove it in the case when $\mathrm{codim}_X(X_y)=r$. 
Let $Z\subseteq Y$ be the closed subset consisting of points over which $\phi$ is not flat, 
and $Z'\subseteq Z$ is the Zariski-closure of $Z\setminus\{y\}$
Let $H_0$ be a very ample divisor on $Y$
and $\mathfrak d \subseteq |H_0|$ be the linear system of members containing $y$.
Take a very general member $H\in\mathfrak d$. 
Then $H$ is smooth and $H\cap Z''$ is properly contained in $Z''$ for 
each irreducible component $Z''$ of $Z'$.
This implies that $\phi^{-1}(H)\cap W $ is also properly contained in $W$ 
for each irreducible component $W$ of $\phi^{-1}(Z')$, 
and thus we have 
$$
\dim (\phi^{-1}(H\cap Z'))
\le\dim \phi^{-1}(Z') -1
\le\dim X-2.
$$
Then it follows that $\dim (\phi^{-1}(H\cap Z))\le \dim X-2$ 
from $Z=Z'\cup\{y\}$ and $\mathrm{codim}_X(X_y)\ge2$. 
Let $G$ be an irreducible component of $\phi^{-1}(H)$.
Then $\dim G=\dim X-1$, and thus $G\not\subseteq \phi^{-1}(Z)$.
Hence $\phi$ is flat at a point in $G$,
and thus so is the induced morphism $\psi:\phi^{-1}(H)\to H$.
In particular $G$ dominates $H$, 
from which we see that $\phi^{-1}(H)$ is irreducible, 
since a general fiber of $\psi$ is a smooth variety. 
We also see that $\phi^*H=G$ as divisors. 
Let $\nu:X'\to G$ be the normalization and 
$C\ge0$ be the Weil divisor on $X'$ corresponding the conductor ideal. 
Then $\mathrm{Supp}(C)$ is not dominant over $Y$. 
Put $\Delta':=C+\nu^*(\Delta|_G)$. 
Then we have 
\begin{align*}
K_{X'/H}+\Delta'
& \sim_{\mathbb Q} \nu^*(K_G +\Delta|_G -\psi^*K_H) 
\\ & \sim_{\mathbb Q} \nu^*\left(K_X+G+\Delta -\phi^*(K_Y+H)\right) |_G
\sim_{\mathbb Q} \nu^*\left(K_{X/Y}+\Delta\right)|_G, 
\end{align*}
and thus we can see that $\phi\left(\mathbb B_-(-(K_{X'/H}+\Delta'))\right)$ 
is properly contained in $ H$. 
Since $\mathfrak d$ is free on $Y\setminus\{y\}$, 
the non-lc locus of $(G, \Delta|_G)$ is not dominant over $Y$, 
so the same property holds for $(X', \Delta')$. 
Then, by the assumption, $X'$ is flat over $H$, we obtain 
$$
\mathrm{codim}_X(X_y)-1
=\mathrm{codim}_{X'}(X'_y) 
=\dim H
=\dim Y -1, 
$$ 
and thus $\mathrm{codim}_X(X_y)=\dim Y$. This means that $\phi$ is flat. 
\end{step}
\end{proof}

\begin{proof}[Proof of semistability]
Assume there exists a point $y\in Y$ such that $\phi^{-1}(y)$ is not reduced. 
We can take a smooth curve $C$ in $Y$ such that 
$y \in C$ and $\mathbb{B}_{-} (-(K_{X/Y} + \Delta) )$ is not dominant over $C$.
Set $T:=\phi^{-1}(C) $ and $\Delta_T := \Delta |_{T}$.
By the assumption, 
there exists a prime divisor $W$ on $T$ such that $e : = \text{ord}_{W}({\phi|_{T}}^{*}(y)) >1$.
Take $\pi^\sharp : T^\sharp \rightarrow T$ be a log resolution of $(T,\Delta_T)$ and 
set $\phi^\sharp : = \phi|_{T} \circ \pi^\sharp $. 
Then we have the following diagram: 
\begin{equation*}
\xymatrix@C=40pt@R=30pt{
T^{\sharp}\ar[r]^{\pi^\sharp} \ar[d]^{\phi^\sharp }&T\ar@{}[r]|*{\subseteq} \ar[d]^{\phi|_{T}}& X  \ar[d]^{\phi} \\  
C\ar@{=}[r]  & C \ar@{}[r]|*{\subseteq}  & Y. 
}
\end{equation*}
Let $\Delta^{\sharp} $ and $G$ be the effective divisors  
such that 
$K_{T^{\sharp}} + \Delta^{\sharp} = {\pi^\sharp}^{*}(K_{T} + \Delta_T) +G$.
Then $(T^{\sharp},\Delta^{\sharp} )$ is log canonical, 
the divisor $G$ is $\pi^\sharp$-exceptional,  
and $\mathbb{B}_{-}( -{\pi^{\sharp} }^{*}(K_{T/C} + \Delta_T) )$ is not dominant over $C$. 

Now we define an $mN$-Bergman metric $H_{mN}$ on $mNG+2A^{\sharp}$ 
for any $m \in \mathbb{Z}_{>0}$ and 
an ample divisor $A^{\sharp}$ on $T^{\sharp}$ such that $N \Delta^{\sharp} +A^\sharp$ is ample. 
We consider the equality 
\begin{align*}
&mNG+2A^{\sharp} \\
=& mNK_{T^{\sharp}/C}+\overbrace{(-mN{\pi^{\sharp} }^{*}(K_{T/C} +\Delta_T)  + A^{\sharp})}^{\textup{with $h_1$}}+
\overbrace{(N\Delta^{\sharp} + A^{\sharp})}^{\textup{with $h_2$}}+
\overbrace{mN(1-1/m)\Delta^{\sharp}}^{\textup{with $h_3^{mN}$}}.
\end{align*}
and the singular hermitian metrics $h_1$, $h_2$, and $h_3$ with semipositive curvature 
satisfying the following: 
\begin{itemize} 
\item  $h_1$ is a singular hermitian metric on $-mN{\pi^{\sharp} }^{*}(K_{T/C} + \Delta_T)  + A^{\sharp}$ 
such that $h_1|_{T^{\sharp}_{w}}$ is smooth on $T^{\sharp}_{w}$ for a general point $w \in C$, 
which is obtained from the property that 
$\mathbb{B}_{-}( -{\pi^{\sharp} }^{*}(K_{T/C} + \Delta_T) )$ is not dominant over $C$. 

\item $h_2$ is a smooth hermitian metric on the ample divisor $ N \Delta^{\sharp} +A^\sharp$ 
with positive curvature.  

\item $h_3$ is a singular hermitian metric on $(1-1/m)\Delta^{\sharp}$ 
such that $\mathcal{J}(h_3 |_{T^{\sharp}_{w}}) = \mathcal{O}_{T^{\sharp}_{w}}$ holds 
for a general point $w \in C$, which is obtained from the klt condition of $(T,  (1-1/m)\Delta^{\sharp})$. 
\end{itemize}


Then the singular hermitian metric $h := h_{1} h_{2} h_{3}^{mN}$ has semipositive curvature current and 
satisfies that $\mathcal{J}(h^{\frac{1}{mN}} |_{T^{\sharp}_{w}}) = \mathcal{O}_{T^{\sharp}_{w}}$ 
for a general point $w \in C$.  
Then, by ${\phi^\sharp}_{*}(2A^{\sharp} + mNG) \neq 0$ and the above properties, 
we can see that the induced $mN$-Bergman metric $H_{mN}$ on $2A^{\sharp} + mNG$ 
has  semipositive curvature (see \cite[A.2]{BP}). 
Then, by \cite[Remark 2.5]{CP17} or \cite[Proposition 2.5]{Wang},
we obtain 
$$
\sqrt{-1}\Theta_{H_{mN}} \ge mN(e-1)[{\pi^{\sharp}}^{-1}_{*}W].
$$
This means that $(2A^{\sharp} + mNG) - mN(e-1){\pi^{\sharp}}^{-1}_{*}W$ is pseudo-effective 
for any $m \in \mathbb{Z}_{>0}$, 
and thus so is $G-(e-1){\pi^{\sharp}}^{-1}_{*}W$. 
This is a contradiction to the fact that $G$ is $\pi^\sharp$-exceptional if $e>1$. 
\end{proof}

The proof of the following proposition is essentially the same as in \cite[Section 4.3]{CCM}. 

\begin{prop}$($cf. \cite[Section 4.3]{CCM}$)$
\label{nonvert}
Under the same assumption as in Proposition \ref{flatness}, 
the vertical part of $\Delta$ is 0.
\end{prop}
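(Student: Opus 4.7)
The plan is to argue by contradiction, mimicking the $m$-Bergman metric computation in the proof of Proposition~\ref{flatness}. Suppose $\Delta_{\mathrm{vert}}\neq 0$ and fix a prime component $W\subseteq\mathrm{Supp}(\Delta_{\mathrm{vert}})$ of coefficient $c\in(0,1]$ in $\Delta$; note $c\le 1$ since $(X,\Delta)$ is lc. Set $Z:=\phi(W)\subsetneq Y$. First I would choose a general smooth curve $C\subset Y$ passing through some point $y\in Z$ so that $\mathbb{B}_{-}(-(K_{X/Y}+\Delta))$ is not dominant over $C$, the pair $(T,\Delta|_T)$ is lc (with $T:=\phi^{-1}(C)$), and $\phi|_T\colon T\to C$ is semistable (all of this is possible by Proposition~\ref{flatness}). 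Since $W$ is vertical and $\phi|_T$ is semistable, $W\cap T$ is a nonzero union of reduced components of the fiber $(\phi|_T)^{-1}(y)$.

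Next I would take a log resolution $\pi^\sharp\colon T^\sharp\to T$ of $(T,\Delta|_T)$ and write $K_{T^\sharp}+\Delta^\sharp=\pi^{\sharp *}(K_T+\Delta|_T)+G$ with $G\ge 0$ being $\pi^\sharp$-exceptional and all coefficients of $\Delta^\sharp$ lying in $[0,1]$. Set $\phi^\sharp:=\phi|_T\circ\pi^\sharp$, and denote by $W^\sharp:={\pi^\sharp}^{-1}_{*}(W\cap T)$ the strict transform; by construction $W^\sharp$ appears in $\Delta^\sharp$ with coefficient $c$ and has multiplicity one in $\phi^{\sharp *}[y]$. I would then set up the verbatim decomposition used in the proof of Proposition~\ref{flatness},
\begin{align*}
mNG+2A^\sharp &= mNK_{T^\sharp/C}+\bigl(-mN\pi^{\sharp *}(K_{T/C}+\Delta|_T)+A^\sharp\bigr) \\
&\qquad +(N\Delta^\sharp+A^\sharp)+mN(1-1/m)\Delta^\sharp,
\end{align*}
and build the singular hermitian metrics $h_1,h_2,h_3$ on the three summands as in that proof: $h_1$ comes from the non-dominance of $\mathbb{B}_{-}(-(K_{X/Y}+\Delta))$, $h_2$ is smooth positive since $N\Delta^\sharp+A^\sharp$ is ample, and $h_3$ is built from the klt structure of $(T^\sharp,(1-1/m)\Delta^\sharp)$. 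Since $\phi^\sharp_{*}(mNG+2A^\sharp)\ne 0$ and $\mathcal{J}(h^{1/mN}|_{T^\sharp_w})=\mathcal{O}_{T^\sharp_w}$ for general $w\in C$, the induced $m$-Bergman metric $H_{mN}$ on $mNG+2A^\sharp$ has semipositive curvature current.

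The critical input, and the point I expect to be the main technical obstacle, is to upgrade the Cao--P\u aun/Wang estimate (\cite[Remark~2.5]{CP17}, \cite[Proposition~2.5]{Wang}) so that the pole of order $(m-1)Nc$ of $h_3^{mN}$ along the reduced vertical divisor $W^\sharp$ is converted, through the Ohsawa--Takegoshi extension underlying the Bergman construction, into a curvature estimate
\[
\sqrt{-1}\,\Theta_{H_{mN}}\ \ge\ mNc\,[W^\sharp].
\]
This plays, for a reduced fiber component carrying a fractional lc-boundary of coefficient $c$, the role of the $(e-1)$-estimate used in Proposition~\ref{flatness} for a non-reduced fiber without boundary. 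Granting the estimate, the class $mNG+2A^\sharp-mNcW^\sharp$ is pseudo-effective for every $m$; dividing by $mN$ and passing to the limit $m\to\infty$ shows that $G-cW^\sharp$ is pseudo-effective on $T^\sharp$. Pushing forward by $\pi^\sharp$ then yields $\pi^\sharp_{*}(G-cW^\sharp)=-c(W\cap T)$, a nonzero anti-effective divisor on the projective variety $T$ that is pseudo-effective -- an absurdity since $c>0$ and $W\cap T\neq 0$. This contradicts $\Delta_{\mathrm{vert}}\neq 0$, completing the proof.
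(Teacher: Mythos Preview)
Your approach rests on a curvature estimate that you yourself flag as the main obstacle and do not prove. The Cao--P\u{a}un/Wang lower bound you cite converts the \emph{multiplicity} $e>1$ of a fiber component into the term $mN(e-1)[W^\sharp]$; it says nothing about how the Lelong numbers of the twist metric $h$ along a \emph{reduced} fiber component feed into $\Theta_{H_{mN}}$. A local computation suggests your hoped-for inequality holds with the constant $(m-1)Nc$ rather than $mNc$ (matching your own pole count), and that would still suffice in the limit --- but it is a separate statement requiring its own argument, not an ``upgrade'' of the cited results.

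More to the point, this difficulty is unnecessary. The paper's proof never restricts to a curve, never invokes the refined estimate, and uses only the \emph{basic} semipositivity of the Bergman metric. The idea you are missing is to split $\Delta=\Delta^{h}+\Delta^{v}$ into horizontal and vertical parts first and keep only the horizontal part in the weight, moving the vertical part to the left-hand side:
\[
-mN\Delta^{v}+2A \;=\; mNK_{X/Y}+\bigl(-mN(K_{X/Y}+\Delta)+A\bigr)+(N\Delta^{h}+A)+mN(1-1/m)\Delta^{h}.
\]
Now the weight term $(1-1/m)\Delta^{h}$ restricts to a klt divisor on a general fiber, so the usual $mN$-Bergman metric on the right is semipositively curved --- no lower bound along any specific fiber is needed. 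This shows $-mN\Delta^{v}+2A$ is pseudo-effective for every $m$, hence $-\Delta^{v}$ is pseudo-effective, hence $\Delta^{v}=0$. Separating horizontal from vertical at the outset is the whole trick; by leaving $\Delta^{v}$ inside the weight you are forced to extract it back out through a curvature estimate that the simpler decomposition renders superfluous.
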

\begin{proof}
Let $\Delta^{v}$ (resp. $\Delta^{h}$) be the vertical part (resp. the horizontal part) of $\Delta$.
We take an ample divisor $A$ of $X$ such that  $N\Delta^{h} +A$ is ample.
It is sufficient to prove that $-mN\Delta^{v} +2A$
is pseudo-effective for any $m \in \mathbb{Z}_{>0}$.
By the same way as in the proof of Proposition \ref{flatness}, 
we can see that $-mN\Delta^{v} +2A$ is pseudo-effective 
since we have 
$$
-mN\Delta^{v} +2A = mNK_{X/Y} +(-mN(K_{X/Y} + \Delta) +A )+(N\Delta^{h} +A) + mN(1-1/m)\Delta^{h}. 
$$

\end{proof}

As we confirmed in the proof of Proposition \ref{flatness}, 
the assumption of $-(K_{X/Y}+\Delta)$ being nef  
can be replaced with our weaker assumption that 
$\mathbb{B}_{-}(- (K_{X/Y} + \Delta) )$ is not dominant over $Y$ 
in the situation where we use positivity of direct images. 
Hence, by the same proof as in \cite{CH19, CCM}, 
we can obtain Proposition \ref{r-c10}, Theorem \ref{r-viehweg} and \ref{r-thm:locallytrivial} 
by paying attention to the flatness and semistability of $\phi$. 
For this reason, we put the proof of them in Appendix.

\begin{prop}[{cf. \cite[2.8 Proposition]{CH19}, \cite[Theorem 2.2(1)]{CCM}}]
\label{r-c10}
Let $L$ be a $\phi$-big divisor on $X$ 
admitting a singular hermitian metric $h_L$ 
such that $\sqrt{-1}\Theta_{h_L}\ge \phi^{*}\theta$ 
for some smooth $(1,1)$-form $\theta$ on $Y$. 
If $(X, \Delta)$ is a klt pair and 
$\mathbb{B}_{-}(- (K_{X/Y} + \Delta))$ is not dominant over $Y$, 
then for any $\varepsilon >0$ and any $p,q \in \mathbb{Z}$ 
with $\phi_{*}(pK_{X/Y} + q N\Delta +L)\neq 0$,
there exists a singular hermitian metric $H$ on $\phi_{*}(pK_{X/Y} + q N\Delta +L)$ such that
$$
\sqrt{-1}\Theta_{H} \succeq -\varepsilon\omega_{Y} \otimes \id +(1-\varepsilon ) \theta \otimes \id.
$$
Here $\omega_{Y}$ is a K\"ahler form on $Y$.

In particular, if $L$ is a pseudo-effective and $\phi$-big divisor of $X$, 
then $\phi_{*}(pK_{X/Y} + q N\Delta +L)$ is weakly positively curved.
\end{prop}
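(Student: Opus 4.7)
The plan is to adapt the proof of \cite[Theorem 2.2(1)]{CCM} (which itself refines \cite[2.8 Proposition]{CH19}), replacing the nefness of $-(K_{X/Y}+\Delta)$ by the weaker horizontal hypothesis that $\mathbb B_-(-(K_{X/Y}+\Delta))$ is not dominant over $Y$. Concretely, the statement is obtained by equipping $pK_{X/Y}+qN\Delta+L$ with a well-chosen singular hermitian metric and then invoking a Berndtsson-Paun / Paun-Takayama type theorem for direct images of log relative canonical bundles (as in \cite{PT18,HPS18}).

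First I would rewrite
$$pK_{X/Y}+qN\Delta+L=K_{X/Y}+N\Delta+M,\qquad M:=(p-1)K_{X/Y}+(q-1)N\Delta+L,$$
so that the log direct-image theorem applies to $\phi_*(K_{X/Y}+N\Delta+M)$ once $M$ carries a singular hermitian metric $h_M$ with semipositive curvature whose multiplier ideal sheaf is trivial on a general fiber. Next I would construct $h_M$ using the hypothesis: for every $\delta>0$ and every K\"ahler class on $X$, the non-domination of $\mathbb B_-(-(K_{X/Y}+\Delta))$ produces a singular hermitian metric $g_\delta$ on $-(K_{X/Y}+\Delta)+\delta\omega_X$ with semipositive curvature and with zero Lelong numbers along a general fiber $X_y$. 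Assuming first $p\ge 1$, I would decompose
$$M=-(p-1)(K_{X/Y}+\Delta)+(p-1)\Delta+(q-1)N\Delta+L$$
and set $h_M:=g_\delta^{\,p-1}\cdot h_\Delta^{\otimes c}\cdot h_L$, where $h_\Delta^{\otimes c}$ is a scaled metric built from the defining section of $\Delta$ whose klt exponent stays strictly less than $1$. Then $\sqrt{-1}\Theta_{h_M}\succeq \phi^*\theta-\delta'\omega_X$, and the restriction of the associated multiplier ideal sheaf to a general fiber is $\mathcal O_{X_y}$, thanks to the klt condition on $(X,\Delta)$ combined with the zero-Lelong property of $g_\delta$. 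The case $p<1$ (or $q<1$) is handled symmetrically by absorbing the negative multiples of $K_{X/Y}$ (or $\Delta$) into the $\phi$-big divisor $L$, using the fibrewise positivity of $L$ to soak up an ample remainder.

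Plugging $h_M$ into the log Paun-Takayama direct-image theorem produces a singular hermitian metric $H$ on $\phi_*(pK_{X/Y}+qN\Delta+L)$ whose curvature is bounded below by $(1-\delta')\theta-\delta''\omega_Y$ for small constants $\delta',\delta''$ controlled by $\delta$; taking $\delta$ sufficiently small yields the asserted estimate
$$\sqrt{-1}\Theta_H\succeq -\varepsilon\omega_Y\otimes \id+(1-\varepsilon)\theta\otimes \id.$$
The ``in particular'' clause follows by specialisation: if $L$ is pseudo-effective, a sequence of metrics $h_L$ with curvature tending to $0$ can be chosen, so $\theta$ can be taken arbitrarily small and weak positive curvature of $\phi_*(pK_{X/Y}+qN\Delta+L)$ follows.

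The main technical obstacle will be verifying that the constructed $h_M$ has trivial multiplier ideal sheaf along a general fiber, which is exactly what licences the Berndtsson-Paun machinery to output a metric on the direct image with the desired curvature control. This rests on two points: (i) the generic fibrewise triviality of the Lelong numbers of $g_\delta$, a direct consequence of non-domination of $\mathbb B_-(-(K_{X/Y}+\Delta))$; and (ii) the compatibility of the klt structure on $(X,\Delta)$ with the small fractional perturbation $h_\Delta^{\otimes c}$, which requires a H\"older-inequality argument of the kind routinely used in \cite{BP08,PT18,CCM}. Once this technical core is in place, matching the numerical losses to the prescribed $\varepsilon$ is an elementary rescaling.
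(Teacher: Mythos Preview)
Your overall strategy---construct a singular metric on the twist and feed it into a P\u{a}un--Takayama type direct-image theorem---is the same as the paper's, but your concrete decomposition has two gaps that the paper's proof is specifically designed to avoid.

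First, the multiplier-ideal check fails for your decomposition. Writing the bundle as $K_{X/Y}+N\Delta+M$ and equipping $M$ with $h_M=g_\delta^{\,p-1}\cdot h_\Delta^{\otimes c}\cdot h_L$ forces $c=(p-1)+(q-1)N$, so the total $\Delta$-contribution in the metric on $N\Delta+M$ has coefficient $qN+(p-1)$. For general $p,q\in\mathbb Z$ this is far above the klt threshold, and your assertion that ``the klt exponent stays strictly less than $1$'' is simply not true. The paper circumvents this by introducing a large auxiliary integer $m$ and writing
\[
pK_{X/Y}+qN\Delta+L=(mN+p)K_{X/Y}+\bigl(-mN(K_{X/Y}+\Delta)\bigr)+(m+q)N\Delta+L,
\]
so that one applies the $(mN+p)$-Bergman construction and the relevant condition is $\mathcal J\bigl(h^{1/(mN+p)}|_{X_y}\bigr)=\mathcal O_{X_y}$; the $\Delta$-coefficient becomes $(m+q)N/(mN+p)$, which tends to $1$ as $m\to\infty$ and therefore stays klt. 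The large $m$ is not cosmetic: it is what makes the argument uniform in $p$ and $q$.

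Second, you never use the $\phi$-bigness of $L$, and without it your curvature estimate $\sqrt{-1}\Theta_{h_M}\succeq\phi^*\theta-\delta'\omega_X$ is not bounded below by a form pulled back from $Y$ (indeed it is not even semipositive, contradicting what you require of $h_M$ two lines earlier). The paper uses a second metric $H_L$ on $L$, furnished by $\phi$-bigness, with $\sqrt{-1}\Theta_{H_L}+k\phi^*\omega_Y\ge\delta\omega_X$, and takes the interpolation $h_L^{1-\varepsilon}H_L^{\varepsilon}$. The small strictly positive term $\varepsilon\delta\omega_X$ then absorbs the $-\varepsilon'm\omega_X$ coming from $g_{\varepsilon'}^{\,m}$ (choosing $\varepsilon'\ll\varepsilon$), leaving the curvature bounded below by $(1-\varepsilon)\phi^*\theta-\varepsilon k\phi^*\omega_Y$, which \emph{is} a pullback and feeds directly into the direct-image machinery to produce the stated bound on $\sqrt{-1}\Theta_H$. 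Your hand-waved absorption ``using the fibrewise positivity of $L$'' in the $p<1$ case is a gesture toward this, but it is needed in every case, not just that one.
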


\begin{theo}[\text{cf. \cite[3.5 Lemma]{CH19}, \cite[Proposition 3.15]{Cao19}, \cite[Proposition 3.5]{CCM}}]
\label{r-viehweg}
Let $L$ be a $\phi$-big divisor on $X$ and $r$ be the rank of $\phi_{*}L$.
If $(X, \Delta)$ is a klt pair and 
$\mathbb{B}_{-}(- (K_{X/Y} + \Delta))$ is not dominant over $Y$, 
then there exists a $\phi$-exceptional effective divisor $E$ on $X$ such that 
$$L + E - \frac{1}{r}\phi^{*}(\det (\phi_{*}L))$$
is pseudo-effective.
Moreover $E$ can be taken as the zero divisor if $\phi_{*}L$ is locally free.
\end{theo}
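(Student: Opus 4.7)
The plan is to follow the classical Viehweg fiber-product strategy, using Proposition~\ref{r-c10} as the source of metric positivity, in place of the nefness of $-(K_{X/Y}+\Delta)$ used in \cite{CH19,CCM}. Since Proposition~\ref{r-c10} already accommodates the weaker hypothesis that $\mathbb{B}_{-}(-(K_{X/Y}+\Delta))$ is not dominant over $Y$, the bulk of the argument parallels the references; the point is to verify that the fiber-product construction does not lose this weaker hypothesis.

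First I would reduce to the case where $\mathcal{E}:=\phi_{*}L$ is locally free, which will simultaneously justify the ``moreover'' clause. Take a birational modification $\tau:Y'\to Y$ that flattens $\mathcal{E}$ (so $\tau^{*}\mathcal{E}/(\text{torsion})$ is locally free), let $\pi:X'\to X$ resolve the main component of $X\times_{Y}Y'$, and write $\pi^{*}(K_{X/Y}+\Delta)=K_{X'/Y'}+\Delta'-E_{1}$ with $\Delta'$ effective klt and $E_{1}$ effective $\pi$-exceptional. The non-dominance of $\mathbb{B}_{-}(-(K_{X'/Y'}+\Delta'))$ over $Y'$ follows from the hypothesis by Lemma~\ref{lem:pullback of base loci}, and all discrepancies/base-change corrections between $\pi^{*}L$ and $\phi'^{*}\tau^{*}\mathcal{E}$ are swept into a $\phi$-exceptional $E$ in the conclusion. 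Next, assuming $\mathcal{E}$ locally free of rank $r$, form the $r$-fold fibre product $X^{(r)}=X\times_{Y}\cdots\times_{Y}X$ with projection $\phi^{(r)}$, set $L^{(r)}:=\sum_{i=1}^{r}\pr_{i}^{*}L$, and take a log resolution $\rho:\widehat{X}\to X^{(r)}$. By Propositions~\ref{flatness} and~\ref{nonvert}, $\phi$ is flat, semistable, and $\Delta$ is horizontal over the open set $V\subseteq Y$ avoiding $\phi(\mathbb{B}_{-}(-(K_{X/Y}+\Delta)))$; this ensures that $X^{(r)}$ is Gorenstein along the generic points of $V$-fibres and a natural pair $(\widehat{X},\widehat{\Delta})$ with $\mathbb{B}_{-}(-(K_{\widehat{X}/Y}+\widehat{\Delta}))$ not dominating $Y$ can be constructed. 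Apply Proposition~\ref{r-c10} to $\widehat{\phi}:\widehat{X}\to Y$ and $\widehat{L}:=\rho^{*}L^{(r)}$ to conclude that $\widehat{\phi}_{*}\widehat{L}\cong\phi^{(r)}_{*}L^{(r)}\cong \mathcal{E}^{\otimes r}$ (the last isomorphism being valid on the flat locus) is weakly positively curved.

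To extract the conclusion, use the antisymmetrization inclusion $\det\mathcal{E}\hookrightarrow\mathcal{E}^{\otimes r}$, which under the generic evaluation $(\phi^{(r)})^{*}\mathcal{E}^{\otimes r}\twoheadrightarrow L^{(r)}$ produces an effective divisor $D^{(r)}\sim L^{(r)}-(\phi^{(r)})^{*}\det\mathcal{E}$ on $\widehat{X}$, modulo a $\widehat{\phi}$-exceptional divisor. Averaging over the $r$ projections to $X$, so that $L^{(r)}|_{\text{generic fiber}}$ descends to $rL$, and pushing forward via $\pr_{1}$ after taking the appropriate $r$-th root, gives $rL+rE-\phi^{*}\det\mathcal{E}$ pseudo-effective for an effective $\phi$-exceptional $E$, hence the claim after dividing by $r$ in $N^{1}(X)_{\mathbb{R}}$. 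The main obstacle is verifying that the klt property and the non-dominance of $\mathbb{B}_{-}(-(K_{X/Y}+\Delta))$ transfer across the resolution of the $r$-fold fiber product; semistability of $\phi$, provided by Proposition~\ref{flatness}, is exactly what prevents $X^{(r)}$ from acquiring singularities worse than what a log resolution can absorb into $\widehat{\Delta}$ while preserving the klt condition over $V$. Once this technical transfer is in place, the argument proceeds verbatim as in \cite[Lemma~3.5]{CH19} and \cite[Proposition~3.5]{CCM}, with Proposition~\ref{r-c10} serving as the positivity input in place of their nefness hypothesis.
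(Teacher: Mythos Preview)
Your fiber-product setup and the use of the inclusion $\det\mathcal{E}\hookrightarrow\mathcal{E}^{\otimes r}$ to produce an effective divisor $D^{(r)}\sim L^{(r)}-(\phi^{(r)})^{*}\det\mathcal{E}$ on (a resolution of) $X^{(r)}$ agree with the paper. The serious gap is the extraction step. Your phrase ``averaging over the $r$ projections'' and ``pushing forward via $\pr_{1}$ after taking the appropriate $r$-th root'' does not describe a valid operation: $\pr_{1}:X^{(r)}\to X$ has positive-dimensional fibers, so the push-forward of an effective divisor is not a divisor, and there is no ``$r$-th root'' available on $X$ that converts $D^{(r)}$ into something $\sim rL+rE-\phi^{*}\det\mathcal{E}$. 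Likewise, applying Proposition~\ref{r-c10} on $\widehat{X}$ to deduce that $\mathcal{E}^{\otimes r}$ is weakly positively curved is a true statement but does not by itself yield pseudo-effectivity of $L+E-\frac{1}{r}\phi^{*}\det\mathcal{E}$ on $X$; you never use that positivity after asserting it.

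What the paper actually does to pass from $X^{(r)}$ back to $X$ is to restrict to the \emph{diagonal} $j:X\setminus Z\hookrightarrow X^{(r)}$ (where $Z$ is the codimension-$\ge 2$ locus over which $\phi$ fails to be smooth, cf.\ Proposition~\ref{flatness}), since $j^{*}L^{(r)}=rL$ and $j^{*}(\phi^{(r)})^{*}\det\mathcal{E}=\phi^{*}\det\mathcal{E}$. The substantive difficulty is that the effective divisor $L':=L_{r}+E_{r}-(\phi^{(r)})^{*}\det(\phi_{*}L)$ might contain the diagonal. The paper overcomes this by proving an extension claim: there is an ample $A_{Y}$ on $Y$ such that
\[
H^{0}\bigl(X^{(r)}, mpD_{1}+pL'+(\phi^{(r)})^{*}A_{Y}\bigr)\longrightarrow H^{0}\bigl(X^{(r)}_{y}, pL'+(\phi^{(r)})^{*}A_{Y}\big|_{X^{(r)}_{y}}\bigr)
\]
is surjective for all $p>0$ and general $y$, established via \cite[Theorem~2.10]{Cao19} with an explicitly built metric (this is where the hypothesis on $\mathbb{B}_{-}(-(K_{X/Y}+\Delta))$ enters, exactly as you anticipated). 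Chasing the diagram with the diagonal then produces nonzero sections of $prL+pE-p\phi^{*}\det(\phi_{*}L)+\phi^{*}A_{Y}$ on $X$, and letting $p\to\infty$ gives pseudo-effectivity. Your sketch omits this diagonal-plus-extension mechanism entirely, which is the heart of the proof. Finally, your reduction to the locally free case by flattening $Y$ is workable but unnecessary; the paper instead invokes \cite[III, Lemma~5.10]{Nak} to write $\bigl(\phi^{(r)}_{*}L_{r}\bigr)^{\vee\vee}=\phi^{(r)}_{*}(L_{r}+E_{r})$ for a $\phi^{(r)}$-exceptional $E_{r}$, and this $E_{r}$ (vanishing when $\phi_{*}L$ is locally free) is the source of the $E$ in the statement.
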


\begin{theo}[\text{cf. \cite[Theorem 1.3]{CCM}, \cite[Remarque 2.10]{Cao}}]

\label{r-thm:locallytrivial}
If $(X, \Delta)$ is a klt pair and
$\mathbb{B}_{-}(- (K_{X/Y} + \Delta))$ is not dominant over $Y$, 
then $\phi$ is locally trivial with respect to $(X, \Delta)$.
Further the fiber product 
$(\tilde{X},  \tilde{\Delta}) := (X,\Delta) \times_{Y} \tilde{Y}$
by the universal cover $\pi : \tilde{Y} \rightarrow Y$ 
admits the following splitting:
$$
(\tilde{X},  \tilde{\Delta})  \cong \tilde{Y} \times (F, \Delta|_{F})
$$
where $F$ is the fibre of $\phi$. 
Moreover there exists a homomorphism $\rho: \pi_1(Y) \to \Aut{F}$ 
such that $X$ is isomorphic to the quotient of $\tilde{Y} \times F $ 
by the action of $\pi_1(Y)$ and $\rho$. 
\end{theo}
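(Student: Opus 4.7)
The plan is to follow the template of Cao--H\"oring \cite{CH19} and Campana--Cao--Matsumura \cite{CCM}, substituting the extended positivity results of this section (Propositions~\ref{flatness},~\ref{nonvert},~\ref{r-c10} and Theorem~\ref{r-viehweg}) wherever the original arguments invoked nefness of $-(K_{X/Y}+\Delta)$. Since those statements have been upgraded so as to require only that $\mathbb{B}_{-}(-(K_{X/Y}+\Delta))$ is not dominant over~$Y$, the substitution should go through once one checks that the metric regularity arguments of \cite{CCM} remain valid under this weaker hypothesis. The final output will be a simultaneous \'etale trivialization of enough relative direct image bundles to produce a product structure on $\widetilde{X}$ after base change to the universal cover.

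\textbf{From positivity to hermitian flatness.} By Proposition~\ref{flatness}, $\phi$ is flat and semistable, and by Proposition~\ref{nonvert} the divisor $\Delta$ has no $\phi$-vertical component; hence every fiber of $(X,\Delta)$ carries the same Hilbert polynomial. Fix a $\phi$-ample divisor $A$ and put $\mathcal{F}_m := \phi_*\mathcal{O}_X(mA)$ for $m\gg 0$, which is locally free by flatness. Proposition~\ref{r-c10} applied with $L=mA$ furnishes, for every $\varepsilon>0$, a singular hermitian metric on $\mathcal{F}_m$ of curvature $\succeq -\varepsilon\omega_Y\otimes\id$, so in particular $\det\mathcal{F}_m$ is pseudo-effective. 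On the other hand, Theorem~\ref{r-viehweg} (with $E=0$ thanks to flatness) shows that $mA-\tfrac{1}{r}\phi^*\det\mathcal{F}_m$ is pseudo-effective, and intersecting with curves forces $\det\mathcal{F}_m$ to be numerically trivial and $\mathcal{F}_m$ to be numerically flat. The closure argument of \cite{CH19,CCM} (hermitian flat bundles are closed inside numerically flat ones equipped with uniformly almost-semipositive metrics) then upgrades $\mathcal{F}_m$ to a hermitian flat vector bundle, hence trivializable by a suitable finite \'etale cover of $Y$.

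\textbf{From trivialization to splitting.} Choose $m_0$ so large that the evaluation map $\phi^*\mathcal{F}_{m_0}\twoheadrightarrow\mathcal{O}_X(m_0A)$ realizes $X$ as a closed subscheme of $\mathbb{P}(\mathcal{F}_{m_0})$, and pull everything back to the universal cover $\widetilde{Y}\to Y$. The resulting trivialization identifies $\mathbb{P}(\mathcal{F}_{m_0})\times_Y\widetilde{Y}\cong\widetilde{Y}\times\mathbb{P}^{r-1}$, and because $\phi$ is flat and semistable with every fiber pair isomorphic to $(F,\Delta|_F)$, the image of $\widetilde{X}$ meets each slice $\{\tilde y\}\times\mathbb{P}^{r-1}$ in the same embedded copy of $F$; rigidity of Hilbert schemes then yields $\widetilde{X}\cong\widetilde{Y}\times F$. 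The horizontality of $\Delta$ transports $\widetilde{\Delta}$ to $\widetilde{Y}\times\Delta|_F$. Finally the $\pi_1(Y)$-action descending this back to $X$ must preserve the pair $(F,\Delta|_F)$, yielding the desired representation $\rho:\pi_1(Y)\to\Aut{F}$ and the quotient description $X\cong(\widetilde{Y}\times F)/\pi_1(Y)$.

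\textbf{Main obstacle.} The principal difficulty lies in the hermitian-flatness step: passing from a numerically flat bundle carrying a family of metrics of curvature $\succeq -\varepsilon\omega_Y$ to a genuinely flat metric. In \cite{CCM} this is done using that the metric on $-(K_{X/Y}+\Delta)$ produced from a global section has vanishing Lelong numbers everywhere, whereas here we only control the metric on a Zariski-open $Y^\circ\subseteq Y$ over which $\mathbb{B}_-(-(K_{X/Y}+\Delta))$ is missed. Verifying that the induced relative Bergman-type metric on $\mathcal{F}_m$ extends across $Y\setminus Y^\circ$ without acquiring positive Lelong numbers, so that Simpson-type polystability arguments remain applicable, is precisely the point where the strengthened Proposition~\ref{r-c10} with its $\varepsilon$-uniform conclusion has to be exploited carefully; everything else in the argument is a fairly direct transcription of the nef case.
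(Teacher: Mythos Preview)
Your overall strategy is the right one, and the latter half of your sketch (embedding in a projectivized bundle, pulling back to the universal cover, recovering the $\pi_1(Y)$-action) is essentially how \cite{CCM} proceeds once numerical flatness of the relevant direct images is known. The genuine gap is earlier, in your ``From positivity to hermitian flatness'' paragraph.

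You assert that from the two facts
\[
c_1(\det\mathcal{F}_m)\ \text{pseudo-effective}\qquad\text{and}\qquad mA-\tfrac{1}{r}\phi^*\det\mathcal{F}_m\ \text{pseudo-effective}
\]
one concludes by ``intersecting with curves'' that $\det\mathcal{F}_m$ is numerically trivial. This does not follow: since $A$ is ample, the second inequality gives no upper bound on $c_1(\det\mathcal{F}_m)$ whatsoever. In the paper (following \cite{CH19,CCM}) this is exactly why one replaces $A$ by the twist
\[
\tilde{A}:=A-\tfrac{1}{r}\phi^*\det\phi_*A,\qquad r=\mathrm{rank}\,\phi_*A,
\]
so that $c_1(\phi_*\tilde{A})=0$ holds \emph{by construction}. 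One then considers the whole family $\mathcal{V}_{m,q,p}:=\phi_*\bigl(-mN(K_{X/Y}+\Delta)+qN\Delta+p\tilde{A}\bigr)$; Proposition~\ref{r-c10} gives $c_1(\mathcal{V}_{m,q,p})\ge 0$, and feeding Theorem~\ref{r-viehweg} back into Proposition~\ref{r-c10} yields a chain of inequalities terminating at $0=c_1(\phi_*\tilde{A})$, forcing $c_1(\mathcal{V}_{m,q,p})=0$ for all $m,q,p$. Numerical flatness then follows from weak positivity plus $c_1=0$ via \cite[Proposition~2.7]{CCM}.

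Two further remarks. First, you only treat $\phi_*\mathcal O_X(mA)$, but to recover the \emph{pair} splitting $(\widetilde{X},\widetilde{\Delta})\cong\widetilde{Y}\times(F,\Delta|_F)$ one also needs numerical flatness of the $\Delta$-twisted direct images $\phi_*(qN\Delta+p\tilde{A})$; this is why the paper works with the three-parameter family $\mathcal{V}_{m,q,p}$. Second, what you flag as the ``main obstacle'' --- upgrading numerical flatness to hermitian flatness --- is not actually needed here: once numerical flatness of the $\mathcal{V}_{m,q,p}$ is established, the locally trivial structure and the universal-cover splitting follow directly from \cite[Proposition~2.7, Theorem~2.8]{CCM} together with Proposition~\ref{nonvert}. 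The delicate hermitian-flatness argument you describe belongs to the proof of Theorem~\ref{thm:nonLelongnumber}, not to this one.
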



\subsection{Variants of base loci and flatness of direct images}
In this subsection, we prove Theorem \ref{thm:nefness}, \ref{thm:nonLelongnumber}, and \ref{thm:semiampleness}, which lead to Theorem \ref{thm:2_intro}. 
In \cite{Cao19, CH19, CCM},  
it was proven that nefness of $-K_{X/Y}$ implies 
numerical flatness of the direct image sheaves, 
however, in this subsection, we study the converse implication. 
This paper reveals a relation between positivity conditions 
(nefness, semi-ampleness, vanishing Lelong number) and 
certain flatness of the direct image sheaves.

\begin{theo}
\label{thm:nefness}
If $\mathbb{B}_{-}(- K_{X/Y} )$ is not dominant over $Y$, then it is empty $($that is, $-K_{X/Y}$ is nef$)$. 

\end{theo}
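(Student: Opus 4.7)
The plan is to combine the local-triviality result Theorem \ref{r-thm:locallytrivial} with a perturbation trick that descends nefness from a general fiber to $X$. First I would apply Theorem \ref{r-thm:locallytrivial} with $\Delta=0$: the hypothesis that $\mathbb{B}_-(-K_{X/Y})$ is not dominant over $Y$ makes $\phi$ locally trivial, with all fibers isomorphic to a fixed projective manifold $F$ and $\tilde X := X\times_Y \tilde Y \cong \tilde Y \times F$ over the universal cover, so that the pullback of $-K_{X/Y}$ equals $\mathrm{pr}_F^*(-K_F)$.

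Second, I would deduce that $-K_F$ is nef on $F$. Pick $y_0\in Y$ with $X_{y_0}\cong F$ disjoint from $\mathbb{B}_-(-K_{X/Y})$, which exists by non-dominance. For every ample $A$ on $X$ and every rational $\epsilon>0$ we have $\mathbb{B}(-K_{X/Y}+\epsilon A)\subseteq \mathbb{B}_-(-K_{X/Y})$, hence $\mathbb{B}(-K_{X/Y}+\epsilon A)\cap X_{y_0} = \emptyset$. Using the general inclusion $\mathbb{B}(L|_Z) \subseteq \mathbb{B}(L)\cap Z$ for a smooth subvariety $Z$ (which follows by restricting sections), this gives $\mathbb{B}(-K_F+\epsilon A|_{X_{y_0}}) = \emptyset$, so $-K_F+\epsilon A|_{X_{y_0}}$ is semi-ample on $F\cong X_{y_0}$; letting $\epsilon\to 0$, $-K_F$ is a limit of nef classes and hence nef.

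The main step is to promote this to nefness of $-K_{X/Y}$ on $X$. Fix an ample divisor $H$ on $X$ and a small rational $\epsilon>0$, and put $L_\epsilon := -K_{X/Y}+\epsilon H$. Since $-K_F$ is nef and $H|_F$ is ample, $L_\epsilon|_F$ is ample, so $L_\epsilon$ is $\phi$-ample; for $m$ divisible, $mL_\epsilon$ is Cartier and $\phi$-very ample, and by the flatness of $\phi$ (Proposition \ref{flatness}) the direct image $E_\epsilon := \phi_*(mL_\epsilon)$ is a locally free sheaf on $Y$, yielding a closed embedding $X\hookrightarrow \mathbb{P}_Y(E_\epsilon)$ with $mL_\epsilon = \mathcal{O}_{\mathbb{P}(E_\epsilon)}(1)|_X$. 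The crucial point is that $E_\epsilon$ is numerically flat: the local triviality identifies $E_\epsilon$ analytically with the flat bundle associated to the representation of $\pi_1(Y)$ on $H^0(F, mL_\epsilon|_F)$ induced by $\rho:\pi_1(Y)\to\mathrm{Aut}(F)$ (which preserves $-K_F$), and Proposition \ref{r-c10} applied to both $E_\epsilon$ and its dual upgrades vanishing Chern classes to genuine numerical flatness. With $E_\epsilon$ numerically flat, $\mathcal{O}_{\mathbb{P}(E_\epsilon)}(1)$ is nef on $\mathbb{P}_Y(E_\epsilon)$, hence $mL_\epsilon$ is nef on $X$; dividing by $m$ and letting $\epsilon\to 0$ gives $-K_{X/Y}$ nef.

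The main obstacle is justifying the numerical flatness of $E_\epsilon$ under the weaker hypothesis: in the Cao--H\"oring / CCM setting numerical flatness is derived from the full nefness of $-K_{X/Y}$, whereas here only the non-dominance of $\mathbb{B}_-(-K_{X/Y})$ is available. Exactly this gap is bridged by the strengthened direct-image results of Section \ref{subsection:semistability} (Proposition \ref{r-c10} and Theorem \ref{r-thm:locallytrivial}), which are the new ingredient making the descent go through.
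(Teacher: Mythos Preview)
Your overall strategy---local triviality, nefness of $-K_F$, then descent via numerically flat direct images---matches the paper's, but the key step where you claim $E_\epsilon = \phi_*(mL_\epsilon)$ is numerically flat has a genuine gap, and in fact the claim is false in general.

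Neither of your two arguments for numerical flatness works. The monodromy argument requires the pullback of $mL_\epsilon$ to $\tilde Y\times F$ to be of the form $\mathrm{pr}_F^*(\text{line bundle on }F)$; this holds for $-K_{X/Y}$ but not for the ample perturbation $\epsilon H$, since $H$ is not $\rho$-invariant. Proposition~\ref{r-c10} only yields weak positive curvature, giving $c_1(E_\epsilon)$ pseudo-effective, not zero; there is no reason the dual is also weakly positively curved. Indeed, already on a trivial product $X=Y\times F$ with $H=\mathrm{pr}_Y^*H_Y+\mathrm{pr}_F^*H_F$ one computes $\phi_*(mL_\epsilon)\cong H^0(F,-mK_F+m\epsilon H_F)\otimes\mathcal O_Y(m\epsilon H_Y)$, which has strictly positive $c_1$.

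The paper fixes exactly this point by replacing $\epsilon H$ with the twist $\tilde A := A - \frac{1}{r}\phi^*\det\phi_*A$, which is engineered so that $c_1(\phi_*\tilde A)=0$; then the chain of inequalities from Theorem~\ref{r-viehweg} and Proposition~\ref{r-c10} (the proof of Theorem~\ref{r-thm:locallytrivial}) forces $c_1(\mathcal V_m)=0$ for $\mathcal V_m:=\phi_*(-mK_{X/Y}+\tilde A)$, and weak positivity plus $c_1=0$ gives numerical flatness. Note also that $\tilde A$ is not $\phi$-ample, so the embedding into $\mathbb P_Y(E)$ is not available; the paper instead uses Castelnuovo--Mumford regularity for numerically flat bundles to get $\mathcal V_m\otimes B$ globally generated with $B$ independent of $m$, combines this with global generation of $-mK_F+A|_F$ on fibers, and lets $m\to\infty$. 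If you substitute $\tilde A$ for $\epsilon H$ and switch to this regularity argument, your outline becomes essentially the paper's proof.
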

\begin{proof}
Note that $\phi: X\to Y$ is automatically locally trivial  
by Theorem \ref{r-thm:locallytrivial}. 
For a sufficiently ample divisor $A$ on $X$, we define the divisor $\tilde{A}$ on $X$ 
by 
$$
\tilde{A}:= A  - \frac{1}{r}\phi^{*}\det \phi_{*}A
$$
by the same way as in Theorem \ref{r-viehweg}. 
Also, for every  $m \in \mathbb{Z}_{>0}$, we define the coherent sheaf $\mathcal{V}_m$ by 
$$
\mathcal{V}_m:= \phi_*(-mK_{X/Y}+ \tilde{A}). 
$$
The sheaf $\mathcal{V}_m $ is reflexive by the flatness of $\phi$, 
and thus it is actually locally free and numerically flat by 
Theorem \ref{flatness}, \ref{r-c10}, 
and \cite[Proposition 2.7]{CCM} (see also \cite[Theorem 3.4]{HIM19}). 
Then we find an ample divisor $B$ (independent of $m$) 
such that  $\mathcal{V}_m \otimes B$ is globally generated on $X$,  
by the argument of the Castelnuovo-Mumford regularity. 
Indeed, the Kodaira type vanishing theorem holds for numerically flat vector bundles by Lemma \cite[Lemma 2.10]{CCM}, 
and thus $\mathcal{V}_m$ is $(n+1)$-regular with respect to $A_Y$, namely, 
$H^{q}(Y, \mathcal{V}_m \otimes A_Y^{\otimes (n+1-q)})=0$ holds for any $q>0$. 
Here $A_Y$  is a very ample divisor on $Y$ such that $A_Y -K_Y$ is ample and $n$ is the dimension of $Y$. 
This implies that $B:=A_Y^{\otimes n+1}$ satisfies the desired property (for example see \cite[Theorem 1.8.5]{Laz}). 

For an arbitrary point $y \in Y$, the natural map 
$$
H^{0}(X, -mK_{X/Y} + \tilde{A} + \phi^*B)=H^{0}(Y, \mathcal{V}_m\otimes B) 
\xrightarrow{\quad \quad }
(\mathcal{V}_m\otimes B)_y
$$
is surjective since $\mathcal{V}_m \otimes B$ is globally generated. 
On the other hand, there exists $y \in Y$ such that $-K_{X_y}=-K_{X/Y}|_{Y_y}$ is nef 
since $\mathbb{B}_-(-K_{X/Y})$ is properly contained in $Y$. 
Then if follows that $K_{X_y}$ is nef for any $y\in Y$
since all the fibers of $\phi$ are isomorphic each other by Theorem \ref{r-thm:locallytrivial}. 
Hence we may assume that $-mK_{X_y} + A|_{X_y}$ is globally generated 
by choosing an sufficiently ample divisor $A$ again.   
Then the evaluation map 
$$
H^{0}(X_y, -mK_{K_y} + A|_{X_y} ) =
H^{0}(X_y, -mK_{X_y} + (\tilde{A} + \phi^*B) |_{X_y})
\to (-mK_{K_y} + \tilde{A} + \phi^*B|_{X_y})_x
$$
is surjective for any $m\in  \mathbb{Z}_{>0}$ and any $x \in X_y$. 
The above surjective maps imply that $-mK_{X/Y} + \tilde{A} + \phi^*B$ is globally generated for any $m$. 
We  can conclude that $-K_{X/Y}$ is nef as $m$ tends to $+\infty$.  
\end{proof}

\begin{theo}
\label{thm:nonLelongnumber}
Let $h$ be a singular hermitian metric on $-K_{X/Y}$ with semipositive curvature current. 
If the upper level set $P(h)$ of Lelong numbers is not dominant over $Y$, then $P(h)$ is empty 
$($that is, the Lelong number of $h$ is zero everywhere$)$.  
\end{theo}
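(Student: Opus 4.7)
My plan is to first reduce the statement to the nef case and then exploit the structure of numerically flat direct images.

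Since $\mathbb B_-(-K_{X/Y}) \subseteq P(h)$ for any singular hermitian metric with semipositive curvature (recorded in Subsection~\ref{subsection:base loci}), the non-dominance of $P(h)$ implies that of $\mathbb B_-(-K_{X/Y})$. By Theorem~\ref{thm:nefness} this forces $-K_{X/Y}$ to be nef, and Theorem~\ref{r-thm:locallytrivial} then yields that $\phi$ is locally trivial; in particular, after pulling back via the universal cover $\pi : \tilde Y \to Y$, the total space becomes a product $\tilde X \cong \tilde Y \times F$ and $-K_{\tilde X/\tilde Y}$ is the pullback of $-K_F$ along the projection to $F$.

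Following the construction in the proof of Theorem~\ref{thm:nefness}, I would fix a sufficiently ample divisor $A$ on $X$, form $\tilde A$ as in Theorem~\ref{r-viehweg}, and consider the locally free sheaves $\mathcal V_m := \phi_*(-mK_{X/Y} + \tilde A)$. By that proof, each $\mathcal V_m$ is numerically flat, and there is an ample divisor $B$ on $Y$ (independent of $m$) such that $\mathcal V_m \otimes B$ is globally generated. I would then equip $\mathcal V_m$ with the fiberwise $L^2$-Bergman kernel metric $H_m$ induced by $h^m \otimes h_{\tilde A}$, which is well-defined and smooth over $Y \setminus \phi(P(h))$ thanks to the vanishing of Lelong numbers of $h$ on those fibers. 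A singular-metric version of Berndtsson--Pa\u{u}n's positivity of relative Bergman kernels (cf.\ the machinery behind Proposition~\ref{r-c10}) extends $H_m$ across $\phi(P(h))$ as a singular hermitian metric with Griffiths semipositive curvature on $\mathcal V_m$.

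At this point I would invoke the hermitian flatness criterion for numerically flat vector bundles (\cite[Proposition~2.7]{CCM}, \cite[Theorem~3.4]{HIM19}): a numerically flat vector bundle carrying a singular hermitian metric of Griffiths semipositive curvature is automatically smooth and hermitian flat. Hence $H_m$ is smooth everywhere on $Y$, which forces the fiberwise $L^2$-norms of sections of $-mK_{X/Y}+\tilde A$ with respect to $h^m \otimes h_{\tilde A}$ to vary smoothly in $y$. Pulling back the global generators of $\mathcal V_m \otimes B$ to $X$, this yields sections $s_m \in H^0(X, -mK_{X/Y} + \tilde A + \phi^*B)$ whose $L^2$-norms with respect to $h^m \otimes h_{\tilde A} \otimes h_B$ are uniformly controlled. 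Taking $m$-th roots and passing to the upper-semicontinuous envelope as $m \to \infty$ produces a PSH majorant of $h$ on $-K_{X/Y}$ whose Lelong numbers vanish identically, from which $\nu(h,x) = 0$ for every $x \in X$.

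\textbf{Main obstacle.} The principal difficulty lies in the middle two steps: defining the Bergman-type metric $H_m$ in the presence of singularities of $h$ on the vertical locus $\phi^{-1}(\phi(P(h)))$, extending it across that locus as a singular hermitian metric with semipositive curvature, and then upgrading it to a smooth flat metric via the rigidity of numerically flat bundles. This combines the singular-metric Berndtsson--Pa\u{u}n positivity theorem with the hermitian flatness criterion of \cite{HIM19,CCM}. Once these analytic inputs are secured, the translation back to vanishing Lelong numbers of $h$ is a standard envelope-of-sections argument.
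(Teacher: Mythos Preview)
Your route through the hermitian flatness of $\mathcal V_m$ is essentially the paper's own argument: the paper also reduces to the nef case via $\mathbb B_-(-K_{X/Y})\subseteq P(h)$ and Theorem~\ref{thm:nefness}, writes $\mathcal V_m=\phi_*\bigl(pK_{X/Y}+(-(p+m)K_{X/Y}+\tilde A)\bigr)$, equips it with the canonical $L^2$-metric $H_{m,p}$ built from $h^{p+m}$ and a psh metric $g$ on $\tilde A$ supplied by Theorem~\ref{r-viehweg} (your ``$h^m\otimes h_{\tilde A}$'' is close but not quite the right twist), and then invokes the rigidity of numerically flat bundles (\cite{CP17}, \cite[Lemma~3.5]{HIM19}) to conclude that $H_{m,p}$ is genuinely smooth.

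The gap is the final paragraph. The envelope-of-sections argument does not yield what you claim. An $L^2$-bound $\int_X|s|^2 h^m<\infty$ produces, after $\frac1m\log\sum|s_i|^2$, a weight that is \emph{less} singular than that of $h$ (this is the sub-mean-value direction of Demailly's approximation), and a less singular potential has \emph{smaller} Lelong numbers; so zero Lelong numbers for the envelope says nothing about $\nu(h,\cdot)$. No reading of ``PSH majorant'' reverses this inequality. The paper extracts the conclusion from your own ingredients, but by a direct contradiction rather than an envelope: suppose $\nu(h,x_0)>0$ at some $x_0\in X_{y_0}$, choose $t\in H^0\bigl(X_{y_0},(-mK_{X/Y}+\tilde A)|_{X_{y_0}}\bigr)$ with $t(x_0)\neq0$ (possible since the restriction is globally generated), and extend $t$ to a local holomorphic section $s$ of $\mathcal V_m$ near $y_0$. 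Smoothness of $H_{m,p}$ forces
\[
|s(y_0)|^2_{H_{m,p}}=\int_{X_{y_0}}|t|^2\,B_p^{1-1/p}\,h^{1+m/p}\,g^{1/p}<\infty,
\]
but once $m$ is large enough that $(1+m/p)\,\nu(h,x_0)$ exceeds the integrability threshold, this integral diverges for any $t$ not vanishing at $x_0$. That is the whole argument; replace your last paragraph by this single integrability obstruction.
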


\begin{proof}

We remark that $-K_{X/Y}$ is nef by $\mathbb{B}_-(-K_{X/Y}) \subseteq P(h)$ 
and Theorem \ref{thm:nefness}. 
We first show that $\mathcal{V}_m$ is hermitian flat. 
Note that, if $\mathcal{V}_m$ admits a positively curved singular hermitian metric $H$, 
the metric $H$ is automatically a smooth hermitian metric with flat Chern curvature 
by \cite{CP17} and \cite[Lemma 3.5]{HIM19}. 
To get a positively curved metric $H$ on $\mathcal{V}_m$, 
we take a singular hermitian metric $g$ on $\tilde{A}$ with semipositive curvature 
by Theorem \ref{r-viehweg}. 
Then, by Skoda's lemma and H\"order's inequality, we can easily check that 
$$
\I{h^{(1+m/p)}g^{1/p}}= \mathcal{O}_X 
\text { on } X \setminus \phi^{-1}(\phi (P(h))) \text{ for $p \gg 1$}
$$
since the singularities of $g$ can be killed by taking a sufficiently large $p$.
Note that the above property holds for any $m \in \mathbb{Z}_{>0}$  by the assumption. 
Hence, by \cite[Theorem 2.2]{CCM}, 
we can obtain the singular hermitian metric $H_{m, p}$ on $\mathcal{V}_m$ 
induced by the metric $h^{(p+m)}g$ on $-(p+m) K_{X/Y}+ \tilde{A}$ and the equality
$$
\mathcal{V}_m= \phi_*(pK_{X/Y} -(p+m) K_{X/Y}+ \tilde{A}). 
$$

For a contradiction, we assume that there exists a point $x_0 \in X$ with $\nu(h, x_0)>0$. 
We can  take a section $t$ of $(-mK_{X/Y}+\tilde{A})|_{X_y}$ whose valued at $x_0$ is non-zero 
since we may assume that it is globally generated on $X_y$ for any $m \geq 0$. 
By locally extending the section $t$, we can find a local section $s(y)$ of $\mathcal{V}_m$ 
defined on a (Euclidean) open neighborhood of $y_0:=\phi(x_0)$ such that $s(y_0)=t$. 
The norm $|s(y)|_{H_{m,p}}$ is a smooth function (in particular, it is bounded) 
since $H_{m,p}$ is automatically smooth.
On the other hand,  by the construction of $H_{m,p}$, 
we have 
$$
|s(y)|^2_{H_{m,p}}=\int_{X_y} |s(y)|^2_{B_p^{1-1/p} h^{(1+m/p)}g^{1/p}}, 
$$
where $B_p$ is the $p$-Bergman kernel on $pK_F + (-(p+m)K_F + \tilde{A})$. 
 The section $s(y_0)=t$ must be  zero at $x_0$  for a sufficiently large $m$,  
since the above norm is smooth and thus bounded. 
This is a contradiction to the choice of $t$. 
\end{proof}
\begin{thm}
\label{thm:semiampleness}
If $\mathbb B(-K_{X/Y})$ is not dominant over $Y$, then it is empty $($that is, $-K_{X/Y}$ is semi-ample$)$. 
\end{thm}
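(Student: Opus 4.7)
My plan is to combine the nefness established in Theorem~\ref{thm:nefness} with the semi-ampleness forced on fibres and then promote semi-ampleness to the total space by an \'etale descent. First, the inclusion $\mathbb B_-(-K_{X/Y})\subseteq \mathbb B(-K_{X/Y})$ together with Theorem~\ref{thm:nefness} gives that $-K_{X/Y}$ is nef. Theorem~\ref{r-thm:locallytrivial} then applies and shows that $\phi$ is locally trivial with fibre $F$, that $\tilde X\cong \tilde Y\times F$ on the universal cover, and that there is a monodromy homomorphism $\rho\colon \pi_1(Y)\to \Aut{F}$. Since $\mathbb B(-K_{X/Y})$ misses some fibre $X_y$, some $-m_0K_{X/Y}$ is generated by sections along $X_y$; restricting these sections to $X_y\cong F$ shows that $-m_0K_F$ is globally generated, so $-K_F$ is semi-ample, and by local triviality the same holds on every fibre of $\phi$.

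For every positive multiple $m$ of $m_0$ the divisor $-mK_{X/Y}$ is then $\phi$-globally generated. Since $\phi$ is flat with isomorphic fibres, $\mathcal W_m:=\phi_*\mathcal O_X(-mK_{X/Y})$ is locally free, commutes with base change, and the evaluation $\phi^*\mathcal W_m\twoheadrightarrow \mathcal O_X(-mK_{X/Y})$ is surjective; hence semi-ampleness of $-mK_{X/Y}$ on $X$ will follow at once from global generation of $\mathcal W_m$ on $Y$ for some such $m$. Adapting the numerical flatness argument for $\phi_*(-mK_{X/Y}+\tilde A)$ from the proof of Theorem~\ref{thm:nefness} (based on Proposition~\ref{r-c10} and Theorem~\ref{r-viehweg}) and removing the auxiliary ample twist $\tilde A$ by using the $\phi$-global generation now at our disposal, I expect to show that $\mathcal W_m$ is numerically flat, and hence, being reflexive on a smooth base, hermitian flat.

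The main obstacle is to upgrade this hermitian flatness of $\mathcal W_m$ to \emph{\'etale trivializability}, i.e.\ to prove that the representation $\pi_1(Y)\to \mathrm{GL}(H^0(F,-mK_F))$ induced by $\rho$ has finite image. Granting this, the pullback of $\mathcal W_m$ to the corresponding finite \'etale cover $Y'\to Y$ is trivial, so $-mK_{X/Y}$ pulled back to $X':=X\times_Y Y'$ is globally generated, and a norm/averaging argument makes semi-ampleness descend through the finite \'etale morphism $X'\to X$. To establish the finiteness of the monodromy image I would use that $\rho(\pi_1(Y))$ lies in the linear algebraic stabilizer of the polarization $-mK_F$ in $\Aut{F}$, combine this with the compact-unitary constraint coming from numerical flatness of $\mathcal W_m$, and invoke the rigidity supplied by Theorem~\ref{thm:nd} together with the vanishing of Lelong numbers from Theorem~\ref{thm:nonLelongnumber} to force the image into a finite subgroup.
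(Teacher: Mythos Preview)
Your route is far more involved than what the paper does, and it contains two genuine gaps.

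First, the paper's proof is a one-line reduction to Theorem~\ref{thm:nonLelongnumber}: choose $m_0$ with $\mathbb B(-K_{X/Y})=\mathrm{Bs}(-m_0K_{X/Y})$, and take $h$ to be the singular metric on $-m_0K_{X/Y}$ induced by a basis of $H^0(X,-m_0K_{X/Y})$. Since $h$ has analytic singularities, $\nu(h,x)>0$ exactly when the local weight is $-\infty$, so $P(h^{1/m_0})=\mathrm{Bs}(-m_0K_{X/Y})=\mathbb B(-K_{X/Y})$. The hypothesis says this is not dominant, and Theorem~\ref{thm:nonLelongnumber} then gives $P(h^{1/m_0})=\emptyset$, i.e.\ $\mathbb B(-K_{X/Y})=\emptyset$. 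You invoke Theorem~\ref{thm:nonLelongnumber} only obliquely at the end; the direct reduction is the entire argument.

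Second, both of your main steps are unsupported. The numerical flatness of $\mathcal W_m=\phi_*\mathcal O_X(-mK_{X/Y})$ does not follow by ``removing the auxiliary ample twist $\tilde A$'': Proposition~\ref{r-c10} and Theorem~\ref{r-viehweg} require the twisting divisor to be $\phi$-big, and $-mK_{X/Y}$ is only $\phi$-semi-ample here, so neither the weak positivity of $\mathcal W_m$ nor the vanishing $c_1(\mathcal W_m)=0$ is available from that machinery. More seriously, the \'etale trivializability of $\mathcal W_m$ (equivalently, finiteness of the monodromy image in $\mathrm{GL}(H^0(F,-mK_F))$) is exactly what the paper establishes in Section~\ref{subsection:semi-ample -K} \emph{after} semi-ampleness is known, using the Iitaka fibration of $-K_{X/Y}$ and the comparison of Stein factorizations; see Remark~\ref{rem-semi}. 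Your proposed independent proof of finiteness---combining a stabilizer argument, a unitary constraint, Theorem~\ref{thm:nd}, and Theorem~\ref{thm:nonLelongnumber}---is only a wish list; none of these ingredients produces a finite image without the semi-ample fibration structure that you are trying to derive. As written, the argument is circular or, at best, incomplete at its central step.
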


\begin{proof}
The stable base locus $\mathbb{B}(-K_{X/Y})$ coincides with 
the base locus of $-m_0K_{X/Y}$ for some $m_0 \in \mathbb{Z}_{>0}$. 
It is easy to see that $\nu(\varphi, x)>0$ if and only if $\varphi(x)=\infty$
in the case where a (quasi)-psh function $\varphi$ has analytic singularities. 
Hence the  singular hermitian metric $h$ on $-m_0K_{X/Y}$ defined by a basis of $H^0(X, -m_0K_{X/Y})$ satisfies 
that 
$$
\mathbb{B}(-K_{X/Y})={\rm{Bs}}(-m_0K_{X/Y})=P(h^{1/m}). 
$$
Then the assertion follows from Theorem \ref{thm:nonLelongnumber}. 
\end{proof}

\begin{rem}\label{rem-semi}
(1) In Theorem \ref{thm:semiampleness}, 
we do not mention of the direct images, 
but we show that $\phi_{*}(-mK_{X/Y})$ is \'etale trivializable 
when $-K_{X/Y}$ is semi-ample in 
Section \ref{subsection:semi-ample -K}. \\
(2) Theorem \ref{thm:2_intro} holds for klt pairs $(X, \Delta)$ 
by the same argument. 
\end{rem}

\section{Algebraic fiber spaces with semi-ample anti-canonical divisors} 
\label{subsection:semi-ample -K}
This section is devoted to the proof of Theorem \ref{thm:4_intro}. 

\begin{proof}[Proof of Theorem \ref{thm:4_intro}]
Let $\phi: X \to Y$ be a morphism with connected fibers between smooth projective varieties $X$ and $Y$ 
such that the relative anti-canonical divisor $-K_{X/Y}$ is semi-ample. 
For simplicity, we put $L:=-K_{X/Y}$.
The proof can be divided into six steps.  
\setcounter{step}{0}
\begin{step}[Explanation of our strategy]\label{step1}
 
By the main result of \cite{CH19, Cao19}, 
the fiber product of $X$ by the universal cover $\bar{Y}:=Y_{\rm{univ}} \to Y$ is isomorphic to the product $\bar{Y} \times F$. 
Furthermore, there exists a homomorphism $\rho: \pi_1(Y) \to \Aut{F}$ of groups so that 
$X$ can be constructed from the quotient $\bar{Y} \times F \cong \bar{X}$ 
by $\pi_{1}(Y)$ and $\rho$. 
Here $\pi_1(Y)$ naturally  acts on $\bar Y$ and also acts on $\bar Y \times F$ 
by $(y, p) \mapsto (\gamma \cdot y, \rho(\gamma) \cdot p) $, 
where $(y, p) \in \bar Y \times F$ and $\gamma \in \pi_1(Y)$. 
We will denote by $\bar{Y} / N$ (resp. $\bar{Y} \times F / N$) 
the quotient of $\bar{Y}$ (resp. $ \bar{Y} \times F$) 
corresponding to a subgroup $N \subseteq \pi_1(Y)$. 
Our situation can be summarized by the following commutative diagram and isomorphisms: 
\[\xymatrix{
\bar{Y} \times F \cong \bar{X}:=\bar{Y} \times_Y X\ar[d] \ar[r]\ar@{}[dr] & 
X' := \bar{Y} \times F / N\ar[r]\ar[d]
&
X \cong \bar{Y} \times F / \pi_1(Y) 
\ar[d]^{\phi} \\
\bar{Y}:=Y_{\rm{univ}} \ar[r] &Y' =\bar{Y} / N\ar[r]&Y\cong \bar{Y}/\pi_1(Y). \\
}\]

The following claim based on an elemental  observation gives our basic strategy.

\begin{claim}\label{claim1}
Let $N$ be a normal subgroup in $\pi_1(Y)$.  
Then the following conditions are equivalent:  
\begin{itemize}
\item[$\bullet$] The quotient group $\pi_1(Y)/N$ is a finite group and $N$ is contained in $\Ker{\rho}$. 

\item[$\bullet$] The quotient $X':=\bar{Y} \times F / N  \to X$ corresponding to  $N$ 
is a finite \'etale cover and $X'$ is isomorphic to the product $Y' \times F$, 
where $Y'$ is the quotient of $Y$ corresponding to $N$. 
\end{itemize}
\end{claim}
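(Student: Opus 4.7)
My plan is to exploit the explicit description of $X$ and $X'$ as quotients of $\bar Y \times F$ by the diagonal action $\gamma \cdot (y,p) = (\gamma \cdot y,\, \rho(\gamma) \cdot p)$ of $\pi_1(Y)$. Since the deck action on $\bar Y$ is free, so is this diagonal action and hence its restriction to any normal subgroup $N \subseteq \pi_1(Y)$. Consequently $X' = (\bar Y \times F)/N$ is a smooth variety, the residual action of $\pi_1(Y)/N$ on $X'$ is free, and $X' \to X$ is a Galois cover with group $\pi_1(Y)/N$; in parallel $Y' \to Y$ is a Galois cover with the same group.

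For the forward direction, the assumption that $\pi_1(Y)/N$ is a finite group combined with the freeness above makes both $X' \to X$ and $Y' \to Y$ finite \'etale Galois covers. The hypothesis $N \subseteq \Ker{\rho}$ reduces the $N$-action on $\bar Y \times F$ to an action only on the first factor, so the quotient splits canonically as a $Y'$-scheme:
\[
X' = (\bar Y \times F)/N = (\bar Y/N) \times F = Y' \times F.
\]

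For the reverse direction, finiteness of $X' \to X$ together with the free-action description gives $[\pi_1(Y):N] < \infty$, so $\pi_1(Y)/N$ is finite. To obtain $N \subseteq \Ker{\rho}$, I will pull the $Y'$-scheme isomorphism $X' \cong Y' \times F$ back along the universal cover $\bar Y \to Y'$: both pullbacks are canonically $\bar Y \times F$ over $\bar Y$, but carry different $N$-actions, namely the $\rho$-twisted action on the source and the trivial product action on the target. The induced automorphism $\Phi: \bar Y \times F \to \bar Y \times F$ over $\bar Y$ must intertwine these two $N$-actions, and since the product structure in the statement refers to the canonical one induced by the quotient $(\bar Y \times F)/N$, $\Phi$ is the identity; the intertwining relation then collapses to $\rho(\gamma) = \id$ for all $\gamma \in N$.

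The main obstacle lies in the reverse direction, specifically in formalizing the intertwining step: an arbitrary $Y'$-scheme automorphism of $\bar Y \times F$ over $\bar Y$ is encoded by a morphism $\sigma : \bar Y \to \Aut{F}$ via $(y,p) \mapsto (y, \sigma(y)(p))$, and equivariance under the two $N$-actions gives the cocycle relation $\sigma(\gamma y) \circ \rho(\gamma) = \sigma(y)$. Reading the stated product structure on $X'$ as the canonical quotient of the identity on $\bar Y \times F$ pins down $\sigma \equiv \id$, forcing $\rho|_N = \id$; this canonicity is the pivotal interpretive point and the crux of the argument.
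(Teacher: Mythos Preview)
Your proof takes essentially the same route as the paper's: the free diagonal action makes $X'\to X$ and $Y'\to Y$ Galois covers with group $\pi_1(Y)/N$, and $N\subseteq\Ker\rho$ collapses the $N$-action to the first factor, giving the product splitting. For the converse the paper is even terser than you are---it simply asserts that if the quotient is a product then each $\gamma\in N$ acts trivially on $F$---so the cocycle analysis and the \emph{canonical}-product reading you invoke is precisely the implicit assumption the paper makes; since only the forward direction is actually used in the subsequent strategy (to reduce to showing $\Image\rho$ is finite), the interpretive subtlety you correctly flag is immaterial to the paper's argument.
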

\begin{proof}
It is easy to check that $X'=\bar{Y} \times F / N  \to X$ is a finite morphism 
if and only if  $\pi_1(Y)/N$ is a finite group. 
In the case of $N \subseteq \Ker{\rho}$, 
the action $g:=\rho(\gamma): F \to F$ on $F$ induced  
by $\rho$ and an arbitrary loop $\gamma \in N$ in $Y$  
is trivial (that is, $g=\id_F$), 
and thus the corresponding quotient $\bar{Y} \times F / N  \to X$ is the product $Y/N \times F$. 
Conversely, when the quotient $\bar{Y} \times F / N$  is isomorphic to the product, 
an arbitrary loop $\gamma \in N$ trivially acts on $F$. 
\end{proof}

Our basic strategy is to show 
that $\pi_1(Y)/\Ker{\rho} \cong \Image{\rho} \subseteq \Aut{F}$ 
is a finite subgroup. 

\end{step}

\begin{step}[Comparison of semi-ample fibrations]\label{step2}

Let $H$ (resp. $G_y$) be the image of the semi-ample fibration associated to $mL$ 
(resp. the restriction $L|_{X_y}$ to a fiber $X_y$). 
For a fixed sufficiently large $m>0$, we have the morphisms with connected fibers 
$$
\Phi:=\Phi_{|mL|}:X \to H \quad \text{and} \quad \varphi:=\Phi_{|mL|_{X_y}|}:X_y \to G_y. 
$$
All the fibers $X_y$ are isomorphic to $F$ in our situation, 
but we will use the notations $X_y$ and $G_y$ (which looks like depending on $y$) 
to carefully treat the automorphisms of $F$.  
In this step, we will compare the above semi-ample fibrations and prove that 
the Stein factorization of the restriction of $\Phi$ to $X_y$ 
actually coincides with $\varphi$ 
by using the solution of Hacon-$\mathrm{M^{c}}$Kernan's question 
proved by Ejiri-Gongyo (see \cite{HM07, EG19} and see \cite{CCM} for its generalization). 

The main result of \cite{EG19} asserts that
\begin{align*}
H^{0}(X, mL) \to H^{0}(X_y, mL|_{X_y}) 
\end{align*}
is injective. 
Hence, for a basis $\{s_i \}_{i=1}^{k}$ of $H^{0}(X, mL) $, 
we can take sections $\{t_j \}_{j=1}^{\ell}$ of $H^{0}(X_y, mL|_{X_y}) $ 
such that $\{s_i|_{X_{y}}, t_j\}_{i=1, j_=1}^{k, \ell}$ is a basis $H^{0}(X_y, mL|_{X_y}) $. 
We may assume that $H$ (resp. $G_y$) is the image of 
the semi-ample fibration $\Phi$ (resp. $\varphi$) 
defined by $[s_1: \cdots: s_k]$ (resp. $[s_1: \cdots: s_k, t_1: \cdots: t_\ell]$). 
Now we consider the linear projection $p:\mathbb{P}^{k+\ell-1} \dashrightarrow \mathbb{P}^{k-1}$ to the first $k$  components 
and its restriction to $G_y \subseteq \mathbb{P}^{k+\ell-1}$. 
We remark that the linear projection is a rational map, 
but its restriction to $G_y \subseteq \mathbb{P}^{k+\ell-1}$ determines the morphism.  
We have the following commutative diagram: 
\[\xymatrix{
X\ar@{}[d]|{\bigcup}\ar[r]^-{\quad \Phi \quad} \ar@{}[dr] & H   \ar@{^{(}-_>}[r] &\mathbb{P}^{k-1} \\
X_y \ar[r] \ar[r]^-{\quad \varphi \quad}  & G_y \ar[u]_{p}  \ar@{^{(}-_>}[r]& \mathbb{P}^{k+\ell-1}.  \ar@{.>}[u]\\
}\]
Let $A$ (resp. $B$) be the ample divisor on $H$ (resp. $G_y$) 
defined by the restriction 
of the hyperplane bundle of $\mathbb{P}^{k-1}$ (resp. $\mathbb{P}^{k+\ell-1}$). 
By the definition, we have $B=p^* A$. 
On the other hand, we have 
$$
mL = \Phi^*A \quad \text{ and } \quad mL|_{X_y} = \varphi^*B 
$$
by the construction of $\Phi$ and $\varphi$.  
Hence, by considering the restriction of the first equality to $X_y$, 
we obtain 
$$
\varphi^*(p^* A) = mL|_{X_y} = \varphi^*B. 
$$
By the projection formula, we can obtain $p^*A=B$
since the morphism $\varphi$ is algebraic fiber space 
(that is, it is proper and its fibers are connected). 
It follows that $p$ must be a finite morphism 
since the pull-back of an ample divisor is ample again. 
This implies that $\varphi: X_y \to G_y$ and $G_y \to H$ 
gives the Stein factorization of $\Phi|_{X_y}: X_y \to H$.

If $p: G_y \to H$ is injective (that is, $p$ is isomorphic), 
the morphism defined by $p \mapsto (\phi(p), \Phi(p))$ 
determines the isomorphism $X \to Y \times H$ (which finishes the proof). 
However $p: G_y \to H$ is finite but not always one to one mapping (see Example \ref{ex:semi-ample}). 
For this reason, we have to construct an appropriate a finite \'etale cover of $Y$ (and also $X$) 
so that $p: G_y \to H$ is isomorphic. 
The key point here is the space of sections of $mL|_{X_y}$ is invariant, 
but the space of sections of $X$ is expanded, 
by taking the fiber product by \'etale covers of $Y$, 
which enables us to get $p: G_y \cong H$.  
\end{step}

\begin{step}[Relations between $\rho:\pi_1(Y) \to \Aut{F}$ and sections of $-K_{X/Y}$]\label{step3}
In this step, we reveal relations of $\rho:\pi_1(Y) \to \Aut{F}$ and sections of $-K_{X/Y}$ 
by applying the structure theorem of klt pairs with nef anticanonical bundles (see \cite[Theorem 1.3]{CCM}). 
This step is the core of Theorem \ref{thm:4_intro}.

Let $D$ be an effective divisor $D$ such that $D$ is $\mathbb{Q}$-linearly equivalent to $L$. 
Then, for a sufficiently small $\delta>0$, we can easily check that 
$(X, \delta D)$ is a klt pair and $-(K_{X/Y}+\delta D) \sim_{\mathbb{Q}} -(1-\delta)K_{X/Y}$ is nef. 
By applying  \cite[Theorem 1.3]{CCM} to the pair $(X, \delta D)$, 
we can conclude that  not only the manifold $X$ but also the pair 
$(\bar X, \delta \bar D)$ has the structure of the product, 
where $\bar D$ is the pull-back $\bar D := \pi_X^* D$ and $\pi_X: \bar X \to X$. 
Hence, for a section $s \in H^{0}(X, mL)$, we can find a section $s_F \in H^{0}(F, mL|_F)$ 
such that  
$$
\pi_X^* s = \pr_2^* s_F, 
$$
where $\pr_2: \bar X = Y \times F \to F$ is the second projection. 
Then it can be shown that the composite morphism $\Phi \circ \pi_X: \bar X \to H$, 
which is defined by sections $\{\pi_X^*s_i\}_{i=1}^k$, 
is actually determined by sections in $H^{0}(F, mL|_F)$. 
This implies that the restriction of $\Phi \circ \pi_X $ to the fiber $\{y\} \times F$ does not depend on $y \in \bar Y$. 
In other words, the composite morphism  $\Phi \circ \pi_X$ factors 
into the second projection $\pr_2: \bar X = Y \times F \to F$ and the composite morphism $\psi:=p \circ \varphi :F(\cong X_y) \to H$: 

\[
\xymatrix{
\bar X \ar[d]_{\pr_2} \ar[r]^-{\pi_X} \ar@{}[dr] & X   \ar[r]^-{\Phi} &H\\
F \ar[rru]^-{\psi } \ar[r] \ar[r]_-{\cong}  & X_y   \ar[r]_-{\varphi} &G_y.  \ar[u]_-{p}\\
}\]
By the above argument, for an arbitrary loop $\gamma$ in $Y$, 
the automorphism $g:=\rho(\gamma) \in \Aut{F}$ induced by $\rho: \pi_{1}(Y)  \to \Aut{F}$ 
always satisfies the following commutative diagram: 
\begin{equation}\label{comm1}
\vcenter{
\xymatrix{
F \ar[rr]^{g}  \ar[rd]^{\psi}  \ar[d]_{\varphi}&   & F \ar[ld]_{\psi} \ar[d]^{\varphi}\\
G_y  \ar[r]_{p}& H &  G_y\ar[l]^{p}. 
}
}
\end{equation}
\end{step}

\begin{step}[Proof of  Theorem \ref{thm:4_intro} (1)]\label{step4}
In this step, we will prove Theorem \ref{thm:4_intro} (1) under the assumption that $L|_F=-K_F$ is ample. 
It is sufficient for this purpose to prove that $\Image{\rho} \subseteq \Aut{F}$ is a finite subgroup by Claim \ref{claim1}.

We may assume that $\varphi: F \to G_y$ is isomorphic since $L|_F=-K_F$ is ample. 
Then $\psi=\varphi \circ p$ is finite since $p$ is so. 
Hence, by considering an unramified  point of $\psi$, 
we can find a (Euclidean) open subset $U_H \subseteq H$ 
such that  $\psi: V_{\lambda} \to U_H$ is isomorphic for any $\lambda \in \Lambda$, 
where $\psi^{-1}(U_H)=\coprod_{\lambda \in \Lambda} V_{\lambda}$ and 
$\Lambda$ is a finite set with $|\Lambda| = \deg \psi$. 
It follows that $g \in \Image{\rho}$ is an isomorphism over $H$ 
(that it, it preserves the morphism $\psi: F \to H$)
from the commutative diagram (\ref{comm1}). 
Hence we obtain the homomorphism
$$
\Image{\rho} \subseteq \Aut{F/H} \to \Aut{\psi^{-1}(U_H)/U_H}, 
$$
where $\Aut{F/H}$ (resp. $\Aut{\psi^{-1}(U_H)/U_H}$) is the set of automorphisms of $F$ (resp. $\psi^{-1}(U_H)$)
preserving $\psi: F \to H$ (resp. $\psi: \psi^{-1}(U_H) \to U_H$). 
By $\psi^{-1}(U_H)=\coprod_{\lambda \in \Lambda} V_{\lambda}$, 
we can show that the automorphisms $\Aut{\coprod_{\lambda \in \Lambda} V_{\lambda}/U_H}$ 
are just the permutations of $\Lambda$, 
and thus we have $|\Aut{\psi^{-1}(U_H)/U_H}| \leq |\Lambda| =(\deg p) !$. 
On the other hand, we can see that the above homomorphism is injective 
by an observation on the graph of $g \in \Image{\rho}$ and its irreducibility. 
Hence the conclusion (1) can be obtained by Claim \ref{claim1} and the finiteness of $\pi_1(Y)/\Ker{\rho}$. 
\end{step}

\begin{step}[Proof of \'etale trivializability of direct images]\label{step5}
In this step, we will prove that the direct image $\phi_*(-mK_{X/Y})$ 
is a trivial vector bundle 
after we take an appropriate finite \'etale cover $Y' \to Y$ and its base change. 
We go back to the commutative diagram (\ref{comm1}). 
The morphism $\varphi$ with connected fibers and the finite morphism $p$ 
give the Stein factorization not only for $\psi$ but also for $\psi \circ g $. 
By the uniqueness of the Stein factorization, 
the automorphism $g \in \Image{\rho}$ induces the automorphism of $G_y$ (which we denote by $\bar g$). 
This determines the homomorphism  
$$
\bar \rho: \pi_1(Y) \xrightarrow{\quad \rho \quad}  \Aut{F} \xrightarrow{\quad   \quad}  \Aut{G_y}. 
$$
By the same argument as in Step \ref{step4}, 
we can conclude that $ \pi_1(Y)/\Ker{ \bar \rho}\cong \Image{\bar \rho}$ is a finite group 
since $p$ is a finite morphism. 
Now we define $Y'$ (resp. $X'$) by the quotient of $\bar Y$ (resp. $\bar X$) corresponding to $\Ker{ \bar \rho}$. 
Then we have the following diagram: 
\[\xymatrix{
\bar{Y} \times F \cong \bar{X}\ar[d] \ar[r]\ar@{}[dr] & 
X' := \bar X / \Ker{\bar \rho}\ar[r] \ar[d]^{\phi'}
&X  \ar[d]^{\phi}  \ar[r]^{\Phi}& H\\
\bar{Y} \ar[r] &Y' =\bar{Y} / \Ker{\bar \rho}\ar[r]&Y. \\
}\]
By the construction, 
the morphism $Y' \to Y$ is finite and 
the induced morphism $\phi': X' \to Y'$ is the fiber product of $X$ by $Y' \to Y$ (see the proof of Claim \ref{claim1}). 
Hence, by replacing $\phi:X \to Y$ with $\phi': X' \to Y'$, 
we may assume that any loop in $Y$ trivially acts on $G_y$ (that is, $\Image {\bar \rho}=\{\id_{G_y}\}$).

By applying the result of \cite{EG19} to  our new morphism $\phi: X \to Y$ again (see Step \ref{step2}), 
we can see that the restriction map of sections of $-mK_{X/Y}$ to $X_y$ is injective. 
Hence it is sufficient to prove that any section in $H^{0}(X_y, -mK_{X_y})$ 
can be extended to a section  in $H^{0}(X, -mK_{X/Y})$. 
For a given section $t \in H^{0}(X_y, -mK_{X_y})$, 
we can obtain the extended section $\bar t=\pr_2^* t  \in H^{0}(\bar X, -mK_{\bar X/\bar Y}) $ 
by identifying $t$ with the section in $H^{0}(F, -mK_{F}) $. 
On the other hand, we can take $u \in H^{0}(G_y, B)$ such that 
$t=\varphi^*u$ by the definition of $\varphi$ and $B$. 
If a loop $\gamma$ in $Y$ is in $\Ker{\bar \rho}$, 
the induced automorphism $\bar g$ trivially acts on $G_y$, 
where  $g:=\rho(\gamma)$. 
Hence $\bar t=\pr_2^* t=\pr_2^* \varphi^*u$ is invariant under the actions of $\pi_1(Y)$. 
This implies that the extended section $\bar t=$ can be descended to the section in 
$H^{0}(X, -mK_{X/Y}) $. 
Therefore $\phi: X \to Y$ satisfies that $\phi_*(mL) = Y \times H^{0}(F, -mK_F)$.  

\end{step}

\begin{step}[Proof of  Theorem \ref{thm:4_intro} (2)]\label{step6}
In this step, we finally give a proof of Theorem \ref{thm:4_intro} (2). 
Our strategy is to reduce our situation to 
the case of $K_{X/Y}=\mathcal{O}_X$, 
in which the assertion holds by \cite[Theorem 5.8]{LPT18} (see\cite{Dru17}). 

By  Step \ref{step5}, we may assume that $\phi_*(mL) $ is a trivial vector bundle. 
In this case, since the morphism $p: G_y \to H$ is isomorphic, 
we identify $\varphi: X_y \to G_y$ with the restriction of $\Phi: X \to H$ to the fiber $X_y$. 
For the fiber $P_h:=\Phi^{-1}(h)$ at a point $h \in H$, 
the intersection $P_h \cap X_y$ coincides with the fiber of $\varphi: X_y \to G_y$ at $h \in H$. 
In particular, for a general point $h \in H$, 
the fiber $P_h$ is a smooth projective variety and 
the restriction $\phi|_{P_h}: P_h \to Y$ is also a surjective morphism with connected fibers. 
Note that the irregularity of a general fiber $\phi|_{P_h}: P_h \to Y$ is equal to zero 
by the assumption of Theorem \ref{thm:4_intro} (2). 
Further its relative canonical bundle $mK_{P_h/Y}$ is trivial. 
Indeed, since $P_h$ is a fiber of the semi-ample fibration associated to $mL$, 
we have $mL|_{P_h}=\mathcal{O}_{P_h}$ and $K_X |_{P_h}=K_{P_h}$, 
and thus we obtain $mK_{P_h/Y}=\mathcal{O}_{P_h}$. 
Hence, for a general point $h \in H$, the algebraic fiber space $P_h \to Y$ has the structure of the product 
by taking  the base change by a finite \'etale cover $Y' \to Y$ 
since the restriction $\phi|_{P_h}: P_h \to Y$ satisfies the assumptions of 
\cite[Theorem 5.8]{LPT18}. 

On the other hand, for an automorphism $g=\rho(\gamma) \in \Image{\rho}$, 
the induced automorphism $\bar g \in \Image{\rho}$ trivially acts on $H$ in our case, 
and thus $g $ preserves the fiber $P_h \cap F$. 
This implies that the restriction  $g$ to $P_h \cap F$ gives 
the automorphism $g_h:= g|_{P_h \cap F} \in \Aut{P_h \cap F}$. 
This determines the homomorphism 
$$
\rho_h: \pi_{1}(Y)  \xrightarrow{\quad \rho \quad} \Image{\rho}  \xrightarrow{\quad  \quad}  \Aut{P_h\cap F} 
\text{ defined by } \gamma \mapsto g=\rho(\gamma) \mapsto g_h. 
$$
Our goal is to prove that $\Image{\rho}$ is a finite group (see Claim \ref{claim1}). 
By the above argument with \cite[Theorem 5.8]{LPT18} and Claim \ref{claim1}, 
we have already shown that $\Image{\rho_h}$ is a finite subgroup in $\Aut{P_h\cap F} $
for a general point $h$. 
Therefore it is sufficient for our goal to show that 
$\Image{\rho}  \to  \Aut{P_h\cap F}$ is injective for some $h \in H$. 
This is proved by the following claim: 

\begin{claim}\label{claim2}
For an automorphism $g \in \Image{\rho}$ induced by a loop in $Y$ and $\rho$, 
the subset $A_g \subseteq H$ defined by 
$$
A_g:=\{ h \in H \,|\, g_h = \id_{P_h \cap F}\}. 
$$
is a Zariski-closed subset in $H$. 
Further, if $g \not = \id_{F}$, then $A_g$ is 
a proper  Zariski-closed subset in $H$. 
\end{claim}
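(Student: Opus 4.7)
My plan is to prove the two assertions of the claim separately. The inequality $A_g \subsetneq H$ when $g \ne \id_F$ is the easier half and I would dispose of it first; the Zariski-closedness of $A_g$ is the technical heart.

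For $A_g \subsetneq H$, consider the scheme-theoretic fixed locus $F^g \subseteq F$, defined as the preimage of the diagonal $\Delta_F \subseteq F \times F$ under $(\id_F, g)\colon F \to F \times F$; this is a closed subscheme of $F$. Since $F$ is a smooth projective variety it is irreducible, so whenever $g \ne \id_F$ we have $F^g \subsetneq F$, and $F \setminus F^g$ is a non-empty open subset. Picking any $x_0 \in F \setminus F^g$ and setting $h_0 := \varphi(x_0)$, the point $x_0 \in \varphi^{-1}(h_0)$ has $g(x_0) \ne x_0$, so $g_{h_0} \ne \id$ and $h_0 \notin A_g$. This gives $A_g \subsetneq H$.

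For Zariski-closedness, the plan is to rewrite $A_g = \{h \in H : \varphi^{-1}(h) \subseteq F^g\}$ and compare cohomological invariants across the families $F \to H$ and $F^g \to H$. Fix a $\varphi$-relatively very ample line bundle $\mathcal{L}$ on $F$ (which exists since $\varphi$ is projective). For $n$ sufficiently large, uniform Serre vanishing yields coherent sheaves $\mathcal{E}_n := \varphi_*(\mathcal{L}^{\otimes n})$ and $\mathcal{E}^g_n := (\varphi|_{F^g})_*(\mathcal{L}^{\otimes n}|_{F^g})$ on $H$ that both commute with base change, and the canonical restriction morphism $\alpha_n\colon \mathcal{E}_n \to \mathcal{E}^g_n$ will be surjective on every fiber. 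By the separation-of-subschemes property of very ample linear systems, $\alpha_n \otimes k(h)$ is an isomorphism if and only if $\varphi^{-1}(h) = \varphi^{-1}(h) \cap F^g$, i.e., $h \in A_g$. Equivalently, $A_g$ is the coincidence locus of the two upper semi-continuous fiber-dimension functions $h \mapsto \dim_{k(h)} \mathcal{E}_n(h)$ and $h \mapsto \dim_{k(h)} \mathcal{E}^g_n(h)$, the first always dominating the second by surjectivity of $\alpha_n$.

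The main obstacle will be converting this equality into Zariski-closedness when $\varphi$ is not globally flat: in that case $\mathcal{E}_n$ itself may fail to be locally free and $\dim_{k(h)} \mathcal{E}_n(h)$ can jump upward on closed subsets. My plan is to stratify $H$ by the value of $\dim_{k(h)} \mathcal{E}_n(h)$---each stratum being locally closed by upper semi-continuity---and on each stratum the remaining condition that $\dim_{k(h)} \mathcal{E}^g_n(h)$ meets the prescribed value of $\mathcal{E}_n$ becomes a closed condition from upper semi-continuity of $\mathcal{E}^g_n$ alone. Assembling the resulting closed sets across this finite stratification yields $A_g$ as a Zariski-closed subset of $H$. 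An alternative and equivalent route would be to flatten $\varphi$ by a Raynaud--Gruson-type birational modification $\tilde H \to H$, run the argument over $\tilde H$ using the relative automorphism group scheme of the flattened family (whose separatedness makes the equalizer of the two sections given by $g$ and $\id$ automatically closed), and then descend to $H$ using properness of $\tilde H \to H$.
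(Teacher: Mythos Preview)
Your argument for $A_g\subsetneq H$ when $g\neq\id_F$ is correct and is exactly what the paper does.

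For Zariski-closedness the paper takes a much shorter and more elementary route than yours. In your notation $F^g=\mathscr B$, the paper simply considers the proper morphism $\varphi|_{\mathscr B}\colon\mathscr B\to H$ and observes that $h\in A_g$ amounts to the fiber $\mathscr B_h$ having dimension at least $m:=\dim F-\dim H$; the set $\{h\in H:\dim\mathscr B_h\ge m\}$ is then closed by upper semi-continuity of fiber dimension. No pushforwards, no base change, no cohomology.

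Your pushforward approach, by contrast, has a genuine gap at the ``assembling'' step. Even if one grants the base-change compatibility you assume (which already requires justification without flatness), the stratification argument does not produce a closed set: you only get $A_g\cap S_k$ closed in each locally closed stratum $S_k$, and a finite union of locally closed sets need not be closed. Abstractly you are asserting that $\{h:\dim\mathcal E_n(h)=\dim\mathcal E^g_n(h)\}$ is closed whenever both functions are upper semi-continuous and the first dominates the second, but this is false: on $\mathbb A^1$ take a surjection $\mathcal O\oplus k(0)\twoheadrightarrow\mathcal O$, so the fiber dimensions are $1+\mathbb 1_{\{0\}}$ and $1$, and the equality locus is $\mathbb A^1\setminus\{0\}$. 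Your flattening alternative has the analogous problem: after a modification $\tilde H\to H$ the fibers over the exceptional locus are no longer the original $F_h$, so the closed set you obtain upstairs is not the preimage of $A_g$ and cannot be pushed down by properness alone. The paper's fiber-dimension argument sidesteps all of this.
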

\begin{proof}
It is easy to check the latter conclusion. 
Indeed, in the case of $A_g=H$, 
we have $g_h=g|_{P_h \cap F} $ for any $h \in H$, 
and thus $g$ trivially acts on $F$ (that is, $g = \id_{F}$). 

For a fixed $g  \in \Image{\rho}$, 
we define the (not necessarily irreducible)  subvariety $\mathscr{B}$ by 
$$
\mathscr{B}:=\{x \in F \,|\, g(x)=x\} \subseteq F. 
$$
Let $m$ be the relative dimension of $\varphi: F \to H$. 
Now we consider the morphism $\varphi|_{\mathscr{B}}: \mathscr{B} \to H$. 
We remark that the condition of $h \in A_g$ is equivalent to 
the condition of $\dim (\mathscr{B}_h )=m$,  
where $\mathscr{B}_h$ is the fiber at $h\in H$. 
On the other hand, we have 
$$
m \geq \dim (\mathscr{B} \cap F_h) = \dim (\mathscr{B} \cap P_h \cap F) 
\text{ \quad and \quad } 
\mathscr{B}_h =\mathscr{B} \cap P_h \cap F
$$
for a general $h \in H$. 
Further it can be shown that 
$$
\{ h \in H \, | \,  \dim( \mathscr{B}_h) \geq m\}
$$
is a Zariski-closed set 
by the upper semi-continuity of fiber dimensions. 
This completes the proof. 
\end{proof}

The above claim  finishes the proof of  Theorem \ref{thm:4_intro} (2). 
Indeed, let $A_0$ be a (proper) Zariski-closed set in $H$ such that the fiber $X_h$ at $h \not \in A_0$ is a projective manifold. 
Note that $\pi_1(Y)$ is an (at most) countable set 
since any loop can be approximated by polygonal lines. 
Then, for a very general point $h$ with $h \not \in A_0 \cup \cup_{\id\not= g \in \Image{\rho}} A_g$, 
the map $\Image{\rho}  \to  \Aut{P_h\cap F}$ is injective by the definition of $A_g$. 
Therefore $\Image{\rho_h}$ is a finite subgroup, and thus so is $\Image{\rho}$ . 
\end{step}
\end{proof}
\section{Examples} \label{section:ex}
In this section, we construct examples 
of algebraic fiber spaces having nef (semipositive, or semi-ample) relative anti-canonical divisors, 
which helps us to understand this paper.  

\begin{ex}[Algebraic fiber spaces with semi-ample $-K_{X/Y}$] \label{ex:semi-ample}
This example tells us that Theorem \ref{thm:4_intro} does not hold 
without taking \'etale covers. 

Let $Y$ be a smooth projective curve of genus at least one, 
and let $D$ be a divisor on $Y$ such that $\deg D\ge0$ and $D\not\sim0$. 
Set $\mathcal E:=\mathcal O_Y \oplus \mathcal O_Y(-D)$ 
and $X:=\mathbb P(\mathcal E)$. 
Let $\phi:X=\mathbb P(\mathcal E)\to Y$ denote the natural projection. 
Let $C_0 \subseteq X$ be the section corresponding to the quotient 
$\mathcal E \twoheadrightarrow \mathcal O_Y(-D)$. 
Then $ -K_{X/Y} \sim 2C_0 +\phi^*D$ (cf. \cite[V, Lemma~2.10]{Har77}), 
and thus it follows from the projection formula that, for each $m\ge0$, 
\begin{align}
\phi_*\mathcal O_X(-mK_{X/Y})
& \cong \phi_*\mathcal O_X(2mC_0)  \otimes \mathcal O_Y(mD)
\cong S^{2m}(\mathcal E)\otimes \mathcal O_Y(mD) \notag
 \\ & \cong \left(\bigoplus_{i=0}^{2m}\mathcal O_Y(-iD) \right) 
\otimes \mathcal O_Y(mD)
\cong \bigoplus_{i=0}^{2m}\mathcal O_Y((m-i)D). 
\label{cong:1}
\end{align}

If $D$ is a torsion element in $\mathrm{Cl}(X)=\mathrm{Div}(X)/\sim_{\mathrm{lin}}$ of order $n$,  
then we have 
\begin{itemize}
\item[(i)] 
$-K_{X/Y}$ is nef, since $\phi^*D$ is nef and $ C_0^{\cdot 2} =-\deg D =0 $, 
\item[(i\hspace{-1pt}i)] 
$2nC_0\in |-nK_{X/Y}|$, and 
\item[(i\hspace{-1pt}i\hspace{-1pt}i)] 
there is $E \in |-nK_{X/Y}|$ with $E \ne nC_0$, since $\dim|-nK_{X/Y}|=2$ by (\ref{cong:1}). 
\end{itemize}
This implies that $|-nK_{X/Y}|$ is free, and thus $-K_{X/Y}$ is semi-ample.
However $\phi:X=\mathbb P(\mathcal E)\to Y$ itself is not a product 
and the direct images $\phi_*-mK_{X/Y}$ are not trivial. 

We can directly check the assertion of Theorem \ref{thm:4_intro} for this example. 
Indeed, the $n$-torsion divisor $D$ defines an \'etale cyclic cover $\pi:Z\to Y$ such that $\pi^*D \sim 0$. 
Hence $\pi^*\mathcal E \cong \mathcal O_Z \oplus \mathcal O_Z$, 
and $X\times_Y Z$ is isomorphic to $\mathbb P^1\times_{\mathbb C}Z$. 
\end{ex}
\begin{ex}[Algebraic fiber spaces with semipositive but not semi-ample $-K_{X/Y}$]
\label{ex:nef but not semi-ample}
Let the notation be as in Example~\ref{ex:semi-ample}. 
If $D$ is a divisor such that $\deg D=0$ and that $D$ is not a torsion element in $\mathrm{Cl}(X)$,  
then we see that $-K_{X/Y}$ is semipositive 
(that is, it admits a smooth hermitian metric with semipositive curvature),  
but it is not semi-ample by 
$$
0 \overset{\textup{by (\ref{cong:1})}}{=} \kappa(-K_{X/Y}) 
< \mathrm{nd}(-K_{X/Y}) =1.
$$ 
Then it is easy to see that the direct images 
$$
\phi_{*}(-mK_{X/Y}) \cong \bigoplus_{i=0}^{2m}\mathcal O_Y((m-i)D)
$$
are hermitian flat by $c_1(\mathcal O_Y(-D))=0$. 

\end{ex}
\begin{ex}[Lc pairs $(X, D)$ with semi-ample and big \textup{$-(K_{X/Y}+D)$}] 
\label{ex:nef big but not ample}
This example says that 
Theorem \ref{thm:4_intro} does not hold for an lc pair $(X, D)$ 
even if $-(K_{X/Y}+D)$ is semi-ample and big. 
It is known that $\mathbb{B}_{+}(-(K_{X/Y}+D))=X$ holds if $(X, D)$ is klt and if $-(K_{X/Y}+D)$ is nef, 
but this example also says that it is not true for lc pairs. 
However the statement of Corollary~\ref{cor:dominate} 
(that is, $\mathbb{B}_{+}(-(K_{X/Y}+D))$ is dominant over $Y$) 
is still true for lc pairs.

Let the notation be as in Example~\ref{ex:semi-ample}. 
Let $C_1$ be the section of $\phi$ 
corresponding to the quotient $\mathcal E\twoheadrightarrow\mathcal O_Y$. 
Then $C_1 \sim C_0 +\phi^*D$ and $C_0\cdot C_1 =0$ 
(cf. \cite[V, Proposition~2.9]{Har77}). 
We consider the lc pair $(X,C_1)$. Put $L:=K_{X/Y}+C_1$. 
Then we have 
$$
-L \sim 2C_0+\phi^*D -C_1 \sim C_0,$$ 
and thus $-L$ is nef.
Assume that $\deg D>0$. 
Then $C_0^{\cdot 2}>0$, so $-L$ is big. 
We have $\mathbb B_+(-L)=C_1$ thanks to \cite[Theorem~C]{ELMNP2}. 
Further $-L$ is semi-ample by  
$$
\mathbb B(-L)\subseteq C_0 \cap \mathbb B_+(-L) =C_0 \cap C_1 = \emptyset.
$$  
Note that there is no finite cover $\pi:Z\to Y$ of $Y$ 
such that $X\times_Y Z$ is isomorphic to $\mathbb P^1 \times_{\mathbb C}Z$ as $Z$-schemes. 

\end{ex}

\begin{ex}\label{kodaira}
This example tells us that Theorem \ref{thm:4_intro} does not hold for K\"ahler manifolds. 

Let $(z, w)$ be the standard coordinate of $\mathbb{C}^2$. 
We consider the compact complex torus $X:=\mathbb{C}^2/\Gamma$ 
defined by the lattice generated by 
$$
(0, 1), (0,\tau), (\tau, 0), (1, \alpha). 
$$
Here $\alpha:=a+\tau b$, 
$\tau$ is a complex number whose imaginary part is non-zero, and $a, b$ are real numbers with $0 \leq a, b \leq 1$. 
Note that $X$ is always K\"ahler, but not projective for general $a, b$.  
Then the natural first projection  $X \to Y$ to 
the elliptic curve $Y:=\mathbb{C}/\langle 1, \tau\rangle$ defined by $1$ and $\tau$ 
is  a locally trivial morphism 
with the fiber that is isomorphic to $\mathbb{C}/\langle 1, \tau\rangle$. 
Further its relative canonical bundle is trivial. 
In particular,  the morphism $X \to Y$ satisfies the assumptions of \cite[Theorem 5.8]{LPT18} (see \cite{Dru17}). 
However $X$ is projective if $X$ is a product up to finite \'etale covers. 
This is a contradiction to the case of $a, b$ being general. 

Theorem \ref{thm:4_intro} was proved 
for morphisms with trivial relative canonical bundle in \cite[Lemma6.4]{Dru17}, 
in which the key point is the existence of fine moduli of polarized abelian varieties with level structures. 
The lack of fine modulus causes this example. 
\end{ex}

As in \cite{HM07, CH19, EG19, CCM}, 
it is natural to consider an application of our argument
to maximally rationally connected (MRC) fibrations, 
but we do not deal with it, 
because this paper only treat morphisms. 
Recall that, by definition, the MRC fibrations associated to a variety
are almost holomorphic maps, 
which may not be represented by a morphism. 
In fact, the authors leaned from Kento Fujita 
that there exists a normal projective variety whose MRC fibrations 
cannot be represented by a morphism (Example~\ref{ex:Fujita's example}). 
The authors do not know such varieties except this example. 
Fujita constructed a normal projective 3-fold $X'$ with Picard number one 
that is uniruled but \textit{not} rationally connected. 
\begin{ex} \label{ex:Fujita's example}
Let $S$ be a smooth projective surface such that 
\begin{itemize}
\item $\rho(S)=1$ and 
\item the MRC fibration associated to $S$ is represented by $\mathrm{id}:S\to S$. 
\end{itemize}
For example, a very general Abelian surface or hypersurface of degree $d\ge 4$ in $\mathbb P^3$ 
satisfies the above conditions. 
Take a very ample divisor $\mathcal L$ on $S$ and 
a smooth curve $C$ in $|\mathcal L|$. 
We define the notation by the following commutative diagram
whose all squares denote base changes:
\begin{align}
\xymatrix{ C_X \ar[r] \ar[d] \ar@/_30pt/[dd]_-\sigma^-\cong & B_X \ar[d] & \\
F_X \ar[d] \ar[r] & X \ar[d]^-{p_1} \ar[r]^-{p_2} & \mathbb P^1 \ar[d] \\
C \ar[r] & S \ar[r] & \mathrm{Spec}\,\mathbb C
}
\end{align}
Here, $B_X$ is a general member in the complete linear system 
of $\mathcal M_k:=p_1^*\mathcal L^k \otimes p_2^*\mathcal O(1)$ for $k\ge 2$. 
We have $\mathcal N_{C_X/X}\cong \mathcal M_k|_{C_X} \oplus \sigma^* \mathcal L$. 
 
Let $\pi:Y\to X$ be the blow-up along $C_X$ and 
consider the elemental transformation
\begin{align}
\xymatrix{ & Y \ar[dr]^-\tau \ar[dl]_-\pi & \\
	X \ar[dr]_-{p_1} & & Z \ar[dl]^-p \\
	  & S. &
}
\end{align}
Let $E$ denote the exceptional divisor of $\pi$. 
Set $B:=\pi^{-1}_*B_X$ and $F:=\pi^{-1}_*F_X$.  Since 
\begin{align}
B -F \sim \pi^*(B_X -F_X) \in \pi^*|\mathcal M_{k-1}| 
\end{align}
and $B\cap F=\emptyset$, we see that $|B|$ is free, 
and thus it defines a morphism $\varphi:Y\to X'$ with $\varphi_*\mathcal O_Y \cong \mathcal O_{X'}$, 
which contracts $F$ to a point. 

We show that $\varphi$ induces an isomorphism from 
$Y\setminus F$ to $X'\setminus\varphi(F)$. 
Suppose that there is a curve $\gamma\subseteq Y$ such that $\gamma\not\subseteq F$ and $\varphi(\gamma)$ is a point. 
Then we have 
$$
0\ge-\gamma\cdot F=\gamma\cdot (B-F)=\gamma\cdot\pi^*\mathcal M_{k-1}
=\pi(\gamma)\cdot\mathcal M_{k-1}. 
$$
Then it follows that $\pi(\gamma)$ is one point since $\mathcal M_{k-1}$ is ample. 
Therefore $\gamma\subseteq E\cong \mathbb P(\mathcal N_{C_X/X}^*)$, 
and the choice of $\gamma$ implies that it is contained in  
the section $F\cap E \cong \mathbb P(\sigma^*\mathcal L^*)\subseteq \mathbb P(\mathcal N_{C_X/X}^*)$, 
so $\gamma \subseteq F$, which is a contradiction.

Hence $\varphi:Y\to X'$ contracts every fiber of $\tau:Y\to Z$ to a point, 
and thus $\varphi$ factors through $Z$. 
The induced morphism $\psi:Z\to X'$ is not an isomorphism, 
since $\tau(F)$ is not a point. 
Then we have 
\begin{align}
	\rho(X')\le\rho(Z)-1 =\rho(X)-1 = 2-1 =1, 
\end{align}
so we obtain $\rho(X')=1$. 
The MRC fibration associated to $X'$ is the composite of 
$X' \dashrightarrow X\xrightarrow{p_1} S.$
\end{ex}

\renewcommand{\thesection}{\Alph{section}} \setcounter{section}{0}
\section{Appendix} \label{section:appendix}

\subsection{Proof of the statements in subsection \ref{subsection:semistability}} \label{subsection:analytic proof1}

In this subsection, 
we give a proof for Proposition \ref{r-c10}, 
Theorem \ref{r-viehweg}, and \ref{r-thm:locallytrivial}. 


\begin{proof}[Proof of Proposition \ref{r-c10}]
We consider the direct image of the divisor 
\begin{align*}
pK_{X/Y} + qN \Delta +L=(mN+p)K_{X/Y}
\overbrace{-mN(K_{X/Y} +\Delta )}
^{\textup{with  $g_{\varepsilon'}^{m}$}} +
\overbrace{(m+q)N \Delta }^{\textup{with   $h_{\Delta}^{(m+q)N} $}} +
\overbrace{L}^{\textup{with  $h_L^{1-\varepsilon}H_L^{\varepsilon}$}}
\end{align*}
equipped with singular metrics defined as follows: 
The metric $g_{\varepsilon}$, which is obtained from the assumption of the restricted base locus, 
is a singular metric on $-N(K_{X/Y}+\Delta)$ such that 
$\sqrt{-1}\Theta_{ g_{\varepsilon'} } \ge - \varepsilon' \omega_{X}$ and 
$g_{\varepsilon}|_{X_y}$ is smooth on $X_y$ for a general point $y \in Y$. 
The metric $h_{\Delta}$ is the canonical singular metric on $\Delta$. 
The metric $H_L$, which is obtained from the $\phi$-bigness of $L$, 
is a singular metric on $L$ so that 
$\sqrt{-1}\Theta_{H_L} + k \phi^{*}\omega_Y  \geq \delta \omega_X
$
for some $k$ and $\delta>0$. 
Then $h := g_{\varepsilon}^{m}h_{\Delta}^{(m+q)N} h_{L}^{1-\varepsilon}H_{L}^{\varepsilon}$ 
satisfies that 
\begin{align*}
\sqrt{-1}\Theta_{h}
=&-\varepsilon'm \omega_X + (1-\varepsilon)\phi^{*}\theta+
 \varepsilon (\delta \omega_X-k\phi^{*}\omega_Y)\\
\geq&  (1-\varepsilon)\phi^{*}\theta - \varepsilon k \phi^{*}\omega_Y
\end{align*}
for $\varepsilon \gg \varepsilon'>0$. 
On the other hand, for a sufficiently large $m \in \mathbb{Z}_{>0}$, 
we can see that 
$ \I{h^{1/(mN+p)}|_{y}} = \mathcal{O}_{X_y}$ for a  general $y \in Y$,  
since  we have $\I{h_{L}^{1/mN+q}}=\I{H_{L}^{1/mN+q}}=\mathcal{O}_{X}$, 
$g_{\varepsilon'}$ is smooth on $X_y$, and
$\bigl(X, \frac{(m+q)N}{mN+p}\Delta \bigr)$ is a klt pair.
The induced metric on the direct image sheaf 
satisfies the desired conclusion  
(see \cite[Lemma 5.25]{CP17}, \cite{PT18, BP08}.)
\end{proof}

\begin{proof}[Proof of Theorem \ref{r-thm:locallytrivial}] 
By \cite[Proposition 2.7, Theorem 2.8]{CCM} and Proposition \ref{nonvert}, 
it is sufficient to show that 
$\phi_{*}(p\tilde{A}) $ and $\phi_{*}(qN\Delta + p\tilde{A}) $ 
are numerically flat vector bundles 
for any $p \in \mathbb{Z}_{>0}$ and any $q \in \mathbb{Z}_{\ge 0}$. 
The proof of the numerical flatness 
is the same as in \cite{Cao19, CH19, CCM}, 
but we give the proof here for reader's convenience.

Let $A$ be a sufficiently ample divisor on $X$. 
Set $r := \rank(c_1(\phi_{*}A))$ and
$$
\tilde{A} := A - \frac{1}{r}\phi^{*}(\det \phi_{*}(A)).
$$
Note that we may assume that 
$\phi_{*}(A) $ is locally free by Proposition \ref{flatness} 
and $\tilde{A}$ is a Cartier divisor by \cite[Prop 3.9]{CH19}.

Now we prove that 
$$
\mathcal{V}_{m,q,p}:= \phi_{*}(-mN(K_{X/Y}+\Delta )+qN\Delta+ p\tilde{A})
$$ 
is numerically flat 
for any $m,q \in  \mathbb{Z}_{\ge0}$ and any $p \in \mathbb{Z}_{>0}$, 
by applying Theorem \ref{r-viehweg} which is proved later. 
It follows that $\mathcal{V}_{m,q,p}$ is weakly positively curved 
(and thus $c_1( \mathcal{V}_{m,q,p})$ is pseudo-effective) 
from Theorem \ref{r-viehweg} and Proposition \ref{r-c10}. 
By Theorem \ref{r-viehweg}, there exists a $\phi$-exceptional effective divisor $E$ 
such that 
$$
-mN(K_{X/Y}+\Delta )+qN\Delta+p\tilde{A}+ E - \frac{1}{r_{m,q,p}}\phi^{*}(\det \mathcal{V}_{m,q,p})
$$
is pseudo-effective. By applying Proposition \ref{r-c10} 
to $\theta$ representing the first Chern class $c_{1}(\mathcal{V}_{m,q,p})$ and 
the divisor 
$$
p\tilde{A}+E
 =  mN K_{X/Y}+(m-q)N\Delta + (-mN(K_{X/Y}+\Delta )+qN\Delta+p\tilde{A}+ E),
$$
we obtain $c_1(\phi_{*}( p\tilde{A} )) \ge \frac{r_{0,0,p}}{r_{m,q,p}}c_1(\mathcal{V}_{m,q,p})$.
This implies that $c_1(\mathcal{V}_{m,q,p}) = 0$
from  $0 = c_1(\phi_{*}( \tilde{A} )) \ge \frac{1}{pr_{0,0,p}}c_1(\mathcal{V}_{0,0,p})$ by Theorem \ref{r-viehweg} and Proposition \ref{r-c10}. 




\end{proof}


\begin{proof}[Proof of Theorem \ref{r-viehweg}]
The basic idea is the same as in \cite[Proposition 3.15]{Cao19}. 
Note that $\phi$ is smooth in codimension 1 by Proposition \ref{flatness}.
Hence there exists a Zariski-closed set $Z \subseteq X$ such that $\codim Z \ge 2$
and $\phi|_{X \setminus Z}: X \setminus Z \rightarrow Y \setminus \phi(Z)$ is smooth.


Let $X^{(r)}$ be a desingularization of 
the $r$-times fiber product $ X \times_{Y}X\times_{Y} \cdots \times_{Y} X $.
Let $\pr_i : X^{(r)} \rightarrow X$ be the $i$-th projection and 
$\phi^{(r)}: X^{(r)} \rightarrow Y$ be the induced morphism.
Set $V := X^{(r)} \setminus  (\cap \pr_{i}^{-1}(X \setminus Z))$. 

By the same argument as in \cite[Proposition 3.15]{Cao19}, 
there exist effective divisors $D_1$, $D_2$ supported in $V$ such that
$$
K_{X^{(r)}/Y} =\sum_{i=1}^{r} \pr_{i}^{*}K_{X/Y} + D_{1}-D_{2}.
$$
From the assumption on the restricted base locus,  we obtain
$$
\phi^{(r)}\Bigl(\mathbb{B}_{-} \bigl(  \sum_{i=1}^{r} -\pr_{i}^{*} (K_{X/Y} + \Delta)  \bigr)\Bigr)\subseteq 
\cup_{i=1}^{r} \phi^{(r)}\Bigl(\mathbb{B}_{-}\bigl( - \pr_{i}^{*} (K_{X/Y} +\Delta)\bigr ) \Bigr)\neq Y.
$$

For the divisor $L:= \sum_{i=1}^{r}\pr_{i}^{*}L$, 
we have the canonical map
$$
\det(\phi_{*}(L)) \rightarrow \otimes^{r}  \phi_{*}(L) \cong \phi^{(r)}_{*}(L_{r}) \text{ on } Y_L, 
$$
where $Y_L$ be the maximum Zariski-open set such that $\phi_{*}(L)$ is  locally free. 
Hence we have the nonzero section of $\phi^{(r)}_{*}(L_{r}) \otimes \det(\phi_{*}(L))^{ \vee}$ on $Y_L$, 
and thus, by taking double dual, we have
$$
0 \neq  H^{0}(Y_L, \phi^{(r)}_{*}(L_{r})^{\vee\vee} \otimes \det(\phi_{*}(L))^{ \vee}) = H^{0}(Y, \phi^{(r)}_{*}(L_{r})^{\vee\vee} \otimes \det(\phi_{*}(L))^{ \vee}).
$$
By  \cite[III. Lemma 5.10]{Nak}, 
there exists a $\phi^{(r)}$-exceptional effective divisor $E_r$
such that 
\begin{equation}
\label{locfree}
\phi^{(r)}_{*}(L_{r})^{\vee\vee} = \phi^{(r)}_{*}(L_{r} + E_r).
\end{equation}
Thus $ L' := L_{r} + E_r -  { \phi^{(r)}}^{*}\det(\phi_{*}(L))$
is an effective divisor of $X^{(r)}$. 
Since $ L' $ is also $\phi^{(r)}$-big, we have: 
\begin{itemize}
\item $ L' $ has a singular hermitian metric $h_{L'}$ with semipositive curvature current.
\item $ L' $ has a singular hermitian metric $H_{L'}$ such that $\sqrt{-1}\Theta_{L',H_{L'}} \ge \omega_{X^{(r)}} - \phi^{*}\omega_{Y}$ 
for some K\"ahler form $\omega_{X^{(r)}}$  (resp. $ \omega_{Y}$) on $X^{(r)}$ (resp. $Y$). 
\end{itemize}
 We take a sufficiently large $m \in \mathbb{Z}_{>0}$ so that 
 $\mathcal{J}(h_{L'}^{\frac{1}{m}} | _{X^{(r)} _y}) = 
 \mathcal{J}(H_{L'}^{\frac{1}{m}} | _{X^{(r)} _y}) = \mathcal{O}_{X^{(r)} _y}$ for a general point $y \in Y$.
Then we have: 
\begin{claim}
\label{caov}
There exists an ample divisor $A_Y$ of $Y$ such that 
$$
H^{0}(X^{(r)},  mpD_{1}+pL'+{ \phi^{(r)}}^{*}A_Y ) \rightarrow H^{0}(X^{(r)}_{y},  mpD_{1}+pL'+{ \phi^{(r)}}^{*}A_Y |_{X^{(r)}_{y}}) 
$$
is surjective 
for any $p \in \mathbb{Z}_{>0}$ and a general point $y \in Y$. 
\end{claim}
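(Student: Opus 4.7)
The plan is to extend sections of $M_p := mpD_1 + pL' + \phi^{(r)*}A_Y$ from the general fiber $X^{(r)}_y$ to $X^{(r)}$ via an Ohsawa--Takegoshi--type $L^2$ extension theorem for fibrations (in the spirit of \cite{BP08, CP17}, and parallel to the positivity-of-direct-images technique used in the proof of Proposition~\ref{r-c10}). By the usual adjoint reformulation, it suffices to equip the bundle $M_p - K_{X^{(r)}}$ with a singular hermitian metric whose curvature dominates a fixed K\"ahler form on $X^{(r)}$ uniformly in $p$, and whose multiplier ideal restricted to $X^{(r)}_y$ is trivial.

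To do this, I first fix a very ample divisor $A_Y$ on $Y$ (chosen once and for all, independently of $p$) such that $A_Y - K_Y$ admits a smooth hermitian metric with curvature $\ge k\omega_Y$ for a sufficiently large constant $k$. Using the identity $K_{X^{(r)}/Y} = \sum_i \pr_i^* K_{X/Y} + D_1 - D_2$, I rewrite
\[
M_p - K_{X^{(r)}} = (mp-1)D_1 + D_2 + \left(p - \tfrac{1}{m}\right)L' + \tfrac{1}{m}L' - \sum_i \pr_i^* K_{X/Y} + \phi^{(r)*}(A_Y - K_Y),
\]
and construct a metric $H_p$ by combining: the canonical singular metrics on the effective divisors $(mp-1)D_1$, $D_2$, and $(p - \tfrac{1}{m})L'$; the mixed metric $h_{L'}^{(1-\varepsilon)/m} H_{L'}^{\varepsilon/m}$ on the remaining $\tfrac{1}{m}L'$, contributing strictly positive curvature $\ge \tfrac{\varepsilon}{m}(\omega_{X^{(r)}} - \phi^{(r)*}\omega_Y)$; the metric $g_{\varepsilon'}$ on $-\sum_i \pr_i^*K_{X/Y}$ coming from the non-dominance of $\mathbb{B}_{-}(-(K_{X/Y}+\Delta))$, smooth on general fibers and of curvature $\ge -\varepsilon'\omega_{X^{(r)}}$; and the fixed positively curved metric on $\phi^{(r)*}(A_Y - K_Y)$. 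Choosing $\varepsilon' \ll \varepsilon/m$ and $k$ sufficiently large, the total curvature of $H_p$ is bounded below by a fixed K\"ahler form, uniformly in $p$.

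The main obstacle will be verifying that the multiplier ideal $\mathcal{J}(H_p|_{X^{(r)}_y})$ equals $\mathcal{O}_{X^{(r)}_y}$, since \emph{a priori} the canonical metric on the large multiple $(p - \tfrac{1}{m})L'$ contributes a multiplier ideal $\mathcal{O}_{X^{(r)}}(-\lfloor (p-\tfrac{1}{m})L'\rfloor)$ whose restriction to the fiber grows with $p$. To overcome this, I will appeal to the H\"older--Skoda rearrangement already used in the proof of Proposition~\ref{r-c10} (cf.\ \cite[Lemma 5.25]{CP17}): by testing the relevant $L^2$-integrability with H\"older exponent $mp$, the $L'$-portion of the metric enters only through a $\tfrac{1}{m}$-th root when restricted to the fiber, so the multiplier-ideal condition reduces to the standing hypothesis $\mathcal{J}(h_{L'}^{1/m}|_{X^{(r)}_y}) = \mathcal{J}(H_{L'}^{1/m}|_{X^{(r)}_y}) = \mathcal{O}_{X^{(r)}_y}$ guaranteed by the choice of $m$; the canonical metrics of $D_1, D_2$ and the metric $g_{\varepsilon'}$ contribute nothing on the general fiber since $D_1 \cup D_2 \subseteq V$ misses $X^{(r)}_y$ and $g_{\varepsilon'}$ is smooth there. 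Once the trivial multiplier ideal is established, the Ohsawa--Takegoshi extension theorem yields the desired surjective restriction map, valid for every $p \in \mathbb{Z}_{>0}$ and the chosen $A_Y$.
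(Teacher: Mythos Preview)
Your overall strategy and your identification of the main obstacle are both correct; the gap is in how you resolve that obstacle. You have chosen the single--adjoint decomposition $M_p = K_{X^{(r)}} + (M_p - K_{X^{(r)}})$. In that framework the Ohsawa--Takegoshi extension theorem requires $\mathcal{J}(H_p|_{X^{(r)}_y}) = \mathcal{O}_{X^{(r)}_y}$ with \emph{no} further root taken. Since $L'$ is $\phi^{(r)}$-big, it has horizontal components, so the canonical metric on $(p-\tfrac{1}{m})L'$ genuinely produces a nontrivial multiplier ideal on the general fibre once $p\ge 2$. No ``H\"older--Skoda rearrangement'' can remove this: the $L^2$ integrand in the single-canonical extension theorem is $|f|^2 e^{-\varphi}$, not $|f|^2 e^{-\varphi/(mp)}$, and H\"older's inequality only bounds such integrals from above, which is the wrong direction for showing integrability. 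The phenomenon you recall from Proposition~\ref{r-c10} works there \emph{only} because the bundle is written as $(mN+p)K_{X/Y}+(\text{twist})$, and the Bergman-kernel/direct-image machinery for pluricanonical bundles automatically tests the twist metric to the $\tfrac{1}{mN+p}$-th power.

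This is exactly what the paper does. It writes
\[
mpD_1 + pL' + \phi^{(r)*}A_Y \;=\; mp\,K_{X^{(r)}/Y} \;+\; \Bigl(-mp\textstyle\sum_i\pr_i^*(K_{X/Y}+\Delta) + mp\textstyle\sum_i\pr_i^*\Delta + mpD_2 + pL' + \phi^{(r)*}A\Bigr) \;+\; \phi^{(r)*}A
\]
with $A_Y=2A$, absorbing $mpD_1$ into $mpK_{X^{(r)}/Y}$ via the identity $K_{X^{(r)}/Y}=\sum_i\pr_i^*K_{X/Y}+D_1-D_2$, and then invokes Cao's \emph{pluricanonical} extension theorem \cite[Theorem~2.10]{Cao19}, which only requires $\mathcal{J}(h^{1/(mp)}|_{X^{(r)}_y})=\mathcal{O}_{X^{(r)}_y}$. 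Under that $\tfrac{1}{mp}$-th root, the metric $h_{L'}^{p-1}H_{L'}$ on $pL'$ contributes exponents at most $\tfrac{1}{m}$, and the standing choice of $m$ gives triviality. A secondary omission in your write-up: the metric coming from the hypothesis on $\mathbb{B}_-(-(K_{X/Y}+\Delta))$ lives on $-\sum_i\pr_i^*(K_{X/Y}+\Delta)$, not on $-\sum_i\pr_i^*K_{X/Y}$; you must also include $\sum_i\pr_i^*\Delta$ with its canonical metric and use the klt hypothesis, as the paper does.
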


\begin{proof}

We first take an ample divisor $A$ of $Y$ so that 
 $A$ admits a smooth hermitian metric $ h_{A}$ such that  $\sqrt{-1}\Theta_{h_{A}} - \omega_{Y} >0$. 
Set $A_Y := 2A$. 
For any $p \in \mathbb{Z}_{>0}$, 
we have
\begin{align*}
& mpD_{1}+pL'+{ \phi^{(r)}}^{*}A_Y =  mpK_{X^{(r)}/Y}  + \\ 
& \Bigl(\underbrace{- mp\sum_{i=1}^{r}\pr_{i}^{*}(K_{X/Y} +\Delta) }_{\text{with }h_{\varepsilon}}
+mp \underbrace{\sum_{i=1}^{r} \pr_{i}^{*}\Delta}_{\text{with }H_{\Delta}}
 +  mp\underbrace{D_{2}}_{\text{with }h_{D_2}} + \underbrace{pL'}_{\text{with }h_{L'}^{p-1}H_{L'}}+
 \underbrace{{ \phi^{(r)}}^{*}A  }_{\text{with } {\phi^{(r)}}^{*}h_{A}} \Bigr) +{ \phi^{(r)}}^{*}A. 
\end{align*}
By \cite[Theorem 2.10]{Cao19},
it is sufficient to show that there exists 
a singular hermitian metric $h$ on the first term in the right hand side 
with semipositive curvature current such that 
$\mathcal{J}(h^{\frac{1}{mp}} | _{X^{(r)} _y}) = \mathcal{O}_{X^{(r)} _y}$ for a general point $y \in Y$.
We define singular hermitian metrics $h_{\varepsilon}$, $H_{\Delta}$, and $h_{D_2}$
by the following way: 
\begin{itemize}
\item $-mp\sum_{i=1}^{r} \pr_{i}^{*}(K_{X/Y} +\Delta) $ admits a singular hermitian metric $h_{\varepsilon}$ such that 
$\sqrt{-1}\Theta_{  h_{\varepsilon} } \ge - \varepsilon \omega_{X^{(r)}}$ and $h_{\varepsilon}|_{X^{(r)}_y}$ is smooth on $X^{(r)}_y$ for a general point $y\in Y$ 
by the assumption on the restricted base locus. 

\item $\sum_{i=1}^{r} \pr_{i}^{*}\Delta$  admits the singular hermitian metric  
$H_{\Delta} := \prod_{i=1}^{r} \pr_{i}^{*}h_{\Delta}$, 
where the canonical singular hermitian metric $h_{\Delta}$ on the effective divisor $\Delta$. 
Then we have $\mathcal{J}(H_{\Delta}|_{X^{(r)} _y}) = \mathcal{O}_{X^{(r)} _y}$
for a general point $y\in Y$ by the klt condition. 

\item $D_2$ admits the canonical singular hermitian metric $h_{D_2}$. 
Then it follows that $h_{D_2}|_{X^{(r)}_y}$ is smooth on $X^{(r)}_y$ for a general point $y\in Y$.
since $D_{2}$ is supported in $V$. 
\end{itemize}
Then $h= h_{1} H_{\Delta}^{mp}h_{D_2}^{mp} h_{L'}^{p-1}H_{L'}({\phi^{(r)}}^{*}h_{A})$ 
satisfies the desired condition, that is, 
$$
\sqrt{-1}\Theta_{  h } \ge  - \omega_{X^{(r)}}  + \omega_{X^{(r)}} - { \phi^{(r)}}^{*}\omega_{Y}+ \sqrt{-1}\Theta_{  { \phi^{(r)}}^{*}h_{A}} \ge 0$$
and $\mathcal{J}(h^{\frac{1}{mp}} | _{X^{(r)} _y}) = \mathcal{O}_{X^{(r)} _y}$
for a general point $y \in Y$. 
This completes the proof.
\end{proof}

We may regard $ X\setminus Z$ as a subset of $X^{(r)}$ 
with  the diagonal map $j : X\setminus Z \rightarrow X^{(r)}$.
We have $H^{0}(X_y,  pL'+{ \phi^{(r)}}^{*}A_Y |_{X_y}) \neq0$ 
for a general point $y \in Y$ and sufficiently large $p \in \mathbb{Z}_{>0}$, 
and thus we can conclude that $H^{0}(X \setminus Z,  pL'+{ \phi^{(r)}}^{*}A_Y |_{X \setminus Z})\neq 0$
by Claim \ref{caov} and the following diagram:
\begin{equation*}
\xymatrix@C=40pt@R=30pt{
H^{0}(X^{(r)},  mpD_{1}+pL'+{ \phi^{(r)}}^{*}A_Y ) \ar@{->>}[r]
\ar[d]^{j^{*}}& H^{0}(X^{(r)}_{y},  pL'+{ \phi^{(r)}}^{*}A_Y |_{X^{(r)}_{y}})  \ar@{->>}[d]
 \\  
 H^{0}(X \setminus Z,  pL'+{ \phi^{(r)}}^{*}A_Y) |_{X \setminus Z}) 
 \ar[r]^{} &  H^{0}(X_y,  pL'+{ \phi^{(r)}}^{*}A_Y |_{X_y})& 
 }
\end{equation*}
On the other hand, from $\codim Z \ge 2$, 
it follows that 
$$0 \not=
H^{0}(X \setminus Z,  pL'+{ \phi^{(r)}}^{*}A_Y |_{X \setminus Z})=  H^{0}(X,  prL+pE -p \phi^{*}\det(\phi_{*}(L))+ \phi^{*}A_Y ). 
$$
Here $E$ is the $\phi$-exceptional effective divisor of $X$ such that 
$E|_{X \setminus Z}  = E_r |_{X \setminus Z} $. 
We can see that $rL+E - \phi^{*}\det(\phi_{*}(L))$ is pseudo-effective by taking $p \rightarrow +\infty$.

If $\phi_{*}(L)$ is locally free, $E$ can be taken as a zero divisor from $E_r = 0$ in Equation (\ref{locfree}). 
\end{proof}

\subsection{Analytic proof for Corollary~\ref{cor:dominate}} \label{subsection:analytic proof2}

In this subsection, we give an analytic proof for Corollary~\ref{cor:dominate}. 
Let $\phi : X \rightarrow Y$ be a surjective morphism with connected fiber 
between projective manifolds over $\mathbb{C}$. 
Let $\Delta$ be an effective $\mathbb{Q}$-divisor on $X$.

\begin{theo}
If $(X,\Delta)$ is an lc pair, 
then $\mathbb B_+(-(K_{X/Y}+\Delta))$ and $\mathbb B_-(-(K_{X/Y}+\Delta+\phi^*L))$
are dominant over $Y$ for any ample divisor $L$ on $Y$ 
unless $Y$ is one point. 
\end{theo}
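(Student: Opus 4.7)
The plan is to mirror the algebraic argument of Corollary~\ref{cor:pullback} while replacing weak positivity by metric positivity of direct images. I focus on the assertion on $\mathbb B_-$; the assertion on $\mathbb B_+$ will follow by the same trick that produces the $\mathbb B_+\subseteq \mathbb B_-$-type inclusion in the algebraic proof of Corollary~\ref{cor:pullback}~(2), that is, by introducing an auxiliary ample $\mathbb Q$-divisor $H$ on $Y$ so as to reduce to the $\mathbb B_-$ setting.

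Assume $\dim Y\ge 1$ and suppose, for contradiction, that a very general fiber $F$ of $\phi$ avoids $\mathbb B_-(-(K_{X/Y}+\Delta+\phi^{*}L))$. Fix an ample $\mathbb Q$-divisor $A$ on $X$. For each sufficiently small rational $\varepsilon>0$ the definition of $\mathbb B_-$ yields an effective $\mathbb Q$-divisor $D_\varepsilon\sim_{\mathbb Q}-(K_{X/Y}+\Delta+\phi^{*}L)+\varepsilon A$ whose support does not meet $F$; from $D_\varepsilon$ I would build a singular hermitian metric $h_\varepsilon$ of semipositive curvature on $\mathcal O_X(-(K_{X/Y}+\Delta+\phi^{*}L)+\varepsilon A)$ such that, after a small perturbation by an ample factor $h_A^{\delta}$, the restriction to $F$ has trivial multiplier ideal. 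If the lc hypothesis is an obstruction for the direct-image theorems below, a preliminary step would replace $(X,\Delta)$ by the klt pair $(X,(1-\eta)\Delta)$ and let $\eta\to 0$ at the end.

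Next I would invoke the positivity theorem for direct images of relative log canonical bundles twisted by metric-positive line bundles, that is, Proposition~\ref{r-c10} together with its lc extension in~\cite{DM19}. Applied to $h_\varepsilon$, and using the identity
\[
mN(K_{X/Y}+\Delta)\;+\;mN\bigl(-(K_{X/Y}+\Delta+\phi^{*}L)+\varepsilon A\bigr)\;=\;-mN\phi^{*}L+mN\varepsilon A,
\]
the projection formula shows that, for every sufficiently large integer $m$,
\[
\mathcal V_m:=\mathcal O_Y(-mNL)\otimes \phi_{*}\mathcal O_X(mN\varepsilon A)
\]
carries a singular hermitian metric with semipositive curvature; in particular $\det\mathcal V_m$ is pseudo-effective on $Y$. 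A concluding asymptotic computation then runs as follows. The rank $r_m:=\rank \mathcal V_m = h^0(F,\,mN\varepsilon A|_F)$ grows like $(mN\varepsilon)^d (A|_F)^d/d!$ with $d=\dim F$, while Grothendieck--Riemann--Roch shows that $c_1(\det\phi_{*}\mathcal O_X(mN\varepsilon A))$ grows like $(mN\varepsilon)^{d+1}\phi_{*}(A^{d+1})/(d+1)!$. Hence the leading $m^{d+1}$ coefficient of $c_1(\det\mathcal V_m)$ is a positive multiple of
\[
\tfrac{\varepsilon}{d+1}\,\phi_{*}(A^{d+1})\;-\;(A|_F)^{d}\,c_1(L),
\]
which for $\varepsilon$ small enough becomes an anti-ample class on the positive-dimensional $Y$, contradicting the pseudo-effectivity of $\det\mathcal V_m$.

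I expect the principal obstacle to lie in the second step: certifying that the direct-image theorem applies in the lc setting with the prescribed singular twist, which comes down to checking that the perturbed metric $h_\varepsilon|_F\cdot h_A^{\delta}|_F$ has trivial multiplier ideal (so that $\mathcal V_m$ is torsion-free, nonzero, and inherits the required semipositivity). This is precisely where the lc version of the Berndtsson--P\u{a}un / P\u{a}un--Takayama / Hacon--Popa--Schnell theory as in~\cite{DM19} and the Ohsawa--Takegoshi $L^{2}$-extension technique enter decisively.
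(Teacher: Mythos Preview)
Your strategy is in the right analytic spirit but takes a detour that the paper avoids, and it carries a few technical imprecisions.

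\textbf{Structural difference.} The paper pushes forward a \emph{fixed} ample divisor. For the $\mathbb B_-$-statement it writes
\[
2A \;=\; mNK_{X/Y}+\bigl(-mN(K_{X/Y}+\Delta+\phi^*L)+A\bigr)+mN\phi^*L+(N\Delta+A)+mN(1-1/m)\Delta
\]
and equips each bracket with an obvious metric. The $mN\phi^*L$ term contributes curvature $mN\phi^*\sqrt{-1}\Theta_{h_L}$, so the induced metric $H_m$ on $\phi_*(2A)$ satisfies $\sqrt{-1}\Theta_{H_m}\succeq mN\sqrt{-1}\Theta_{h_L}\otimes\id$. Hence $\det\phi_*(2A)-mNL$ is pseudo-effective for \emph{every} $m$, an immediate contradiction on a positive-dimensional $Y$. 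Your scheme instead pushes forward the $m$-dependent divisor $mN\varepsilon A$ and then analyzes the leading $m^{d+1}$ coefficient of $c_1(\det\mathcal V_m)$ via Grothendieck--Riemann--Roch. This can be made to work, but the asymptotic bookkeeping is entirely avoidable: keeping the pushed-forward divisor fixed while letting only the metric vary with $m$ is exactly what makes the paper's argument a two-line contradiction.

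\textbf{Technical points.} First, from $F\cap\mathbb B_-(-(K_{X/Y}+\Delta+\phi^*L))=\emptyset$ you cannot extract a single effective $D_\varepsilon$ whose \emph{support} misses $F$; what you get is that the base locus of some multiple misses $F$, so the metric built from a basis of sections is smooth along $F$. Second, you invoke Proposition~\ref{r-c10} for your twist, but that proposition is stated for klt pairs with a $\phi$-big twist; your twist $mN(-(K_{X/Y}+\Delta+\phi^*L)+\varepsilon A)$ need not be $\phi$-big. What you actually need is the underlying P\u{a}un--Takayama positivity of $\phi_*(mK_{X/Y}+L)$ for $L$ with semipositive metric and $\mathcal J(h^{1/m}|_{X_y})=\mathcal O_{X_y}$; the reference to \cite{DM19} is an algebraic weak-positivity statement and does not supply the metric conclusion you want. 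Third, your klt reduction $(X,(1-\eta)\Delta)$ is valid (since $\mathbb B_-$ only shrinks upon adding the effective $\eta\Delta$, and the conclusion ``$Y$ is a point'' is $\eta$-independent, so no limit is needed), but the paper handles lc more gracefully by splitting $mN\Delta=(N\Delta)+(m-1)N\Delta$: the first piece is absorbed into an ample term with smooth metric, and the $mN$-th root of the canonical metric on the second piece is $h_\Delta^{1-1/m}$, whose multiplier ideal is trivial precisely by the lc hypothesis.
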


\begin{proof}

Let $\omega_{X}$ be a K\"ahler form of $X$ and 
$\omega_{Y}$ be a K\"ahler form of $Y$ such that $\omega_{X} \ge \phi ^{*}\omega_{Y}$.

We assume that $\mathbb{B}_{+}\left( -(K_{X/Y} +\Delta) \right)$ is not dominant over $Y$. 
Take $N \in \mathbb{Z}_{>0}$ such that $N \Delta$ is Cartier and 
an ample divisor $A$ of $X$ such that $N\Delta + A$ is ample.
Fix $m \in \mathbb{Z}_{>0}$ such that $\mathbb{B}_{+}( - (K_{X/Y} + \Delta)) = \mathbb{B}\left( -mN(K_{X/Y} +\Delta)-A \right) $.
Then we consider 
$$
\mathcal{O}_{Y}=\phi _{*}(mNK_{X/Y} +
\overbrace{(-mN(K_{X/Y} +\Delta)-A)}^{\text{with $h_1$}} +
\overbrace{(N\Delta +A)}^{\text{with $h_2$}} +\overbrace{mN(1-1/m)\Delta)}^{\text{with $h_3^{mN}$}} 
$$
and singular hermitian metrics defined as follows:   
\begin{itemize}
\item $h_1$ is a singular hermitian metric on $-mN(K_{X/Y} + \Delta) -A $ 
such that $h_1|_{X_y}$ is smooth for a general point $y \in Y$.
\item $h_2$ is a smooth hermitian metric on the ample divisor $N\Delta+A$ 
with  $\sqrt{-1}\Theta_{h_2} \ge \varepsilon \omega_{X}$ for some $\varepsilon >0$.
\item $h_3$ is the canonical singular metric on $(1-1/m)\Delta$. 
\end{itemize}
Then $h := h_{1}h_{2}h_{3}^{mN}$ satisfies that 
$\sqrt{-1}\Theta_{h} \ge  \varepsilon \omega_{X} \ge \varepsilon \phi^{*}\omega_{Y} $
and $\mathcal{J}(h^{\frac{1}{mN}} |_{X_y}) = \mathcal{O}_{X_y}$ for a general point $y \in Y$.
By \cite[Theorem 2.2(1)]{CCM}, 
the induced metric $H$ on $\mathcal{O}_{Y}$ satisfies that 
$\sqrt{-1}\Theta_{H} \ge \varepsilon \omega_{Y} $, 
which is a contradiction.

Now we assume that $\mathbb B_-(-(K_{X/Y}+\Delta+\phi^*L))  $ is not dominant over $Y$.
Let $h_{L}$ be a smooth hermitian metric on $L$ with positive curvature.
By the same way as in the above argument, 
we can see that the direct image of  
$$ 2A=mNK_{X/Y} 
 +(-mN(K_{X/Y} +\Delta + \phi^*L) +A ) +mN\phi^*L+(N\Delta +A) +mN(1-1/m)\Delta$$
admits a singular hermitian metric $H_{m}$ such that 
$\sqrt{-1}\Theta_{H_m} \succeq mN\sqrt{-1}\Theta_{h_{L}} \otimes \id$. 
This implies that $\det(\phi_{*}(2A)) - mNL$ is pseudo-effective for any $m \in \mathbb{Z}_{>0}$, which is a contradiction.
\end{proof}
\bibliographystyle{alpha}

\end{document}